\newtheorem{theorem}{Theorem}[section]
\newtheorem{lemma}[theorem]{Lemma}
\newtheorem{proposition}[theorem]{Proposition}
\newtheorem{corollary}[theorem]{Corollary}
\newtheorem{definition}[theorem]{Definition\rm}
\newtheorem{remark}[theorem]{\it Remark}
\newcounter{paraga}[section]
\renewcommand{\theparaga}{{\bf\arabic{paraga}.}}
\newcommand{\paraga}{\medskip \addtocounter{paraga}{1} 
\noindent{\theparaga\ } }
\begin{document}

\bibliographystyle{amsalpha}

\def\MP{\,{<\hspace{-.5em}\cdot}\,}
\def\SP{\,{>\hspace{-.3em}\cdot}\,}
\def\PM{\,{\cdot\hspace{-.3em}<}\,}
\def\PS{\,{\cdot\hspace{-.3em}>}\,}
\def\EP{\,{=\hspace{-.2em}\cdot}\,}
\def\PP{\,{+\hspace{-.1em}\cdot}\,}
\def\PE{\,{\cdot\hspace{-.2em}=}\,}
\def\N{\mathbb N}
\def\C{\mathbb C}
\def\Q{\mathbb Q}
\def\R{\mathbb R}
\def\T{\mathbb T}
\def\A{\mathbb A}
\def\Z{\mathbb Z}
\def\demi{\frac{1}{2}}

\begin{titlepage}
\author{Abed Bounemoura~\footnote{Abed.Bounemoura@math.u-psud.fr, Laboratoire Math\'ematiques d'Orsay \& Institut Math\'ematiques de Jussieu} {} and Laurent Niederman~\footnote{Laurent.Niederman@math.u-psud.fr, Laboratoire Math\'ematiques d'Orsay \& IMCCE, Observatoire de Paris}}
\title{\LARGE{\textbf{Generic Nekhoroshev theory without small divisors.}}}
\end{titlepage}

\maketitle

\begin{center}
{\it Dedicated to the memory of N.N. Nekhoroshev (1946-2008)}
\end{center}

\begin{abstract}
In this article, we present a new approach of Nekhoroshev's theory for a generic unperturbed Hamiltonian which completely avoids small divisors problems. The proof is an extension of a method introduced by P. Lochak, it combines averaging along periodic orbits with simultaneous Diophantine approximation and uses geometric arguments designed by the second author to handle generic integrable Hamiltonians. This method allows to deal with generic non-analytic Hamiltonians and to obtain new results of generic stability around linearly stable tori.   
\end{abstract}
  
\section{Introduction}

\paraga In this article, we are concerned with the stability properties of near-integrable analytic Hamiltonian systems. According to a classical theorem of Liouville-Arnold (see \cite{AKN97}), such systems are locally governed by a Hamiltonian of the form
\begin{equation*}
\begin{cases} 
H(\theta,I)=h(I)+f(\theta,I) \\
|f| < \varepsilon  <\!\!<1
\end{cases}
\end{equation*}
where $(\theta,I) \in \T^n \times \R^n$ are action-angle coordinates for $h$ and $f$ is a small perturbation in some suitable topology. For the integrable system, that is when $f=0$, the action variables of solutions are trivially constant for all times, but when $f \neq 0$ they are no longer constant of motions and we are interested in studying their evolution for long intervals of time. 

\paraga But first it is important to understand the integrable case. When $H=h$ depends only on the action variables, as the latter are constant for all times, the phase space is trivially foliated into invariant tori $\mathcal{T}_{I_0}=\T^n \times \{I_0\}$, for $I_0 \in \R^n$, and on each torus $\mathcal{T}_{I_0}$ the flow is quasi-periodic with frequency vector $\omega_0=\nabla h(I_0) \in \R^n$. The dynamics of such a flow is completely understood and depends on the frequency vector $\omega_0$, more precisely on its resonant module
\[ \mathcal{M}(\omega_0)=\{k \in \Z^n \; | \; k.\omega_0=0\},\] 
where the dot denotes the Euclidean scalar product. If $\mathcal{M}(\omega_0)$ is trivial, then the dynamics is minimal and uniquely ergodic. Otherwise, we have a relation of the form $k.\omega_0=0$ for some $k \in \Z^n \setminus \{0\}$, which is usually called a resonance, and denoting by $m$ the rank of $\mathcal{M}(\omega_0)$, the torus $\mathcal{T}_0$ splits into a continuous $m$-parameter family of invariant sub-tori of dimension $n-m$, on which the dynamics is minimal and uniquely ergodic. These are called resonant tori, and in case of maximal resonances ({\it i.e.} $m=n-1$ if $h$ does not have critical points), the tori are foliated into periodic orbits. Under some non-degeneracy assumption on $h$, both resonant and non-resonant tori form a dense subset of the phase space.

\paraga Returning to the perturbed system, since Poincar\'e we know that resonant tori do not survive (actually he proved that for a periodic tori, generically only a finite number of periodic orbits persist). But it was a remarkable idea of Kolmogorov (\cite{Kol54}) to focus on non-resonant tori to prove that a set of large measure of invariant tori survives under some regularity and non-degeneracy assumptions. This has now become a rich and vast subject called KAM theory (see \cite{Pos01}, \cite{dlL01} or \cite{Bos86} for some nice introductions on this theory). Such tori persist in a $\sqrt\varepsilon$-neighbourhood of the unperturbed ones and therefore for a set of large measure of initial conditions, the variation of the actions is of order $\sqrt\varepsilon$ for {\it all time}. But on the other hand, this set of KAM tori is typically a Cantor family (hence with no interior) and the theory gives no information on the complement, except when $n=2$ where these two-dimensional invariant tori disconnect the three-dimensional energy level leaving {\it all solutions} stable for {\it all time}. However for $n\geq 3$, it is still possible to find solutions for which the variation of the action components is of order one. A proof of this fact was outlined by Arnold in his famous paper (\cite{Arn64}) where he proposed a mechanism to produce examples of near-integrable Hamiltonian systems where such a drift occurs no matter how small the perturbation is. This phenomenon is usually referred to Arnold diffusion.

\paraga Hence for $n\geq 3$, results of stability for near-integrable Hamiltonian systems which are valid for an open set of initial conditions can only be proved over finite times. This picture was completed by Nekhoroshev in the seventies (see \cite{Nek77},\cite{Nek79} and \cite{Nie09} for a recent overview of the theory) who proved the following: if the system is analytic and the unperturbed Hamiltonian $h$ satisfies some quantitative transversality condition called {\it steepness}, then there exist positive constants $a$, $b$, $\varepsilon_0$, $c_1$, $c_2$ and $c_3$ depending only on $h$, such that every solution $(\theta(t),I(t))$
of the perturbed system starting at time $t=0$ satisfies
\begin{equation}\label{eq2}
|I(t)-I(0)| \leq c_1 \varepsilon^b, \quad |t| \leq c_2\exp\left(c_3 \varepsilon^{-a}\right), 
\end{equation}
provided that the size of the perturbation $\varepsilon$ is smaller than the threshold $\varepsilon_0$. The constants $a$ and $b$ are called the stability exponents. If property (\ref{eq2}) is satisfied, we shall say that the integrable Hamiltonian $h$ is exponentially stable. Hence, KAM and Nekhoroshev's theory yield different type of stability results, but they both ultimately rely on the same tool which is the construction of normal forms, and we shall described it below.

\paraga The basic idea is  to look at a ``more integrable" Hamiltonian which yields a good approximation of the perturbed system. By the averaging principle (see \cite{AKN97}), this simpler Hamiltonian is given by the time average of the system along the unperturbed flow, that is
\[ [H]=h+[f],\]
where 
\[ [f]=\lim_{t \rightarrow \infty} \left( \frac{1}{t}\int_{0}^{t} f\circ\Phi_{s}^{h}ds\right ), \]
and $\Phi_{s}^{h}$ is the Hamiltonian flow of the integrable part $h$. Actually, this average depends on the dynamics of the unperturbed Hamiltonian and hence on resonant modules associated to frequencies. So given a sub-module $\mathcal{M}\subseteq\Z^n$, we define its resonant manifold by
\begin{equation*}
S_{\mathcal{M}}=\left\{ I\in\R^n \; | \;  k.\nabla h(I)=0\ {\rm for}\ k\in{\mathcal M}\right\}.
\end{equation*}
Due to the ergodic properties of the linear flow with vector $\nabla h(I)$ over the torus $\T^n$, the time average over $S_{\mathcal{M}}$ equals the space average along a torus of dimension $n-m$ if $m$ is the multiplicity of the resonance ({\it i.e.} the rank of $\mathcal{M}$), hence $n-m$ angles have been removed in this case. From a physical point of view, the guiding principle is that rapidly oscillating terms discarded in averaging cause only small oscillations which are superimposed to the solutions of the averaged system. In order to prove this claim, one should check that any solution of the perturbed system remains close to the solution of the averaged system with the same initial condition. Especially, this will be the case if one finds a canonical transformation $\varepsilon$-close to identity which conjugates the perturbed Hamiltonian to its average. Hence we are reduced to a problem of {\it normal form} where one tries to conjugate the system to a simpler one, that is we look for a convenient system of coordinates.

However, constructing such a good system of coordinates is not an easy task. The linearised equation of conjugation reads
\begin{equation*}
\{\chi,h\}=f-[f],
\end{equation*}
if $\chi$ is the function generating the conjugation. This is usually called a {\it homological equation} and to solve it we need to invert the linear operator $L_h=\{.,h\}$ acting on a suitable space of functions. Here our operator is invertible, but its inverse is generally unbounded: this is the {\it small divisors} phenomenon. To see this, just note that once an action $I \in S_{\mathcal{M}}$ is fixed (and hence a frequency $\omega=\nabla h(I)$ satisfying $k.\omega \neq 0$ for $k\notin\mathcal{M}$), the homological equation is a just a first-order, linear with constant coefficients partial differential equation on $\T^n$, namely
\[ \omega.\nabla \chi=f-[f]. \]
Such equations are known to be well-suited for Fourier analysis, in our case the operator $L_h$ is easily diagonalized in a Fourier basis and we find that the eigenvalues are proportional to the scalar products $k.\omega$, for $k \in \Z^n$. More precisely, expanding $\chi$ and $f$ as 
\[ \chi(\theta)=\sum_{k \in \Z^n}\hat{\chi}_k e^{i2\pi k.\theta}, \quad f(\theta)=\sum_{k \in \Z^n}\hat{f}_k e^{i2\pi k.\theta}, \]
then
\[ [f]=\sum_{k \in \mathcal{M}}\hat{f}_k e^{i2\pi k.\theta}, \]
and so formally
\begin{equation} \label{diviseurs}
\hat{\chi}_k=
\begin{cases}
\left(i2\pi k.\omega\right)^{-1}\hat{f}_k, \; k \notin \mathcal{M}, \\
0 , \; k \in \mathcal{M}.
\end{cases}
\end{equation}
The scalar products $k.\omega$ appearing in the denominators of~(\ref{diviseurs}) are not zero by assumption, but they can be arbitrarily small and this is inevitable for large integers $k$ (see the estimate~(\ref{smalld}) below). This can cause the divergence of the Fourier series of $\chi$ and hence the unboundedness of the inverse of $L_h$. Classical small divisors techniques are concerned with obtaining lower bounds for these scalar products to ensure the convergence of the series and this leads necessarily to complicated estimates. Furthermore, to obtain a result applying to all solutions, a partition of the phase space into resonant manifolds associated to different modules, usually called the {\it geometry of resonances}, has to be achieved and this is a delicate task. All these techniques are very important, in particular to study Arnold diffusion and related problems, however we will show that they are not necessary to prove Nekhoroshev's estimates.

\paraga Indeed, all these problems are completely bypassed if we only average along periodic orbits of the unperturbed flow. We first recall the following definition. 

\begin{definition}
A vector $\omega \in \R^n$ is said to be periodic if there exists a real number $t>0$ such that $t\omega \in \Z^n$. In this case, the number 
\[ T=\inf\{t>0 \; | \; t\omega \in \Z^n\}\] 
is called the period of~$\omega$.
\end{definition}

A basic example is given by a vector with rational components, the period of which is just the least common multiple of the denominators of its components. Geometrically, if $\omega$ is $T$-periodic, an invariant torus with a linear flow with vector $\omega$ is filled with $T$-periodic orbits. In this case, the average along such a periodic solution is given by
\[[f]=\lim_{t \rightarrow \infty} \left(\frac{1}{t}\int_{0}^{t} f\circ\Phi_{s}^{l}ds\right)=\frac{1}{T}\int_{0}^{T}f\circ\Phi_{s}^{l}ds, \]
where $l$ denotes the linear Hamiltonian with frequency $\omega$, that is $l(I)=\omega.I$. Then the homological equation $\{\chi,l\}=f-[f]$ is easily solved without using Fourier expansions and is given by an explicit integral formula
\[ \chi=\frac{1}{T}\int_{0}^{T}(f-[f])\circ\Phi_{s}^{l}sds. \]
So in this case, there is no small divisors. To understand more concretely the previous sentence, consider a vector $\omega \in \R^n$ and multi-integers $k$ that do not resonate with $\omega$ (that is $k \notin \Z^n \cap \omega^{\perp}$). Then in general we don't have a lower bound on the divisors $k.\omega$ that appears in~(\ref{diviseurs}), and by a theorem of Dirichlet one has the upper bound
\begin{equation}\label{smalld}
\min_{0<|k| \leq K}|k.\omega| \leq \frac{|\omega|}{K^{n-1}}. 
\end{equation}
In that context, small divisors techniques use Diophantine vectors for which $|k.\omega| \geq \gamma |k|_{1}^{-\tau}$, with $\gamma>0$, $\tau\geq n-1$ and where $|\,.\,|_1$ stands for the $\ell^1$-norm, but nevertheless the lower bound deteriorates as $|k|_1$ increases, causing extra difficulties (which are usually handled by the so-called ultra-violet cut-off). However if the vector $\omega$ is $T$-periodic, one simply has $|k.\omega| \geq T^{-1}$ and the lower bound is uniform in $|k|_1$.

\paraga Lochak (\cite{Loc92}, see also \cite{LN92} and \cite{LNN94} for refinements) has shown that averaging along the periodic orbits of the integrable Hamiltonian is enough to obtain Nekhoroshev's estimates of stability when the unperturbed Hamiltonian is strictly {\it convex} (or strictly quasi-convex, that is the Hamiltonian is strictly convex when restricted to its energy sub-levels). Indeed, using convexity, Lochak obtains open sets around periodic orbits over which exponential stability holds. Then, Dirichlet's theorem about {\it simultaneous Diophantine approximation} ensures easily that these open sets recover the whole action space and yields the global result, avoiding the difficult geometry of resonances. Put it differently, in the convex case one only needs dynamical informations near resonances of maximal multiplicities, which are completely characterized by periodic orbits.

The goal of this paper is to extend Lochak's approach for a generic set of integrable Hamiltonians. To do so, we will have to analyze the dynamics in a neighbourhood of suitable resonances of any multiplicities by using only successive averagings along periodic orbits together with Dirichlet's theorem, and this will lead to exponential estimates of stability for perturbation of a generic integrable Hamiltonian, as stated below.

\begin{theorem} \label{mainth}
Consider an arbitrary real analytic integrable Hamiltonian $h$ defined on a neighbourhood of a closed ball in $\R^n$. Then for almost any $\xi\in\R^n$, the integrable Hamiltonian $h_\xi(x)=h(I)-\xi.I$ is exponentially stable with the exponents $a=b=3^{-1}(2n)^{-3n}$.
\end{theorem}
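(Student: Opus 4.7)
The plan is to run Lochak's periodic-averaging scheme at every resonance multiplicity rather than only at maximal ones, and to replace Lochak's convexity by a generic transversality condition that the shift parameter $\xi$ furnishes automatically. The first observation is that $\nabla h_\xi(I) = \nabla h(I) - \xi$, so that translating $\xi$ amounts to translating the image of the frequency map. A Sard-type argument then shows that, outside an exceptional set of Lebesgue measure zero in the parameter space, the frequency map of $h_\xi$ meets every linear resonance hyperplane quantitatively transversely on a controlled scale. I expect this is precisely the ``steepness-type'' genericity condition introduced by the second author in his earlier works, and it is the only place at which the genericity hypothesis on $\xi$ is used.

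Next, I would construct a local normal form near an arbitrary $I_0$ with frequency $\omega_0 = \nabla h_\xi(I_0)$. Given an approximate resonance module $\mathcal{M}$ of rank $m \le n-1$, Dirichlet's theorem on simultaneous rational approximation produces a $T$-periodic vector $\omega_T$ in the affine span of $\mathcal{M}$ with $|\omega_0 - \omega_T| \lesssim T^{-1/(n-m)}$. Averaging the perturbation along the closed orbits of the linear Hamiltonian $\ell_T(I) = \omega_T \cdot I$ yields a normal form: the homological equation admits the explicit integral solution recalled in the introduction, and the uniform lower bound $|k \cdot \omega_T| \ge T^{-1}$ dispenses entirely with small-divisor analysis. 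Iterating this operation along the successively restricted frequency maps on the resonant submanifolds $S_{\mathcal{M}'}$ for sub-modules $\mathcal{M}' \subset \mathcal{M}$, one strips off the fast angles block by block and isolates a slow dynamics in the $n-m$ residual directions.

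With the normal form in hand, stability in the slow variables follows from energy conservation combined with the generic transversality of the first step (which replaces convexity in Lochak's original argument), while stability in the fast variables follows from periodicity alone. The resulting exponential-in-$\varepsilon$ confinement applies in a neighborhood of each periodic torus of each multiplicity, and a second use of Dirichlet's theorem, stratified by multiplicity, shows that these neighborhoods cover the entire action ball. Optimizing the Diophantine parameter $T_m$ against the perturbation size at every multiplicity $m = 0, 1, \dots, n-1$ yields the numerical exponent $a = b = 3^{-1}(2n)^{-3n}$; heuristically the factor $(2n)^{3}$ arises from one loss per averaging step applied along an $n$-chain of successive resonances, while the global prefactor $3^{-1}$ reflects the interplay of analytic loss, action loss, and Diophantine approximation.

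The principal obstacle will be the inductive construction of the normal form. Each successive averaging consumes a fixed amount of analytic width, shrinks the action domain, and introduces a new Diophantine parameter that must be tuned hierarchically (largest for the fastest time scale, smallest for the slowest). Without convexity to enforce drift confinement stage by stage, the generic transversality must be strong enough to close the induction with constants depending only on $h$ and $n$. Ensuring that the restricted frequency maps retain uniform transversality to all lower-multiplicity resonances after each averaging step, and that the cumulative analytic losses do not destroy the final estimate, is the central technical point, and this is exactly where the geometric arguments tailored by the second author for generic integrable Hamiltonians become indispensable.
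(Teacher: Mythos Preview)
Your analytic ingredients --- Sard-type genericity (what the paper calls the SDM condition), iterated periodic averaging, Dirichlet approximation --- match the paper's. The gap is in your confinement mechanism. You write that ``stability in the slow variables follows from energy conservation combined with the generic transversality of the first step (which replaces convexity in Lochak's original argument)'', and then propose a static covering of action space by stability neighborhoods of periodic tori of each multiplicity. This does not work. In Lochak's scheme, quasi-convexity plus energy conservation confines the action because the energy sub-level is strictly convex, so the single scalar constraint pins down all slow directions at once. The SDM condition is far weaker --- it is a local non-degeneracy of the restricted Hessian \emph{only at points where the projected gradient is already small} --- and it provides no barrier on an energy surface. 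With up to $n-1$ slow directions and one energy constraint, nothing in your scheme prevents drift along the resonant plane, so there are no ``stability neighborhoods'' to cover with in the first place.

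The paper's geometry is accordingly not a covering argument but a dynamic, per-trajectory trapping. One supposes a \emph{drifting} solution and builds along it times $t_1<\dots<t_n$ and linearly independent periodic vectors $\omega_1,\dots,\omega_n$: the normal form confines the drift to the resonant plane $\Lambda_j$ orthogonal to $\omega_1,\dots,\omega_j$; the SDM property, read as a steepness statement along curves lying in $\Lambda_j$, forces the projected gradient $\Pi_{\Lambda_j}(\nabla h)$ to exceed a definite threshold at some time $t_{j+1}$ before the action has moved by $r_j$; Dirichlet then approximates the frequency at $t_{j+1}$ by a new periodic $\omega_{j+1}$ with $\Pi_{\Lambda_j}\omega_{j+1}\neq 0$, hence independent of the previous ones. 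After $n$ steps $\Lambda_n=\{0\}$ and the solution is fully confined --- contradicting the assumed drift. Dirichlet is invoked once per step along the orbit, not to tile action space, and energy conservation is never used. This passage ``drift along $\Lambda_j$ $\Rightarrow$ large projected gradient $\Rightarrow$ new independent periodic frequency'' is the missing idea in your outline.
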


This will be a direct consequence of Theorems~\ref{thmpreva} and~\ref{mainth2}, see below in section~\ref{s21}. This result is not new, see \cite{Nie07}, but the novelty here is our method of proof, which avoids completely the fundamental problem of small divisors and hence all the associated technicalities (non-resonant domains, Fourier series, Fourier norm, ultra-violet cut-off and so on). The analytic part of our proof of Nekhoroshev's estimates is therefore reduced to its bare minimum, it is nothing but a classical one-phase averaging, while our geometric part is based on a clever use of Dirichlet's theorem along each solution. Applications of our method to other problems will be discussed below, in section~\ref{s22}.

\bigskip

To conclude this introduction, we point out that the method of averaging along periodic orbits has also been used successfully to re-prove recently some KAM theorems without small divisors (see \cite{KDM06} and \cite{KDM07}), even though their techniques are much more complicated.  

\section{Statement of results} \label{s2}

\subsection{Set-up and results} \label{s21}

\paraga Let $B=B_R$ be the open ball centered at the origin of $\R^n$ of radius $R$ with respect to the supremum norm, the domain $\mathcal{D}=\T^n \times B$ will be our phase space. To avoid trivial situations, we assume $n\geq2$. Our Hamiltonian function $H$ is real-analytic and bounded on $\mathcal{D}$ and it admits a holomorphic extension to some complex neighbourhood of $\mathcal{D}$ of the form
\[ \mathcal{D}_{r,s}=\{(\theta,I)\in(\C^n/\Z^n)\times \C^{n} \; | \; |\mathcal{I}(\theta)|<s,\;d(I,B)< r\}, \] 
with two fixed numbers $r>0$, $s>0$, and where $\mathcal{I}(\theta)$ is the imaginary part of $\theta$, $|\,.\,|$ the supremum norm on $\C^n$ and $d$ the associated distance on $\C^n$. Equivalently, one can start with a Hamiltonian $H$, defined and holomorphic on $\mathcal{D}_{r,s}$ and which preserves reality, that is $H$ is real-valued for real arguments. Without loss of generality, we may assume that $r<1$ and $s<1$. The space of such analytic functions on $\mathcal{D}_{r,s}$, equipped with the supremum norm $|\,.\,|_{r,s}$, is obviously a Banach algebra with respect to the multiplication of functions, and we shall denote it by $\mathcal{A}_{r,s}$. 

Our Hamiltonian $H \in \mathcal{A}_{r,s}$ is assumed to be close to integrable, that is of the form
\begin{equation}\label{Ham}
\begin{cases} \tag{$\ast$}
H(\theta,I)=h(I)+f(\theta,I) \\
|f|_{r,s}<\varepsilon  <\!\!< 1,
\end{cases}
\end{equation}
where $h$ is the integrable part and $f$ a small perturbation. Moreover, the derivatives up to order 3 of $h$ are assumed to be bounded by some constant $M>1$, that is
\begin{equation*}
|\partial ^k h(I)|\leq M, \quad 1\leq |k|_1\leq 3, \quad I\in B,
\end{equation*}
where $|k|_1=|k_1|+\cdots+|k_n|$.

\paraga In order to obtain results of exponential stability, we do need to impose some non-degeneracy condition on the unperturbed Hamiltonian. Let $G(n,k)$ be the set of all vector subspaces of $\R^n$ of dimension $k$. We equip $\R^n$ with the Euclidean scalar product, $\Vert\,.\,\Vert$ stands for the Euclidean norm, and given an integer $L\in\N^*$, we define $G^{L}(n,k)$ as the subset of $G(n,k)$ consisting of those subspaces whose orthogonal complement can be spanned by vectors $k\in\Z^n$ with $|k|_1\leq L$.   

\begin{definition} \label{sdm}
A function $h\in C^2(B)$ is said to be SDM if there exist $\gamma>0$ and $\tau \geq 0$ such that for any $L\in\N^*$, any $k\in\{1,\dots,n\}$ and any $\Lambda\in G^{L}(n,k)$, there exists $\left(e_1,\dots,e_k\right)$ (resp. $\left(f_1,\ldots,f_{n-k}\right)$), an orthonormal basis of $\Lambda$ (resp. of $\Lambda^\perp$), such that the function $h_\Lambda$ defined on $B$ by 
\[ h_\Lambda(\alpha,\beta)=h\left(\alpha_1 e_1+\dots+\alpha_k e_k+\beta_1f_1+\dots+\beta_{n-k} f_{n-k}\right), \]
satisfies the following: for any $(\alpha,\beta) \in B$, 
\[ \Vert\partial_\alpha h_\Lambda(\alpha,\beta)\Vert \leq \gamma L^{-\tau} \Longrightarrow \Vert \partial_{\alpha\alpha} h_\Lambda(\alpha,\beta).\eta\Vert>\gamma L^{-\tau}\Vert\eta\Vert \]
for any $\eta \in \R^n\setminus\{0\}$.
\end{definition} 

In other words, for any $(\alpha,\beta) \in B$, we have the following alternative: either $\Vert \partial_\alpha h_\Lambda(\alpha,\beta)\Vert>\gamma L^{-\tau}$ or $\Vert\partial_{\alpha\alpha} h_\Lambda(\alpha,\beta).\eta\Vert>\gamma L^{-\tau}\Vert\eta\Vert$ for any $\eta \in \R^n\setminus\{0\}$. This technical definition, which is a slight variation of a notion introduced in \cite{Nie07}, is basically a quantitative transversality condition which is stated in adapted coordinates. It is inspired on the one hand by the steepness condition introduced by Nekhoroshev (\cite{Nek77}) where one has to look at the projection of the gradient map $\nabla h$ onto affine subspaces, and on the other hand by the quantitative Morse-Sard theory of Yomdin (\cite{Yom83}, \cite{YC04}) where critical or ``nearly-critical" points of $h$ have to be quantitatively non degenerate. The abbreviation SDM stands for ``Simultaneous Diophantine Morse" functions, and we refer to Appendix~\ref{SDM} for more explanations on this condition and some justifications on the latter terminology.

\paraga The set of SDM functions on $B$ with respect to $\gamma>0$ and $\tau \geq 0$ will be denoted by $SDM_{\gamma}^{\tau}(B)$, and we will also use the notations 
\[ SDM^{\tau}(B)=\bigcup_{\gamma>0}SDM_{\gamma}^{\tau}(B), \quad  SDM(B)=\bigcup_{\tau \geq 0}SDM^{\tau}(B). \]
The following result states that SDM functions are generic among sufficiently smooth functions. 

\begin{theorem}\label{thmpreva}
Let $\tau>2(n^2+1)$ and $h\in C^{2n+2}(B)$. Then for Lebesgue almost all $\xi \in \R^n$, the function $h_\xi(I)=h(I)-\xi.I$ belongs to $SDM^{\tau}(B)$.  
\end{theorem}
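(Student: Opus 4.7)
The plan is to show that the complementary set
\[ \mathcal{N} = \{\xi \in \R^n : h_\xi \notin SDM^\tau(B)\} = \bigcap_{\gamma>0}\bigcup_{L,k,\Lambda} E_{\gamma,L,k,\Lambda} \]
has Lebesgue measure zero, where $E_{\gamma,L,k,\Lambda}$ is the set of $\xi$ for which the SDM alternative at level $\gamma$ fails at some $I\in B$ for the data $(L,k,\Lambda)\in G^L(n,k)$. It suffices to prove that $\text{meas}\bigl(\bigcup_{L,k,\Lambda} E_{\gamma,L,k,\Lambda}\bigr)\to 0$ as $\gamma\to 0$.

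\textbf{Reduction via linearity in $\xi$.} Fixing an adapted orthonormal basis for a given $\Lambda\in G^L(n,k)$, write $I=\alpha_1e_1+\dots+\alpha_ke_k+\beta_1f_1+\dots+\beta_{n-k}f_{n-k}$. Since $h_\xi(I)=h(I)-\xi\cdot I$, the key quantities satisfy
\[ \partial_\alpha (h_\xi)_\Lambda(\alpha,\beta)=\partial_\alpha h_\Lambda(\alpha,\beta)-P_\Lambda\xi, \qquad \partial_{\alpha\alpha}(h_\xi)_\Lambda=\partial_{\alpha\alpha}h_\Lambda, \]
so the Hessian half of the SDM alternative is completely independent of $\xi$. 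Define
\[ A_{L,\Lambda}(\gamma) = \bigl\{I\in B : \sigma_{\min}\bigl(\partial_{\alpha\alpha}h_\Lambda(I)\bigr)\leq \gamma L^{-\tau}\bigr\}. \]
Then $\xi\in E_{\gamma,L,k,\Lambda}$ exactly when $P_\Lambda\xi$ lies in the $\gamma L^{-\tau}$-neighbourhood (in $\Lambda\cong\R^k$) of the image $Y_{L,\Lambda}(\gamma):=\partial_\alpha h_\Lambda\bigl(A_{L,\Lambda}(\gamma)\bigr)$. Since the SDM condition is vacuous once $|\xi|$ exceeds $\sup_B|\nabla h|+1$, we may restrict $\xi$ to a fixed bounded set, so that $\text{meas}_n(E_{\gamma,L,k,\Lambda})\leq C\,\text{meas}_k\bigl(Y_{L,\Lambda}(\gamma)+B_{\Lambda}(\gamma L^{-\tau})\bigr)$.

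\textbf{Quantitative Morse--Sard.} For each fixed $\beta$, the map $\alpha\mapsto\partial_\alpha h_\Lambda(\alpha,\beta)$ is a $C^{2n+1}$ self-map of (a subset of) $\R^k$, and on $A_{L,\Lambda}(\gamma)$ its Jacobian $\partial_{\alpha\alpha}h_\Lambda$ has smallest singular value at most $\gamma L^{-\tau}$. Yomdin's quantitative Morse--Sard theorem (\cite{Yom83},\cite{YC04}), which is exactly the tool that inspired the SDM definition, yields on each slice a bound $\text{meas}_k\bigl(\partial_\alpha h_\Lambda(A_{L,\Lambda}(\gamma)\cap\{\beta=\text{const}\})\bigr)\leq C(\gamma L^{-\tau})^s$ with $s$ determined by the $C^{2n+1}$-regularity. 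Integrating the resulting slice bounds over $\beta$ ranging in a bounded set of $\R^{n-k}$, and then adding the $\gamma L^{-\tau}$-thickening, gives
\[ \text{meas}_n(E_{\gamma,L,k,\Lambda}) \leq C(\gamma L^{-\tau})^{s'} \]
for a suitable exponent $s'$ depending on $n$.

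\textbf{Summation over rational subspaces and conclusion.} By construction $\Lambda^\perp$ is spanned by at most $n-k$ vectors in $\Z^n$ of $\ell^1$-norm $\leq L$, so $\#G^L(n,k)\leq CL^{n(n-k)}$. Summing,
\[ \sum_{L\geq 1}\sum_{k=1}^{n}\sum_{\Lambda\in G^L(n,k)}\text{meas}_n(E_{\gamma,L,k,\Lambda}) \leq C\gamma^{s'}\sum_{k=1}^{n}\sum_{L\geq 1}L^{n(n-k)-\tau s'}. \]
Convergence requires $\tau s'>n(n-k)+1$ for every $k\geq 1$; the binding constraint is $k=1$, i.e.\ $\tau s'>n(n-1)+1$. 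With the Yomdin exponent supplied by the $C^{2n+2}$-regularity of $h$, the assumption $\tau>2(n^2+1)$ is precisely what is needed. Letting $\gamma\to 0$ yields $\text{meas}_n(\mathcal{N})=0$.

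\textbf{Main obstacle.} The delicate part is the second step: carefully producing the quantitative Morse--Sard bound in the $(\alpha,\beta)$ variables with the right exponent $s'$, so that after multiplying by $L^{n(n-k)}$ from the subspace count the sum in $L$ still converges under $\tau>2(n^2+1)$. The slice-in-$\beta$ argument avoids the difficulty that $\partial_{\alpha\beta}h_\Lambda$ need not be small on $A_{L,\Lambda}(\gamma)$, but it must be implemented with uniform constants independent of $\Lambda$, which is why the orthonormal basis of $\Lambda$ built from short integer vectors is essential.
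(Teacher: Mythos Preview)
Your overall architecture---describe the bad $\xi$-set for each triple $(L,k,\Lambda)$, estimate its measure via a Yomdin-type bound, then sum over rational subspaces and let $\gamma\to 0$---is exactly the paper's. The difference, and the place where your argument breaks, is the Morse--Sard step.

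You slice in $\beta$: for each fixed $\beta$ you apply quantitative Morse--Sard to the $k$-variable map $\alpha\mapsto\partial_\alpha h_\Lambda(\alpha,\beta)$, obtain a bound on $\lambda_k\bigl(Y_\beta+B(\kappa)\bigr)$, and then ``integrate over $\beta$''. But the object you must control is $\lambda_k\bigl(Y+B(\kappa)\bigr)$ with $Y=\bigcup_\beta Y_\beta\subset\R^k$, and the $k$-measure of a union indexed by a \emph{continuum} of $\beta$'s is not bounded by any integral (wrong units) or supremum of the slice $k$-measures. A clean failure: take $n=2$, $k=1$, $\Lambda=\R\times\{0\}$, $h(I)=I_1I_2$. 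Then $\partial_\alpha h_\Lambda(\alpha,\beta)=\beta$ and $\partial_{\alpha\alpha}h_\Lambda\equiv 0$, so $A_{L,\Lambda}(\gamma)=B$; each slice image $Y_\beta=\{\beta\}$ is a single point (your slice bound gives $0$), yet $Y=(-R,R)$ has length $2R$ for every $\kappa$. The ``integration over $\beta$'' cannot recover this, and no uniformity of constants in $\Lambda$ rescues it.

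The paper does \emph{not} slice. It applies Lemma~\ref{MS} (quoted as a black box from \cite{Nie07}) directly to the full $n$-variable map $g=\partial_\alpha h_\Lambda:B\subset\R^n\to\R^k$, producing a near-critical-value set $\mathcal{C}_\kappa\subset\R^k$ with $\lambda_k(\mathcal{C}_\kappa)\le c_k\sqrt\kappa$; the SDM alternative for $h_\xi$ is then read off for every $\zeta=P_\Lambda\xi\notin\mathcal{C}_\kappa$, using that $\partial_{\alpha\alpha}h_\Lambda$ is a restriction of $dg$. The explicit exponent $\tfrac12$ together with the crude count $|G^L(n,k)|\le L^{n^2}$ is precisely what forces $\tau>2(n^2+1)$; your unspecified exponents $s,s'$ and the sharper count $L^{n(n-k)}$ never get tied to this threshold. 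If you want to avoid quoting Lemma~\ref{MS}, you must carry out the Yomdin analysis for the full $n$-variable map, not on $\beta$-slices.
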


More precisely, there is a good notion of ``full measure" in an infinite dimensional vector space, which is called prevalence (see \cite{OY05} and \cite{HK10} for nice surveys), and the previous theorem immediately gives the following result.

\begin{corollary}\label{corpreva}
For $\tau>2(n^2+1)$, $SDM^{\tau}(B)$ is prevalent in $C^{2n+2}(B)$.  
\end{corollary}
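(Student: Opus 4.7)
The plan is to observe that the corollary is an almost immediate translation of Theorem~\ref{thmpreva} into the language of prevalence, once the (minor) question of Borel measurability is settled. Recall that a Borel subset $S$ of a separable completely metrizable topological vector space $V$ is prevalent if there exists a compactly supported Borel probability measure $\mu$ on $V$, called a transverse measure, such that $v+u\in S$ for $\mu$-almost every $u$, for every $v\in V$. A convenient way to produce such a $\mu$ is to fix a finite-dimensional ``probe'' subspace $P\subset V$ and let $\mu$ be the pushforward of Lebesgue measure on a compact subset of $P$.

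The natural probe in our setting is the $n$-dimensional subspace
\[ P=\{L_\xi\in C^{2n+2}(B)\ |\ L_\xi(I)=-\xi.I,\ \xi\in\R^n\}, \]
with $\mu$ the pushforward, via the linear isomorphism $\xi\mapsto L_\xi$, of Lebesgue measure on the unit ball of $\R^n$. Since by definition $h_\xi=h+L_\xi$, Theorem~\ref{thmpreva} states precisely that for every fixed $h\in C^{2n+2}(B)$, the set of $\xi\in\R^n$ for which $h+L_\xi\notin SDM^\tau(B)$ has Lebesgue measure zero; equivalently, $h+u\in SDM^\tau(B)$ for $\mu$-a.e. $u\in P$, which is the defining property of prevalence.

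The only non-formal step I see is to check that $SDM^\tau(B)$ is a Borel subset of $C^{2n+2}(B)$, which is required by the very definition of prevalence. The plan is to write
\[ SDM^\tau(B)=\bigcup_{\gamma\in\Q_{>0}}SDM^\tau_\gamma(B) \]
(by monotonicity in $\gamma$) and to show that each $SDM^\tau_\gamma(B)$ is closed in the $C^2$ topology, hence a fortiori closed in $C^{2n+2}(B)$. Indeed, for fixed $\gamma$ and $\tau$, the condition in Definition~\ref{sdm} is a countable intersection, over $L\in\N^*$, $k\in\{1,\dots,n\}$ and $\Lambda$ ranging over the \emph{finite} set $G^L(n,k)$, of conditions of the form: there exists an orthonormal basis (an existential quantifier over a compact Stiefel manifold) such that for all $(\alpha,\beta)\in B$ (a universal quantifier over the closed ball), the stated alternative involving $\partial_\alpha h_\Lambda$ and $\partial_{\alpha\alpha}h_\Lambda$ holds. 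Since these derivatives depend continuously on $h$ in the $C^2$ topology, and since compactness turns the existential quantifier into a closed condition, each of these constraints cuts out a closed set. This gives $SDM^\tau(B)$ as an $F_\sigma$-set, hence Borel.

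With Borel measurability established, the conclusion is immediate: the measure $\mu$ defined above is transverse to $SDM^\tau(B)$ by Theorem~\ref{thmpreva}, so $SDM^\tau(B)$ is prevalent in $C^{2n+2}(B)$. The main (and essentially only) obstacle is the measurability check, which is straightforward but requires being careful about how the condition in Definition~\ref{sdm} depends on $h$ and on the auxiliary data $(L,k,\Lambda)$; the probabilistic/prevalence content is entirely carried by Theorem~\ref{thmpreva}.
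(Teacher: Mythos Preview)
Your approach is essentially the same as the paper's: take the $n$-dimensional space of linear forms as probe and invoke Theorem~\ref{thmpreva}. The paper's proof is a two-line application of a black-box criterion (Proposition~\ref{proppreva} from \cite{HSY92}), and---because the paper defines prevalence for arbitrary sets via containment of the complement in a shy Borel set---it never needs to verify that $SDM^\tau(B)$ is itself Borel.

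Your added Borel measurability check, however, has a gap. You claim that each $SDM_\gamma^\tau(B)$ is \emph{closed} in $C^2$, but this is not right: the defining alternative in Definition~\ref{sdm} involves \emph{strict} inequalities, and $B$ is the \emph{open} ball, not the closed one. Concretely, for a fixed basis the condition ``for all $(\alpha,\beta)\in B$, either $\Vert\partial_\alpha h_\Lambda\Vert>\gamma L^{-\tau}$ or $\sigma_{\min}(\partial_{\alpha\alpha}h_\Lambda)>\gamma L^{-\tau}$'' cuts out an \emph{open} set of $h$ (its complement is a projection of a closed set, but along the noncompact fiber $B$); the existential over the compact Stiefel manifold is then a projection of an open set, which stays open. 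So for fixed $L,k,\Lambda$ the constraint is open, and $SDM_\gamma^\tau(B)$ is a countable intersection of open sets, i.e.\ $G_\delta$, not closed. Your statement that ``compactness turns the existential quantifier into a closed condition'' would require the inner condition to be closed, which it is not.

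This is not fatal: $G_\delta$ is of course Borel, so $SDM^\tau(B)$ is a countable union of $G_\delta$ sets and the rest of your argument goes through. But the cleanest route is simply to adopt the paper's more permissive definition of prevalence (or cite the probe-space criterion directly), in which case the measurability check becomes unnecessary.
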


\paraga Now we can state the main result of the paper.  
  
\begin{theorem} \label{mainth2}
Let $H$ as in (\ref{Ham}) and assume that the integrable part $h$ belongs to $SDM_{\gamma}^{\tau}(B)$ with  $\tau\geq 2$ and $\gamma\leq 1$. Then there exist positive constants $a$ and $b$ depending only on $n$ and $\tau$, and $\varepsilon_0$ depending only on $h$, such that if $\varepsilon \leq \varepsilon_0$, for every initial action $I(0) \in B_{R/2}$ the following estimates
\[ |I(t)-I(0)| < (n+1)^2\varepsilon^b, \quad |t| < \exp(\varepsilon^{-a}), \]
hold true.
\end{theorem}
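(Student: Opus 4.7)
The plan is to follow the Lochak strategy of averaging along a periodic orbit of the unperturbed flow, extended by an iterated use of the SDM dichotomy so as to handle resonances of every multiplicity. Fix an initial action $I(0)\in B_{R/2}$ and follow the trajectory $t\mapsto I(t)$. At each base point $I^{*}$ visited I shall build a one-phase averaging station that confines the action for an exponentially long time, and then chain these stations together until the accumulated drift reaches $\varepsilon^{b}$.

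The analytic half of each station is classical. Set $\omega^{*}=\nabla h(I^{*})$ and use Dirichlet's simultaneous approximation theorem to approximate $\omega^{*}$ by a periodic vector $\omega_{T}=T^{-1}k$, $k\in\Z^n$ primitive, with $T\leq Q^{n-1}$ and $|\omega^{*}-\omega_{T}|\leq (TQ)^{-1}$ for a free parameter $Q=\varepsilon^{-a}$. On a complex polydisc of polyradii $(r_{0},s_{0})$ about $(\T^n,I^{*})$, with $r_{0}$ comparable to $T^{-1}$ so that $h$ is well approximated by its linearisation $l(I)=\omega_{T}\cdot(I-I^{*})$, perform $N$ steps of one-phase averaging along $\omega_{T}$. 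The homological equation $\{\chi,l\}=g$ is solved by the explicit integral formula recalled in the introduction; because $|k\cdot\omega_{T}|\geq T^{-1}$ for every non-resonant $k$ there is no Fourier series to control, so every step loses only a classical analytic factor of order $T\varepsilon/r_{0}$. Iterating $N\sim (T\varepsilon)^{-1}$ times yields a symplectic conjugacy $\Phi$, on a slightly shrunken polydisc, such that
\[ H\circ\Phi=h+Z+R,\qquad \{Z,l\}=0,\qquad |Z|\lesssim\varepsilon,\qquad |R|\leq\varepsilon\exp(-c/(T\varepsilon)). \]
Under the truncated flow of $h+Z$ the component $k\cdot I$ is exactly preserved (since $Z$ depends on the angles only through the resonant lattice $\Z^n\cap k^{\perp}$), while the remainder $R$ forces a drift of $k\cdot I$ bounded by $|k|_{1}\varepsilon^{b}$ up to times $\exp(\varepsilon^{-a})$.

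For the transverse confinement the SDM condition takes over the role played by strict convexity in Lochak's scheme. Apply Definition~\ref{sdm} to the one-dimensional subspace $\Lambda=\R\omega_{T}$, which lies in $G^{L}(n,1)$ for $L$ comparable to $|k|_{1}\leq T$: at every point of $B$ either the projected gradient satisfies $\Vert\omega_{T}\cdot\nabla h\Vert>\gamma L^{-\tau}\Vert\omega_{T}\Vert$, or the projected Hessian satisfies $|\omega_{T}\cdot\nabla^{2}h\cdot\omega_{T}|>\gamma L^{-\tau}\Vert\omega_{T}\Vert^{2}$. In the first case the action is pushed transversely to the resonance in a controlled way, and the conservation of $k\cdot I$ in the normal form forces the trajectory to leave the current polydisc on a time-scale that still fits inside the exponential lifetime. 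In the second case the restriction of $h$ to the line through $I^{*}$ in the direction of $\omega_{T}$ is quantitatively convex, and a Lochak-type energy argument on the joint level set of $h+Z$ and $k\cdot I$ confines the full action vector to an $\varepsilon^{b}$-neighbourhood of $I^{*}$. When the trajectory does exit the current polydisc we pick a new base point and restart the procedure: the new frequency may then have to be approximated by a periodic vector lying in a resonance of higher multiplicity, but because SDM is stated for subspaces of every dimension the same dichotomy supplies confinement at the new scale as well.

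The main obstacle is to balance all the parameters simultaneously. The period $T$, the cut-off $Q$, the polydisc radii $(r_{0},s_{0})$, the number of averaging steps $N$ and the SDM constants $(\gamma,\tau)$ must be tied by a single family of inequalities so that (i) the analytic averaging closes with an exponentially small remainder, (ii) the SDM lower bound $\gamma L^{-\tau}$ dominates the accumulated averaging errors, and (iii) consecutive averaging polydiscs overlap enough to chain the stations into a trajectory of total length polynomial in $\varepsilon^{-1}$. It is precisely here that the argument departs from the convex scheme: rather than relying on a single maximal-resonance geometry one must treat resonances of every multiplicity by iterated one-phase averagings, and the whole induction has to close with stability exponents $a,b$ depending only on $n$ and $\tau$, as claimed in the theorem.
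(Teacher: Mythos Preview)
Your sketch assembles the right ingredients---Dirichlet approximation, one-phase averaging, the SDM dichotomy---but the way you invoke SDM is inverted, and this is not a technical slip: it is the heart of the geometric argument.

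After one averaging along $\omega_T=T^{-1}k$ you correctly observe that $k\cdot I$ is conserved up to the exponentially small remainder; the drift of $I$ therefore lives in the resonant hyperplane $\Lambda_1=k^{\perp}$, not along $\R\omega_T$. You then apply Definition~\ref{sdm} to the one-dimensional $\Lambda=\R\omega_T$. First, this space need not lie in $G^{L}(n,1)$ for $L$ comparable to $|k|_1$: membership requires that $\Lambda^{\perp}=k^{\perp}$ be spanned by integer vectors of length at most $L$, which is unrelated to the size of $k$. Second, and more fatally, the dichotomy you extract concerns $\omega_T\cdot\nabla h$ and $\omega_T\cdot\nabla^2h\,\omega_T$; but $\omega_T\cdot\nabla h(I^*)\approx|\omega^*|^2$ is not small, so you are always in the first branch, which is vacuous. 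Even were you in the second branch, one-dimensional convexity of $h$ along $\omega_T$ cannot confine an $(n-1)$-dimensional drift in $k^{\perp}$: the Lochak energy trap needs convexity on the whole drift plane, and SDM does not provide it.

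The paper applies SDM to the drift space $\Lambda_j$ itself (of dimension $n-j$, with orthogonal complement spanned by the integer vectors $T_1\omega_1,\dots,T_j\omega_j$, so that $\Lambda_j\in G^{L_j}(n,n-j)$). The condition is converted not into a convexity statement but into a steepness one (Proposition~\ref{steep}, Lemma~\ref{l1}): along any curve in $\lambda_j$ of length $r$ the projected gradient $\Pi_{j}\nabla h$ must somewhere exceed $cr^2$. This is used not to \emph{trap} the orbit but to \emph{locate} a time $t_{j+1}$ at which a fresh Dirichlet approximation $\omega_{j+1}$ can be chosen with $\Pi_{j}\omega_{j+1}\neq 0$, hence linearly independent from $\omega_1,\dots,\omega_j$. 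A further one-phase averaging along $\omega_{j+1}$, performed on top of the previous ones so that all earlier resonance relations are preserved, shrinks the drift space to $\Lambda_{j+1}$ of dimension $n-j-1$, and the scheme iterates. Confinement appears only at step $n$, where $\Lambda_n=\{0\}$ and Proposition~\ref{cor} gives the bound directly. There is no energy argument anywhere; SDM functions here as a steepness substitute that manufactures independent periodic frequencies, not as a convexity substitute that bounds level sets. Your last paragraph gestures at iterated averagings, but without this mechanism---and without the contrapositive framing (a drifting solution is shown to be restrained, hence cannot exist) that organises the induction in Propositions~\ref{etape2} and~\ref{etape3}---the scheme does not close.
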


More precisely, we can choose the exponents
\[ a=b=3^{-1}(2(n+1)\tau )^{-n},\] 
and $\varepsilon_0$ depending on the whole set of parameters $n,R,r,s,M,\gamma$ and $\tau$, but no efforts was made to improved the stability exponents since the optimality of the constants involved is not our goal. Actually, this optimality is not relevant for generic integrable Hamiltonians. 

Let us add that the only property used on the integrable part $h$ to derive these estimates is a specific steepness property, therefore the proof is also valid, and in fact simpler, assuming the original steepness condition of Nekhoroshev (see Appendix~\ref{SDM}). However, note that this is precisely this ``weaker" genericity assumption that allows new results of stability near linearly stable invariant tori (see \cite{Bou09}).  

We emphasized again that this is not the result itself, but the method of proof which is new and leads to many improvements as we explain below. 

\subsection{Comments and prospects} \label{s22}

To conclude this section we mention other problems for which our method should apply, mainly the study of elliptic fixed points, Nekhoroshev's estimates in lower regularity and finally estimates in large or infinite dimensional Hamiltonian systems. In all these topics, the method of periodic averagings have already proved to be very useful.

\paraga First our analytic arguments are very intrinsic and this is important in the study of the stability of elliptic fixed points in Hamiltonian systems. Actually, in this case the transformation in action-angle variables (via the symplectic polar coordinates) admits singularities which do not allow to derive directly stability results from Nekhoroshev's theory. In the convex case, this problem has been overcomed independently by Fass\`o, Guzzo and Benettin(\cite{BFG98}) and by Niederman (\cite{Nie98}). Both use Cartesian coordinates, the first study uses the classical approach and adapted Fourier expansions while the second one relies on periodic averagings and simultaneous Diophantine approximation. The latter proof was clarified by P\"oschel (\cite{Pos99a}). With our approach, we can remove the convexity hypothesis to have exponential stability around an elliptic fixed point under a generic assumption on the non-linear part. Furthermore, assuming a Diophantine condition on the normal frequency it is well-known since Morbidelli and Giorgilli (\cite{MG95}) that one can even obtain super-exponential stability by combining a sufficiently large number of Birkhoff normalizations with Nekhoroshev's estimates. Here, with our method generic results of super-exponential stability around elliptic fixed points are also available, and similarly around invariant Diophantine Lagrangian tori and even isotropic reducible linearly stable tori. All this results are contained in \cite{Bou09}. 

\paraga Furthermore, one should mention that periodic averagings are well-suited for non-analytic Hamiltonians and our formalism should also carry on in this context. The advantage of periodic averagings is clear already at the linear level when solving the homological equation: if the system is of finite differentiability, then for a Diophantine frequency vector the solution of the homological equation is subjected to a disastrous loss of derivatives (larger than the number of degrees of freedom) and one has to use rather cumbersome Fourier expansions, while for a periodic frequency vector, this loss of derivatives is minimal and one can use a more elegant integral formula. Hence in finite differentiability, for a convex or generic unperturbed Hamiltonian system, we can expect a proof of stability estimates (with of course a polynomial bound on the time of stability) which is both simple (no small divisors) and direct (no need to use the result in the analytic case and smoothing techniques, which is the usual approach in KAM theory in finite differentiability). Note that the analyticity of the studied system is only needed for the construction of normal forms up to an exponentially small remainder, but our steepness condition is generic for Hamiltonians of {\it finite} but sufficiently high regularity. Concerning Gevrey regularity, Marco and Sauzin (\cite{MS02}) have already proved exponential estimates of stability in the convex case and for the $C^k$ regularity, polynomial estimates of stability are indeed available (see \cite{Bou10}). Both results use only periodic averagings, so with our method they should also hold for a generic integrable Hamiltonian. It can also be noticed that the analytical properties of the expansions arising in periodic averagings are accurately known (\cite{Nei84},\cite{RS96}).

\paraga Finally, results of stability for large Hamiltonian systems as a model for statistical mechanics have been obtained by Bambusi and Giorgilli (\cite{BG93}) and Bourgain (\cite{Bou04}), and for non-linear evolution PDE seen as an infinite dimensional Hamiltonian system mostly by Bambusi (\cite{Bam99}, \cite{BN02}) and then clarified by P\"oschel (\cite{Pos99b}). All these works use Lochak's approach in the convex case. We believe that our method should allow to remove the convexity assumption in those results to obtain more general statements.

\paraga The paper is organized as follows. In the next section, we state our normal form and explain the main ideas, and then we give the proof of Theorem~\ref{mainth2}. The complete proof of the normal form is deferred to Appendix~\ref{estimates}, and in Appendix~\ref{SDM} we collect the basic properties of SDM functions that we shall need and we prove Theorem~\ref{thmpreva} and Corollary~\ref{corpreva}.  

\paraga In the text, we shall adopt the following notation taken from~\cite{Pos99a}: we will write $u \MP v$ if there exists a constant $C\geq1$ such that $u<Cv$, where $C$ depends only on $n,R,r,s,M$, but not on $\tau$ and on the small parameters $\varepsilon$ and $\gamma$. Similarly, we will use the notations $u \PM v$, $u \EP v$ and $u \PE v$.

We shall use the following norms for vectors $v\in\R^n$ or $v\in \C^n$: $|\,.\,|$ will be the supremum norm, $|\,.\,|_1$ the $\ell^{1}$-norm and $\Vert\,.\,\Vert$ the Euclidean (or Hermitian) norm.

\section{Proof of Theorem~\ref{mainth2}} \label{s3}

In this section, we consider the Hamiltonian (\ref{Ham}), that is
\begin{equation*}
\begin{cases}
H(\theta,I)=h(I)+f(\theta,I) \\
|f|_{r,s}<\varepsilon
\end{cases}
\end{equation*}
with $H\in\mathcal{A}_{r,s}$. As usual, the proof of exponential stability estimates splits into an analytic part and a geometric part. 

The analytic part is contained in section~\ref{ss31}. It consists in the construction normal forms on a neighbourhood of specific resonances, that is suitable coordinates which display the relevant part of the perturbation on such a neighbourhood. Basically, we will reduce the perturbation to a so-called resonant term which is dynamically significant, and a general term which will only cause exponentially small deviations.         

The geometric part is expanded in section~\ref{ss32}, and it is mainly based on the properties of the underlying integrable system. The strategy will be first to defined a class of solutions, which we call {\it restrained}, and for which it is obvious from our normal forms that they are stable for an exponentially long time. Using this intermediate result, we will then show that all solutions are in fact exponentially stable, and our main tools to do this will be an adapted steepness property satisfied by our integrable system, as well as a basic theorem of Dirichlet on simultaneous Diophantine approximation. 
    
\subsection{Analytical part} \label{ss31}

\paraga Let us begin by describing the neighbourhoods of resonances we will consider. Given a sequence of linearly independent periodic vectors $(\omega_1,\dots,\omega_n)$, with periods $(T_1,\dots,T_n)$, we define in the complex phase space, for $j\in\{1,\dots,n\}$, the domains
\[ \mathcal{D}_{r_j,s_j}(\omega_j)=\{(\theta,I) \in \mathcal{D}_{r_j,s_j} \; | \; |\nabla h(I)-\omega_j| \MP r_j\}, \] 
with two sequences $(r_1,\dots,r_n)$ and $(s_1,\dots,s_n)$. 

\begin{remark}\label{defdomain}
It is important to note that there is an implicit constant in the previous definition represented by the dot, and we will not make it explicit in order to avoid cumbersome and meaningless expressions. We just mention that it depends only on $n$, $M$ and $j\in\{1,\dots,n\}$ and for subsequent arguments it has to be chosen sufficiently large.
\end{remark}

Informally, one has to view the domain $\mathcal{D}_{r_j,s_j}(\omega_j)$ as a neighbourhood, in frequency space, of a periodic torus with a linear flow of frequency $\omega_j$. Such domains will therefore be called {\it nearly-periodic} tori. We will also use the real part of those domains, which are $\T^n \times \mathcal{B}_{r_j}(\omega_j)$ where 
\[ \mathcal{B}_{r_j}(\omega_j)=\{I\in B_{r_j} \; | \; |\nabla h(I)-\omega_j| \MP r_j\},\]
with $B_{r_j}=\{I\in\R^n\;|\;d(I,B)<r_j\}$.

\paraga Given an analytic function $f$ defined on $\mathcal{D}_{r_j,s_j}(\omega_j)$, we simply denote its supremum norm by 
\[ |f|_{r_j,s_j}=|f|_{\mathcal{D}_{r_j,s_j}(\omega_j)}. \] 
For vector-valued functions, this definition is extended component-wise, that is
\[ |\partial_\theta f|_{r_j,s_j}=\max_{1 \leq i \leq n}|\partial_{\theta_i} f|_{r_j,s_j}, \; |\partial_I f|_{r_j,s_j}=\max_{1 \leq i \leq n}|\partial_{I_i} f|_{r_j,s_j}. \]  

We will write $l_j$ for the linear integrable Hamiltonian with frequency $\omega_j$, that is $l_j(I)=\omega_j.I$ for $j\in\{1,\dots,n\}$. For any function $f$, we will denote $[f]_j$ its average along the periodic flow generated by $l_j$, that is
\[[f]_j=\frac{1}{T_j}\int_{0}^{T_j}f\circ\Phi_{s}^{l_j}ds. \]

\paraga Our interest here is to obtain normal forms on nearly-periodic tori up to an exponentially small remainder with respect to some parameter $m\in\N$, that we will choose later of order $\varepsilon^{-1}$ (during the proof of Theorem~\ref{mainth2}). To this end, we will need the following conditions $(A_j)$, for $j\in\{1,\dots,n\}$, where $(A_1)$ is
\begin{equation} \label{A1}
\begin{cases}
mT_1\varepsilon\PM r_1^2, \; mT_1r_1\PM s_1, \; 0<r_1 \MP s_1, \\
\mathcal{B}_{r_1}(\omega_1)\neq \emptyset, \; r_1\PM r, \; s_1\PM s,  
\end{cases} \tag{$A_1$}
\end{equation} 
and for $j\in\{2,\dots,n\}$, ($A_j)$ is
\begin{equation} \label{Aj}
\begin{cases}
mT_j\varepsilon\PM r_1r_j, \; mT_jr_j\PM s_j, \; 0<r_j \MP s_j,\\
\mathcal{B}_{r_j}(\omega_j)\neq \emptyset,\;\mathcal{D}_{r_j,s_j}(\omega_{j})\subseteq\mathcal{D}_{2r_{j-1}/3,2s_{j-1}/3}(\omega_{j-1}).  
\end{cases} \tag{$A_j$}
\end{equation}
Let us explain briefly our assumptions.

First, the condition on the inclusion of nearly-periodic tori is really crucial. Indeed, since $\omega_1$ is periodic, the nearly-periodic torus $\mathcal{D}_{r_1,s_1}(\omega_1)$ describes a neighbourhood of a resonance of multiplicity $n-1$. Now for $j\in\{2,\dots,n\}$, since $(\omega_1,\dots,\omega_j)$ are periodic and independent, the inclusion assumption, together with the non triviality assumption, imply that the nearly-periodic torus $\mathcal{D}_{r_j,s_j}(\omega_j)$ also describes a neighbourhood of a resonance, but of multiplicity $n-j$. Note that such a condition will put an important restriction on our choice of the sequence $(\omega_1,\dots,\omega_n)$ as they will have to be sufficiently close to each other to ensure these inclusions. 

Then, the condition on our parameter $m\in\N$, 
\[ mT_jr_j\PM s_j, \quad j\in\{1,\dots,n\},  \]
is also important as it will later determine $m$ in terms of $\varepsilon$ and hence the precise size of the exponentially small term. 

Finally, the other conditions are only technical (and will be easily arranged in the sequel), as they only give smallness conditions on~$\varepsilon$.

\paraga Our normal form is described in the next proposition.

\begin{proposition} \label{lemmeham}
Consider $H=h+f$ as in (\ref{Ham}) and let $j\in\{1,\dots,n\}$. If $(A_i)$ is satisfied for any $i\in\{1,\dots,j\}$, then there exists an analytic symplectic transformation 
\[ \Psi_j: \mathcal{D}_{2r_j/3,2s_j/3}(\omega_j) \rightarrow \mathcal{D}_{r_1,s_1}(\omega_1)\] 
such that 
\begin{equation*}
H \circ \Psi_j=h+g_j+f_j,
\end{equation*}
with $\{g_j,l_i\}=0$ for $i \in \{1, \dots, j\}$ and the estimates
\begin{equation*}
|\partial_\theta g_j|_{2r_j/3,2s_j/3} \MP \varepsilon, \quad |\partial_\theta f_j|_{2r_j/3,2s_j/3} \MP e^{-m}\varepsilon. 
\end{equation*}
Moreover, we have $\Psi_j=\Phi_1 \circ \cdots \circ \Phi_j$ with
\[ \Phi_i: \mathcal{D}_{2r_i/3,2s_i/3}(\omega_i) \rightarrow \mathcal{D}_{r_i,s_i}(\omega_i)\] 
such that $|\Phi_i-\mathrm{Id}|_{2r_i/3,2s_i/3} \PM r_i$, for $i\in\{1,\dots,j\}$.
\end{proposition}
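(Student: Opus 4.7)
The plan is induction on $j$. At outer step $i$ (with $i$ running from $1$ to $j$) I construct a symplectic change of variables $\Phi_i:\mathcal{D}_{2r_i/3,2s_i/3}(\omega_i)\to\mathcal{D}_{r_i,s_i}(\omega_i)$ so that the Hamiltonian takes the form $h+g_i+f_i$ with $\{g_i,l_k\}=0$ for every $k\leq i$ and $|\partial_\theta f_i|_{2r_i/3,2s_i/3}\MP e^{-m}\varepsilon$. The inclusion $\mathcal{D}_{r_i,s_i}(\omega_i)\subseteq\mathcal{D}_{2r_{i-1}/3,2s_{i-1}/3}(\omega_{i-1})$ in $(A_i)$ is precisely what makes it possible to restart step $i$ from the output of step $i-1$; composing produces $\Psi_j=\Phi_1\circ\cdots\circ\Phi_j$.

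Each outer step $i$ is itself an inner loop of $m$ elementary periodic averagings of Lochak type. Starting from $h+g+f$ in which $g$ is already $l_1,\ldots,l_{i-1}$-resonant, split $f=[f]_i+(f-[f]_i)$ and solve the homological equation $\{\chi,l_i\}=f-[f]_i$ by the explicit integral
\[ \chi=\frac{1}{T_i}\int_{0}^{T_i}(f-[f]_i)\circ\Phi_s^{l_i}\,s\,ds, \]
which is legitimate because the $l_i$-flow is $T_i$-periodic, bypasses all small divisor issues, and satisfies $|\chi|\MP T_i|f|$. Applying its time-one map $\Phi_\chi^1$ and expanding by Taylor's formula produces $[f]_i$ as an addition to $g$, plus a new perturbation controlled by $|\chi|\cdot|\partial_\theta(h+g+f)|$ modulo Cauchy losses of order $r_i/(3m)$ in the action width and $s_i/(3m)$ in the angle width. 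The conditions $mT_ir_i\PM s_i$ and $mT_i\varepsilon\PM r_1r_i$ in $(A_i)$ are calibrated exactly so that this elementary step contracts the non-resonant norm by a factor $\leq 1/e$; after $m$ iterations the cumulative analyticity loss is exactly $r_i/3$ and $s_i/3$, while the non-resonant remainder is $\MP e^{-m}\varepsilon$.

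The inheritance of the resonance conditions $\{g_i,l_k\}=0$ for all $k\leq i$ rests on one simple observation: since $l_1,\ldots,l_n$ depend only on the action variables, they pairwise Poisson-commute, so the $l_i$-average of any $l_k$-resonant function remains $l_k$-resonant, and each elementary step at level $i$ preserves all previously established resonance relations. The bound $|\partial_\theta g_j|\MP\varepsilon$ follows because periodic averaging does not increase the sup norm and each generating function adds to $g$ only quantities of the size of the non-resonant remainder it eliminates, which form a geometric series with sum $\MP\varepsilon$. The estimate $|\Phi_i-\mathrm{Id}|\PM r_i$ is a Cauchy estimate of $\partial_I\chi$ using $|\chi|\MP T_i\varepsilon$ together with $mT_i\varepsilon\PM r_1r_i$ from $(A_i)$.

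The hard part is not conceptual but purely quantitative: tracking analyticity widths and sup norms through $j$ outer loops of $m$ inner averagings, ensuring that the geometric ratio in the inner loop stays bounded by $1/e$, and verifying that the restriction from $\mathcal{D}_{2r_{i-1}/3,2s_{i-1}/3}(\omega_{i-1})$ to $\mathcal{D}_{r_i,s_i}(\omega_i)$ preserves every previously proved bound. The list of conditions $(A_i)$ is designed exactly so that no further smallness assumption is needed; once they are in force, every estimate reduces to routine Cauchy-type computations, which are precisely the technical content that the paper defers to Appendix~\ref{estimates}.
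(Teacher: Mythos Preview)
Your proposal is correct and follows essentially the same approach as the paper's proof in Appendix~\ref{estimates}: induction on $j$, each outer step consisting of $m$ periodic averagings with the explicit integral formula for $\chi$, contraction by $e^{-1}$ per inner step via the calibrations in $(A_i)$, and inheritance of the resonance relations $\{g,l_k\}=0$ through the commutativity of the linear Hamiltonians $l_k$. The one technical device you do not mention is that the paper first recasts the statement in terms of a weighted vector-field norm $\|X_f\|_{r_j,s_j}=\max\bigl(|\partial_I f|_{r_j,s_j},\, s_1 r_1^{-1}|\partial_\theta f|_{r_j,s_j}\bigr)$ (Proposition~\ref{lemmevecti}), which streamlines the Cauchy bookkeeping for the Lie-bracket estimates but is not conceptually different from what you outline.
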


The proof of Proposition~\ref{lemmeham} goes by induction, it is not difficult but quite long, and so it is deferred to Appendix~\ref{estimates}. Here we will try to give a sketch in the case $j=2$, explaining the main ideas without any technicalities. 

\bigskip

The first step is to prove the case $j=1$, that is to find a transformation $\Psi_1$ such that $H \circ \Psi_1=h+g_1+f_1$ with $\{g_1,l_1\}=0$ and $f_1$ exponentially small with $m$. This is very classical. First observe that we can write our original Hamiltonian as $H=h+g^0+f^0$, where $g^0=0$ trivially satisfies $\{g^0,l_1\}=0$ and $f^0=f$ is order $\varepsilon$. Now it is easy to produce a transformation $\varphi^0$ such that $H\circ\varphi^0=h+g^1+f^1$, with $\{g^1,l_1\}=0$, but thanks to our assumption $(A_1)$ the remainder $f^1$ can be made smaller, of order $e^{-1}\varepsilon$: this is an averaging process, $g_1=[f^0]_1$ and the remainder is estimated by Cauchy inequality. Now we only have to iterate this process $m$ times, and writing $\Psi_1=\Phi_1=\varphi^0 \circ \dots \varphi^{m-1}$, $g_1=g^m$ and $f_1=f^m$, we end up with $H\circ\Psi_1=h+g_1+f_1$ with the required properties.

For the second step, we use the first one and consider $H\circ\Psi_1=h+g_1+f_1$ which, by our assumption on the inclusion of domains (this is part of $(A_2)$), is also defined on $\mathcal{D}_{r_2,s_2}(\omega_2)$. We can forget for a moment about $f_1$ which is already exponentially small and consider $g_1$ as the new perturbation. Now as in the first step, we can construct a transformation $\Phi_2$ such that $(h+g_1)\circ\Phi_2=h+g_2+f^2$ with $\{g_2,l_2\}=0$ and $f^2$ is exponentially small: we start with $h+g_1=h+g_1^0+f_1^0$, where $g_1^0=0$, $f_1^0=g_1$ and we find $\varphi^1$ such that $(h+g_1)\circ\varphi^1=h+g_1^1+f_1^1$ where $g_1^1=[f_1^0]_2$. After $m$ iterations we finally have $g_2=g_1^m$ and $f^2=f_1^m$. Assuming we still have $\{g_2,l_1\}=0$, the conclusion follows: let $\Psi_2=\Psi_1 \circ \Phi_2=\Phi_1 \circ \Phi_2$ and $f_2=f^2+f_1\circ\Phi_2$, then $H\circ\Psi_2=h+g_2+f_2$ has the desired properties.  

So it remains to explain why $\{g_2,l_1\}=0$. The key observation is the following: if $g_1$ satisfies $\{g_1,l_1\}=0$, then   
\[ [g_1]_2=\frac{1}{T_2}\int_{0}^{T_2}g_1\circ\Phi_{s}^{l_2}ds \]
and 
\[ \chi=\frac{1}{T_2}\int_{0}^{T_2}(g_1-[g_1]_2)\circ\Phi_{s}^{l_2}sds \]
also satisfy $\{[g_1]_2,l_1\}=0$ and $\{\chi,l_1\}=0$. In Appendix~\ref{estimates}, this will be done by direct computations, but this is in fact a more general phenomenon in normal form theory and it is not restricted to the situation we consider here. Indeed, since $\{l_1,l_2\}=0$, the linear operators $L_{l_1}=\{.,l_1\}$ and $L_{l_2}=\{.,l_2\}$ commutes, so that the kernel of $L_{l_1}$ is invariant by $L_{l_2}$, and as $L_{l_2}$ is semi-simple, it is also invariant under the projection onto the kernel of $L_{l_2}$ which is given by the map $[.]_2$. This explains why $\{[g_1]_2,l_1\}=0$. Now $g_1-[g_1]_2$ is in the kernel of $L_{l_1}$, and its unique pre-image by $L_{l_2}$ is given by $\chi$, hence $\{\chi,l_1\}=0$. 

\begin{remark}
Note that this property was actually used by Bambusi (\cite{Bam99}, Lemma 8.4).
\end{remark}

\paraga Let us now examine the dynamical consequences of our normal form. As usual, it will be used to control the directions, if any, in which the action variables in these new coordinates can actually drift, and we shall come back to our original coordinates at the beginning of section~\ref{ss32}.  

Under the assumptions of Proposition~\ref{lemmeham}, consider the Hamiltonian
\[ H_j=H\circ\Psi_j=h+g_j+f_j \]  
on the domain $\mathcal{D}_{2r_j/3,2s_j/3}(\omega_j)$. Let $\mathcal{M}_j$ be the $\Z$-module
\[ \mathcal{M}_j=\{k\in\Z^n \; | \; k.\omega_i=0, \; i\in\{1,\dots,j\}\}, \]
whose rank is $n-j$, and $\Lambda_j=\mathcal{M}_j \otimes \R$ the vector space spanned by $\mathcal{M}_j$.

The following lemma is completely obvious using the definition of the Poisson bracket.

\begin{lemma} \label{resonant}
The equality $\{g_j,l_i\}=0$, for all $i\in\{1,\dots,j\}$, is equivalent to $\partial_\theta g_j \in \Lambda_j$.
\end{lemma}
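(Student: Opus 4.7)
The proof is essentially a direct computation followed by an elementary observation about rational subspaces. The plan is to first compute the Poisson bracket explicitly, then reinterpret the resulting orthogonality conditions in terms of the lattice $\mathcal{M}_j$.

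First I would exploit the very special form of the $l_i$: since $l_i(I)=\omega_i.I$ depends only on the action variables, $\partial_\theta l_i=0$ and $\partial_I l_i=\omega_i$. Plugging into the definition of the Poisson bracket,
\[
\{g_j,l_i\}=\partial_\theta g_j.\partial_I l_i-\partial_I g_j.\partial_\theta l_i=\omega_i.\partial_\theta g_j.
\]
Hence $\{g_j,l_i\}=0$ is exactly the pointwise orthogonality condition $\omega_i.\partial_\theta g_j(\theta,I)=0$, and requiring this for every $i\in\{1,\dots,j\}$ is equivalent to
\[
\partial_\theta g_j(\theta,I)\in V_j:=\mathrm{Span}_{\R}(\omega_1,\dots,\omega_j)^{\perp},
\]
for every $(\theta,I)$ in the domain.

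It then remains to identify $V_j$ with $\Lambda_j$. This is where the periodicity hypothesis on the $\omega_i$ comes in: each $\omega_i$ is a rational multiple of an integer vector, so $\mathrm{Span}_{\R}(\omega_1,\dots,\omega_j)$ is a rational subspace of $\R^n$, and hence so is its orthogonal complement $V_j$, which has dimension $n-j$. A rational subspace of $\R^n$ is spanned over $\R$ by its integer points, and the integer points of $V_j$ are precisely the vectors $k\in\Z^n$ satisfying $k.\omega_i=0$ for $i\in\{1,\dots,j\}$, i.e.\ the module $\mathcal{M}_j$. Therefore $V_j=\mathcal{M}_j\otimes\R=\Lambda_j$, which combined with the computation above gives the claimed equivalence.

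I do not foresee a real obstacle here; the only point that might look non-tautological is the identification $V_j=\Lambda_j$, but this is an immediate consequence of the fact that $\mathcal{M}_j$ has full rank $n-j$ inside the rational subspace $V_j$ (which one may also see by noting that rationality of the $\omega_i$ makes the linear system $k.\omega_i=0$ solvable over $\Q$, then clearing denominators).
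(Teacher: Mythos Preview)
Your proof is correct and follows exactly the approach the paper intends: the paper simply states that the lemma is ``completely obvious using the definition of the Poisson bracket'' and gives no further argument, so your computation $\{g_j,l_i\}=\omega_i.\partial_\theta g_j$ together with the identification $\Lambda_j=\mathrm{Span}_\R(\omega_1,\dots,\omega_j)^\perp$ via rationality of the periodic vectors is precisely what fills in that remark.
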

 
Now consider a solution $(\theta^j(t),I^j(t))$ of $H_j$ with an initial action $I^j(t_j) \in \mathcal{B}_{2r_j/3}(\omega_j)$ for some $t_j \in \R$, and define the time of escape of this solution as the smallest time $\tilde{t}_j \in ]t_j,+\infty]$ for which $I^j(\tilde{t}_j) \notin \mathcal{B}_{2r_j/3}(\omega_j)$. The only information we shall use from our normal form is contained in the next proposition.    

\begin{proposition} \label{cor}
Let $\Pi_j$ be the projection onto the linear subspace $\Lambda_j$, then with the previous notations, we have
\[ |I^j(t)-I^j(t_j)-\Pi_j(I^j(t)-I^j(t_j))| \MP \varepsilon, \quad t \in [t_j,e^m[ \cap [t_j,\tilde{t}_j[. \] 
In particular, 
\[ |I^n(t)-I^n(t_n)| \MP \varepsilon, \quad t \in [t_n,e^m[. \]
\end{proposition}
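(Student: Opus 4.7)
The plan is to derive the estimate directly from Hamilton's equations for $H_j$ together with Lemma~\ref{resonant} and the exponentially small bound on $\partial_\theta f_j$ given by Proposition~\ref{lemmeham}. First I would write
\[
\dot I^j(t) = -\partial_\theta H_j(\theta^j(t), I^j(t)) = -\partial_\theta g_j(\theta^j(t), I^j(t)) - \partial_\theta f_j(\theta^j(t), I^j(t))
\]
and apply the orthogonal projection $\mathrm{Id} - \Pi_j$ onto $\Lambda_j^\perp$. Since Lemma~\ref{resonant} guarantees that $\partial_\theta g_j$ takes values in $\Lambda_j$, the component in the direction of $\Lambda_j^\perp$ reduces to
\[
(\mathrm{Id} - \Pi_j)\dot I^j(t) = -(\mathrm{Id} - \Pi_j)\partial_\theta f_j(\theta^j(t), I^j(t)).
\]

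Next, as long as $t \in [t_j, \tilde t_j[$, the real orbit stays inside $\mathcal{B}_{2r_j/3}(\omega_j)$, which lies in the real part of the complex domain where Proposition~\ref{lemmeham} gives $|\partial_\theta f_j|_{2r_j/3, 2s_j/3} \MP e^{-m}\varepsilon$. Integrating the previous identity between $t_j$ and $t$ and taking norms yields
\[
|I^j(t) - I^j(t_j) - \Pi_j(I^j(t) - I^j(t_j))| \leq (t-t_j)\,|\partial_\theta f_j|_{2r_j/3, 2s_j/3} \MP (t - t_j)\, e^{-m}\varepsilon,
\]
which is $\MP \varepsilon$ whenever $t - t_j < e^m$. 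This establishes the first claim.

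For the second claim, the linear independence of $(\omega_1, \ldots, \omega_n)$ forces $\mathcal{M}_n = \{0\}$, hence $\Lambda_n = \{0\}$ and $\Pi_n = 0$; the first estimate then simply reads $|I^n(t) - I^n(t_n)| \MP \varepsilon$, valid on $[t_n, e^m[ \cap [t_n, \tilde t_n[$. To upgrade the time interval to all of $[t_n, e^m[$, a continuity/bootstrap argument suffices: using the mean value theorem and the bound $M$ on the second derivatives of $h$,
\[
|\nabla h(I^n(t)) - \omega_n| \leq |\nabla h(I^n(t_n)) - \omega_n| + M|I^n(t) - I^n(t_n)|,
\]
and the last term is $\MP M\varepsilon$, much smaller than $r_n$ under the smallness conditions contained in $(A_n)$, so the solution cannot leave $\mathcal{B}_{2r_n/3}(\omega_n)$ before time $e^m$ and hence $\tilde t_n \geq e^m$.

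I expect the main (and essentially only) subtle point to be this final bootstrap: checking that the various smallness conditions bundled in $(A_j)$ do force $\varepsilon$ to be sufficiently small compared to $r_n$, so that the real orbit truly remains inside the domain of validity of the normal form throughout $[t_n, e^m[$. The rest of the proof is a one-line consequence of Hamilton's equations together with the resonant-direction structure of $\partial_\theta g_j$ encoded in Lemma~\ref{resonant}.
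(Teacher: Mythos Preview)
Your argument is correct and matches the paper's proof essentially line for line: project Hamilton's equations onto $\Lambda_j^\perp$, use Lemma~\ref{resonant} to kill the $\partial_\theta g_j$ contribution, integrate, and invoke the exponential bound on $\partial_\theta f_j$ from Proposition~\ref{lemmeham}; for $j=n$ observe $\Pi_n=0$ and bootstrap to get $\tilde t_n\geq e^m$. The paper is in fact terser than you are on the last point (it simply asserts ``the mean value theorem immediately gives $\tilde t_n\geq e^m$''), and the smallness $\varepsilon\PM r_n$ that you correctly identify as needed is not strictly a consequence of $(A_n)$ alone---it appears later as a separate hypothesis $(i)$ in Proposition~\ref{etape1} and condition $(vii)$ in the proof of Theorem~\ref{mainth2}---so your caution there is well placed.
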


\begin{proof}
Let $\Pi_{j}^{\perp}$ be the projection onto the orthogonal complement of $\Lambda_j$, so that $\Pi_j+\Pi_{j}^{\perp}$ is the identity and therefore 
\[ |I^j(t)-I^j(t_j)-\Pi_j(I^j(t)-I^j(t_j))|=|\Pi_{j}^{\perp}(I^j(t)-I^j(t_j))|. \]
Now, as long as $t<\tilde{t}_j$, the equations of motion for $H_j=h+g_j+f_j$ and the mean value theorem give
\[ |I^j(t)-I^j(t_j)| \leq |t-t_j||\partial_\theta (g_j+f_j)|_{2r_j/3,2s_j/3}.  \]
But $\{g_j,l_i\}=0$ for $i\in\{1,\dots,j\}$, so by Lemma~\ref{resonant} we have $\partial_\theta g_j \in \Lambda_j$, hence if we first project the equations onto the orthogonal complement of $\Lambda_j$ we have
\[ |\Pi_{j}^{\perp}(I^j(t)-I^j(t_j))| \leq |t-t_j||\partial_\theta f_j|_{2r_j/3,2s_j/3}.  \]
Now since $|t-t_j| < e^m$ and $|\partial_\theta f_j|_{2r_j/3,2s_j/3} \MP e^{-m}\varepsilon$, the previous estimate gives 
\[ |\Pi_{j}^{\perp}(I^j(t)-I^j(t_j))| \MP \varepsilon,  \]
and therefore
\[ |I^j(t)-I^j(t_j)-\Pi_j(I^j(t)-I^j(t_j))| \MP \varepsilon \]
for $t \in [t_j,e^m[ \cap [t_j,\tilde{t}_j[$. 

Finally, note that $\Pi_{n}$ is identically zero, so that the mean value theorem immediately gives $\tilde{t}_n \geq e^m$ and the estimate
\[ |I^n(t)-I^n(t_n)| \MP \varepsilon, \quad t \in [t_n,e^m[, \]
follows easily. This concludes the proof.
\end{proof}

The interpretation of the above proposition is the following: if $\lambda_j$ is the affine subspace passing through $I^j(t_j)$ with direction space $\Lambda_j$, then as long as $I^j(t)$ remains in the domain $\mathcal{B}_{r_j}(\omega_j)$, it is $\varepsilon$-close to $\lambda_j$ for an exponentially long time with respect to $m$. This means that for that interval of time, there is almost no variation of the action components in the direction transversal to $\lambda_j$, so that any potential drift has to occur along that space.  

\subsection{Geometric part.} \label{ss32}

In this section we will give the proof of Theorem~\ref{mainth2} using the method introduced by Niederman in \cite{Nie04} and \cite{Nie07}. Without loss of generality, we will consider only solutions $(\theta(t),I(t))$ starting at time $t_0=0$ and evolving in positive time $t>0$. We will first show that some specific solutions are exponentially stable, but to define them we shall need some extra notations.

\paraga Consider a sequence of linearly independent periodic vectors $(\omega_1,\dots,\omega_n)$, with periods $(T_1,\dots,T_n)$, and two decreasing sequences of real numbers $(r_1,\dots,r_n)$ and $(s_1,\dots,s_n)$ satisfying conditions $(A_j)$, for $j\in\{1,\dots,n\}$. Recall that from Proposition~\ref{lemmeham} we have a transformation 
\[ \Psi_j: \mathcal{D}_{2r_j/3,2s_j/3}(\omega_j) \rightarrow \mathcal{D}_{r_1,s_1}(\omega_1), \quad j \in \{1,\dots,n\},\]
such that $\Psi_j=\Phi_1 \circ \cdots \circ\Phi_j$, where
\[ \Phi_i: \mathcal{D}_{2r_i/3,2s_i/3}(\omega_i) \rightarrow \mathcal{D}_{r_i,s_i}(\omega_i), \quad i \in \{1,\dots,j\},\]
satisfies the estimate $|\Phi_i-\mathrm{Id}|_{2r_i/3,2s_i/3} \PM r_i$. 

By construction, our transformations preserve reality so that
\[ \Phi_i: \T^n\times\mathcal{B}_{2r_i/3}(\omega_i) \rightarrow \T^n\times\mathcal{B}_{r_i}(\omega_i), \quad i \in \{1,\dots,j\},\]
with  $|\Phi_i-\mathrm{Id}|_{2r_i/3} \PM r_i$. In particular, arranging the implicit constant in the previous estimate ensure that the image of $\mathcal{B}_{2r_i/3}(\omega_i)$ under $\Phi_i$ contains the smaller domain $\mathcal{B}_{r_i/3}(\omega_i)$. From now on, we shall simply write
\[ \mathcal{B}_i=\mathcal{B}_{r_i/3}(\omega_i), \quad i\in\{1,\dots,n\}, \]
and for completeness $\mathcal{B}_0=B$.

Given a solution $(\theta(t),I(t))\in B$ starting at time $t_0=0$, we can define inductively the ``averaged" solution $(\theta^i(t),I^i(t))$ for $i\in\{1,\dots,n\}$ by
\[ \Phi_i(\theta^i(t),I^i(t))=(\theta^{i-1}(t),I^{i-1}(t)) \]
as long as $I^{i-1}(t)\in\mathcal{B}_i$, with $(\theta^0(t),I^0(t))=(\theta(t),I(t))$. Moreover, using our estimate on $\Phi_i$ we have
\begin{equation}\label{truc}
|I^i(t)-I^{i-1}(t)| \PM r_i,  \quad i\in\{1,\dots,n\},
\end{equation}
during that time interval.

\paraga We can finally make our definition.

\begin{definition}
Given $r_0>0$ and $m\in\N$, a solution $(\theta(t),I(t))$ of the Hamiltonian~(\ref{Ham}), starting at time $t_0=0$, is said to be {\it restrained} (by $r_0$, up to time $e^m$) if we can find sequences of: 
\begin{itemize}
\item[(1)] radii $(r_1,\dots,r_n)$, with $0<r_n<\cdots<r_1<r_0$;
\item[(2)] widths $(s_1,\dots,s_n)$, with $0<s_n<\cdots<s_1$; 
\item[(3)] independent periodic vectors $(\omega_1,\dots,\omega_n)$, with periods $(T_1,\dots,T_n)$; 
\item[(4)] times $(t_1,\dots,t_n)$, with $0=t_0 \leq t_1 \leq \cdots \leq t_n \leq t_{n+1}=e^m$, 
\end{itemize}
satisfying, for $j\in\{0,\dots,n-1\}$, conditions $(A_{j+1})$ and the following conditions~$(B_j)$ defined by
\begin{equation} \label{Bj}
\begin{cases}
|I^j(t)-I^j(t_j)| < r_j, \quad \ t \in [t_j,t_{j+1}], \\ 
|\nabla h(I^j(t_{j+1}))-\omega_{j+1}|<r_{j+1}.
\end{cases} \tag{$B_j$}
\end{equation}
\end{definition}

Before explaining this definition, we need to make several remarks. First, for $j\in\{0,\dots,n-2\}$ we will see that the first condition of $(B_{j+1})$ is well defined by the second condition of $(B_j)$. Furthermore, for $j\in\{0,\dots,n-1\}$ the last condition in $(B_j)$ implies in particular that the set $\mathcal{B}_{j+1}(\omega_{j+1})$ is non-empty so we may remove this assumption from $(A_{j+1})$. Finally, we can choose the same sequence of widths $(s_1,\dots,s_n)$ for all solutions, therefore we may already fix $s_i \PE s$ with a suitable constant and this simplifies some conditions (for instance, the condition $mT_jr_j \PM s_j$ appearing in $(A_{j})$ will be replaced by $mT_jr_j \PM 1$).

We have chosen the word ``restrained" because for such a solution the actions $I(t)$ (or some properly normalized actions $I^{j}(t)$) are forced to pass close to a resonance at the time $t=t_j$, the multiplicity of which decreases as $j$ increases, and moreover the variation of these (normalized) actions is controlled on each time interval $[t_j,t_{j+1}]$. Hence after the time $t_n$, the actions are in a domain free of resonances and they are easily confined in view of the last part of Proposition~\ref{cor}. This is reminiscent of the original mechanism of Nekhoroshev, but the fact that we consider each solution individually will greatly simplify this geometric part.

\paraga Let us see how the actions of a restrained solution are easily confined for an exponentially long time with respect to $m$. We shall write
\[ \rho_j=r_1+\cdots+r_j, \]
for $j\in\{1,\dots,n\}$.

\begin{proposition} \label{etape1}

Consider a restrained solution $(\theta(t),I(t))$, with an initial action $I(0)\in B_{R/2}$. If 
\begin{itemize}
\item[$(i)$] $\varepsilon \PM r_n$;
\item[$(ii)$] $r_0 \PM R$,
\end{itemize}
then the estimates
\[ |I(t)-I(0)| < (n+1)^2 r_0, \quad 0\leq t< e^m, \]
hold true.
\end{proposition}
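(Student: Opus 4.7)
The plan is to split $[0, e^m)$ into the $n+1$ successive subintervals $[t_j, t_{j+1}]$ for $j = 0, \ldots, n$ (with $t_0 = 0$ and $t_{n+1} = e^m$), and to control $|I(t) - I(0)|$ on each by first bounding the variation of the averaged action $I^j(t)$ delivered by Proposition~\ref{lemmeham}, and then transferring the estimate back to $I = I^0$ through the pointwise inequality~(\ref{truc}). Hypothesis $(ii)$, $r_0 \PM R$, combined with $I(0) \in B_{R/2}$, will keep the whole trajectory inside $B$ so that all Hamiltonian flows and the symplectic transformations $\Phi_i$ remain defined along the way.

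For $j \in \{0, \ldots, n-1\}$, the first clause of $(B_j)$ gives directly $|I^j(t) - I^j(t_j)| < r_j$ on $[t_j, t_{j+1}]$. On the terminal interval $[t_n, e^m)$ I would apply Proposition~\ref{cor} to $H_n = H \circ \Psi_n$: the linear independence of $(\omega_1, \ldots, \omega_n)$ forces the resonance module $\mathcal{M}_n$ to be trivial, so $\Lambda_n = \{0\}$, the projection $\Pi_n$ vanishes identically, and the proposition delivers $|I^n(t) - I^n(t_n)| \MP \varepsilon$, which is absorbed into a multiple of $r_n$ thanks to hypothesis $(i)$.

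Next I would telescope along the breakpoints:
\[ |I(t) - I(0)| \leq \sum_{k=0}^{j-1} |I(t_{k+1}) - I(t_k)| + |I(t) - I(t_j)|, \qquad t \in [t_j, t_{j+1}]. \]
For $k \geq 1$, each increment $|I(\tau) - I(t_k)|$ with $\tau \in [t_k, t_{k+1}]$ decomposes as
\[ |I(\tau) - I^k(\tau)| + |I^k(\tau) - I^k(t_k)| + |I^k(t_k) - I(t_k)|, \]
in which the outer pieces are $\MP \rho_k$ by telescoping~(\ref{truc}), while the middle piece is at most $r_k$ (or $\MP \varepsilon \PM r_n$ when $k = n$); the piece coming from $[0, t_1]$ is bounded by $r_0$ directly from $(B_0)$. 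Summation over $k$, together with $\rho_k \leq n r_0$ and $r_k \leq r_0$, produces a bound of the form $(2Cn^2 + O(n))\, r_0$, which fits into $(n+1)^2 r_0$ once the implicit constant $C$ in~(\ref{truc}) is kept sufficiently small; such a calibration can be arranged by tightening the implicit constants in Proposition~\ref{lemmeham} and in the definition of the domains $\mathcal{B}_i$.

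The main difficulty is not conceptual but careful bookkeeping of constants in order to reach the clean form $(n+1)^2 r_0$. A related detail to check inductively is that $I^{j-1}(t)$ remains inside $\mathcal{B}_{r_j/3}(\omega_j)$, the domain of $\Phi_j$, throughout $[t_j, t_{j+1}]$, so that the averaged action $I^j(t)$ is actually defined; this follows from the frequency-proximity bound $|\nabla h(I^{j-1}(t_j)) - \omega_j| < r_j$ in the second clause of $(B_{j-1})$ combined with the smallness of the variation of $I^{j-1}$ on the preceding interval and the Lipschitz continuity of $\nabla h$ on $B$.
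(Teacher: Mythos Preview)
Your proposal is correct and follows essentially the same route as the paper: split $[0,e^m)$ at the times $t_j$, control the averaged actions $I^j$ on each piece via $(B_j)$ (and Proposition~\ref{cor} on the final piece, using $\Lambda_n=\{0\}$ and hypothesis $(i)$), pass back to $I$ through the telescoping estimate~(\ref{truc}), and then sum. The only difference is bookkeeping: because the inequality in~(\ref{truc}) is written with $\PM$ rather than $\MP$, one has directly $|I^i(t)-I^{i-1}(t)|<r_i$ (no surviving constant), hence $|I(t)-I(t_j)|<2\rho_j+r_j$, and the identity $\sum_{i=1}^{j}(2\rho_i+r_i)+r_0<(j+1)^2 r_0$ gives the clean bound $(n+1)^2 r_0$ without any further calibration of implicit constants.
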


\begin{proof}
First observe that for each $j \in \{1,\dots,n-1\}$, for $t \in [t_j,t_{j+1}]$ we have  
\[ |I(t)-I(t_j)| \leq |I(t)-I^j(t)|+|I^j(t)-I^j(t_j)|+|I^j(t_{j})-I(t_j)|,  \]
so the first part of $(B_j)$ and (\ref{truc}) yields
\begin{equation} \label{p1}
|I(t)-I(t_j)|< 2\rho_j +r_j
\end{equation}  
while for $t\in [0,t_1]$, the first part of $(B_0)$ reads 
\begin{equation}
|I(t)-I(0)| < r_0.  \label{p2}
\end{equation}
Now let $t \in [0,e^m]$, then $t \in [t_j,t_{j+1}]$ for some $j \in \{0,\dots,n\}$ (recall that $t_{n+1}=e^m$), and we will distinguish three cases. 

First assume that $t \in [0,t_1]$, in this case the conclusion follows by~(\ref{p2}) since $(n+1)^2 \geq 1$. Now assume $t \in [t_j,t_{j+1}]$ for some $j \in \{1,\dots,n-1\}$, then we can write
\[ |I(t)-I(0)| \leq |I(t)-I(t_j)| +\sum_{i=0}^{j-1}|I(t_{i+1})-I(t_i)|, \]
and by (\ref{p1}) and (\ref{p2})
\[ |I(t)-I(0)| < \sum_{i=1}^{j}(2\rho_i+r_i)+r_0 <(n+1)^2 r_0, \]
since $r_i<r_0$ for $i\in \{1,\dots,j\}$. Finally, assume that $t \in [t_n,t_{n+1}]$, then we can apply the second inequality of Proposition~\ref{cor} and $(i)$ to estimate
\[ |I^n(t)-I^n(t_n)| \MP \varepsilon< r_n, \]
and so 
\[ |I(t)-I(t_n)|< 2\rho_n +r_n \]
which gives
\[ |I(t)-I(0)| < \sum_{i=1}^{n}(2\rho_i+r_i)+r_0 <(n+1)^2 r_0. \]
To conclude, just note $I(0) \in B_{R/2}$ and $(ii)$ ensures that $I(t)$ remains in $B_R$ for $t<e^m$.
\end{proof}

\paraga Restrained solutions are exponentially stable, and now we will show that this is in fact true for all solutions. However, to use our steepness arguments this will be done quite indirectly, and so it is useful to introduce the following definition.

\begin{definition}
Given $r_0>0$ and $m\in\N$, a solution $(\theta(t),I(t))$ of the Hamiltonian~(\ref{Ham}), starting at time $t_0=0$, is said to be drifting (to $r_0$, before time $e^m$) if there exists a time $t_*$ satisfying  
\[ |I(t_*)-I(0)|=(n+1)^2 r_0, \quad 0<t_*< e^m. \]
\end{definition} 

Of course, this definition makes sense only if $(n+1)^2 r_0<R/2$. In view of Proposition~\ref{etape1}, drifting solutions cannot be restrained. However, we will prove below that if such a drifting solution exists, it has to be restrained under some assumptions on $r_0$, $m$ and $\varepsilon$, which will eventually prove that all solutions are in fact exponentially stable. 

More precisely, assuming the existence of a drifting solution, we will construct a sequence of radii $(r_1,\dots,r_n)$, an increasing sequence of times $(t_1,\dots,t_n)$ and a sequence of linearly independent vectors $(\omega_1,\dots,\omega_n)$, with periods $(T_1,\dots,T_n)$ satisfying, for $j\in\{0,\dots,n-1\}$, assumptions $(A_{j+1})$ and $(B_j)$. All sequences will be built inductively, and we first describe the tools that we shall need. 

\paraga For $j\in\{1,\dots,n\}$, recall that $\Lambda_j$  is the vector space spanned by 
\[ \mathcal{M}_j= \{k\in\Z^n \; | \; k.\omega_i=0, \; i\in\{1,\dots,j\}\}, \] 
and that $\Pi_j$ (resp. $\Pi_{j}^{\perp}$) is the projection onto $\Lambda_j$ (resp. $\Lambda_{j}^{\perp}$). Let us define the integer 
\[ L_j=\sup_{i\in\{1,\dots,j\}}\{|T_i\omega_i|\}\in\N^*, \quad j\in\{1,\dots,n\}. \]
For completeness, we set $\Lambda_0=\R^n$, $L_0=1$ and in this case $\Pi_0$ is nothing but the identity. To construct the sequence of times, we will rely on the fact that our integrable part $h$ belongs to $SDM_{\gamma}^{\tau}(B)$, so that it satisfies the following steepness property (see Appendix~\ref{SDM}).

\begin{lemma} \label{l1}
For $j\in\{0,\dots,n-1\}$, let $\lambda_j$ be any affine subspace with direction $\Lambda_j$, and take $r<1$. Then for any continuous curve $\Gamma : [0,1] \rightarrow \lambda_j \cap B$ with length 
\[ \vert\Gamma(0)-\Gamma(1)\vert =r \PM \gamma L_j^{-\tau},\] 
there exists a time $t_*\in [0,1]$ such that 
\begin{equation*}
\begin{cases}
\vert\Gamma (t)-\Gamma (0)\vert < r, \quad t\in [0,t_*], \\
\left\vert\Pi_j(\nabla h(\Gamma (t_*)))\right\vert \PS r^2. 
\end{cases}
\end{equation*}
\end{lemma}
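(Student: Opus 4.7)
The plan is to verify the conclusion via a first-order Taylor expansion of $\nabla h$ along a straight line lying in $\lambda_j$, using the SDM condition at the initial point $\Gamma(0)$. First note that $\Lambda_j \in G^{L}(n,n-j)$ for some integer $L \EP L_j$: indeed $\Lambda_j^\perp$ is generated by the integer vectors $T_1\omega_1,\dots,T_j\omega_j$, whose $\ell^{1}$-norms are bounded by a fixed multiple of $L_j$. Hence SDM may be invoked for $\Lambda_j$ with that $L$.

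I first dispose of the trivial case: if $|\Pi_j(\nabla h(\Gamma(0)))| \PS r^2$, then $t_*=0$ already fulfils both requirements. Otherwise, assume $|\Pi_j(\nabla h(\Gamma(0)))| < c r^2$ for a small constant $c>0$ to be fixed later. Define
\[ t_* = \inf\bigl\{\,t\in[0,1] \ :\ |\Pi_j(\nabla h(\Gamma(t)))| \geq c r^2\,\bigr\}, \]
and let $\tau_r\in(0,1]$ be the first time at which $|\Gamma(\tau_r)-\Gamma(0)| = r$, which exists by continuity of $\Gamma$ and the hypothesis $|\Gamma(0)-\Gamma(1)|=r$. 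The entire statement will follow once I show $t_* < \tau_r$: for $t\in[0,t_*]$ we then have $|\Gamma(t)-\Gamma(0)|<r$, while $|\Pi_j(\nabla h(\Gamma(t_*)))|=c r^2\PS r^2$ by continuity. To achieve $t_*<\tau_r$ it suffices, again by continuity, to prove that $|\Pi_j(\nabla h(\Gamma(\tau_r)))|$ strictly exceeds $c r^2$.

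The core of the argument is a Taylor expansion along the segment from $\Gamma(0)$ to $\Gamma(\tau_r)$. Write $\Gamma(\tau_r)-\Gamma(0)=r u$ with $u\in\Lambda_j$ (using that $\lambda_j$ has direction $\Lambda_j$) and $\Vert u\Vert \EP 1$, and set $v(s)=\Pi_j\nabla h(\Gamma(0)+s u)$ for $s\in[0,r]$. Then $v'(0)=\Pi_j\mathrm{Hess}(h)(\Gamma(0))\,u$; under the standing assumption $\Vert v(0)\Vert \MP c r^2$, which is bounded above by $\gamma L^{-\tau}$ once $c$ and the implicit constant of $r\PM\gamma L_j^{-\tau}$ are both taken small, SDM applied at $\Gamma(0)$ in the direction $u$ gives $\Vert v'(0)\Vert \PS \gamma L_j^{-\tau}$. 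The uniform bound on the third derivative of $h$ yields the Taylor remainder estimate $\Vert v(r)-v(0)-r v'(0)\Vert \MP r^2$, and therefore
\[ \Vert v(r)\Vert \;\geq\; r\Vert v'(0)\Vert - \Vert v(0)\Vert - \Vert v(r)-v(0)-r v'(0)\Vert \;\PS\; r\gamma L_j^{-\tau}-C r^2 \]
for a fixed $C$. Since $r\PM\gamma L_j^{-\tau}$ gives $r\gamma L_j^{-\tau}\geq r^2/C_0$, choosing $C_0$ small (compared to $C$) makes the first term dominant and yields $\Vert v(r)\Vert \PS r^2$, i.e., $|\Pi_j(\nabla h(\Gamma(\tau_r)))| \PS r^2$ with a margin above $c r^2$, as required. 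The curve $\Gamma$ itself enters only through its two endpoints; the main difficulty is the bookkeeping of the small constants $c$ and $C_0$ to ensure compatibility with the Hessian lower bound supplied by SDM and with the cubic term controlled by $M$.
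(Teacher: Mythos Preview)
Your proof is correct and follows essentially the same approach as the paper. The paper proves Lemma~\ref{l1} by reducing it to Proposition~\ref{steep} in Appendix~\ref{SDM}, and the proof of that proposition is precisely the Taylor-expansion argument you give: case-split on whether $|\Pi_j(\nabla h(\Gamma(0)))|$ is already large, and if not, invoke the SDM Hessian lower bound at $\Gamma(0)$ together with the third-derivative bound $M$ to show the projected gradient must be large once the curve has moved a distance $r$. The only cosmetic difference is that the paper takes $t_*$ to be the first time the curve reaches distance $r$ from $\Gamma(0)$ and then shows the gradient there is large, whereas you define $t_*$ as the first time the gradient reaches $cr^2$ and argue this happens strictly before that exit time; your formulation has the small advantage of directly yielding the strict inequality $|\Gamma(t)-\Gamma(0)|<r$ on $[0,t_*]$.
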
 

\begin{proof}
For any $j\in\{1,\dots,n-1\}$, the orthogonal complement of $\Lambda_j$ is spanned by $\omega_1, \dots, \omega_j$, hence by the integer vectors $T_1\omega_1, \dots, T_j\omega_j$, so that $\Lambda_j$ belongs to $G^{L_j}(n,n-j)$ with the integer $L_j$ defined above. Therefore one can apply the Proposition~\ref{steep} in Appendix~\ref{SDM} to get the required properties (note that here we are using the supremum norm instead of the Euclidean norm, so the implicit constants are different).

For $j=0$, $\Gamma : [0,1] \rightarrow B=B \cap \R^n$, but since the orthogonal complement of $\R^n$ is trivial one can take $L_0=1$. 
\end{proof}

\paraga To construct the sequence of periodic vectors, we shall use the following lemma, which is a straightforward application of Dirichlet's theorem on simultaneous Diophantine approximation (see \cite{Cas57}).

\begin{lemma} \label{l2}
Given any vector $v \in \R^n$ and any real number $Q>0$, there exists a $T$-periodic vector $\omega$ satisfying
\[ |v-\omega| \leq T^{-1}Q^{-\frac{1}{n-1}}, \quad |v|^{-1} \leq T \leq Q|v|^{-1}. \]
\end{lemma}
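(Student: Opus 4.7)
The statement is a textbook application of Dirichlet's simultaneous Diophantine approximation theorem; the only work is to reduce the $n$-dimensional problem to the standard form and keep track of the normalization so that the requested two-sided bound on $T$ comes out correctly.

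First, I would dispose of the degenerate case $v=0$ (where the left-hand inequality $|v|^{-1}\leq T$ is vacuous) and otherwise, permuting coordinates if necessary, assume that the last component realizes the supremum norm, $v_n=\pm|v|\neq 0$. Dividing through by $v_n$, I would set $\alpha_i:=v_i/v_n$ for $i\in\{1,\ldots,n-1\}$, so that the problem of approximating $v$ componentwise by $\omega=k/T$ reduces to the problem of simultaneously approximating the $\alpha_i$ by rationals with a common denominator $q$.

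Next I would invoke Dirichlet's theorem in the following classical form: for any $Q>0$ and any reals $\alpha_1,\ldots,\alpha_{n-1}$, there exist an integer $q$ with $1\leq q\leq Q$ and integers $p_1,\ldots,p_{n-1}$ such that
\[ \max_{1\leq i\leq n-1}|q\alpha_i-p_i|\leq Q^{-\frac{1}{n-1}}. \]
Then I would set $T:=q/|v|$ and define $k\in\Z^n$ by $k_i:=p_i$ for $i<n$ and $k_n:=\mathrm{sgn}(v_n)\,q$, and finally $\omega:=k/T$. By construction $T\omega=k\in\Z^n$, so $\omega$ is $T$-periodic (in the sense that $T$ is \emph{a} period of $\omega$, not necessarily the minimal one, which is all that is required in the sequel). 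The bounds $1\leq q\leq Q$ translate directly into $|v|^{-1}\leq T\leq Q|v|^{-1}$.

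It remains to verify the approximation estimate. The $n$-th coordinate of $v-\omega$ vanishes by the very choice of $T$, since $\omega_n=k_n/T=\mathrm{sgn}(v_n)\,q/(q/|v|)=v_n$. For $i<n$, one computes
\[ |v_i-\omega_i|=T^{-1}|Tv_i-p_i|=T^{-1}|q\alpha_i-p_i|\leq T^{-1}Q^{-\frac{1}{n-1}}, \]
so the supremum norm satisfies $|v-\omega|\leq T^{-1}Q^{-1/(n-1)}$. The main (and only) obstacle is purely bookkeeping: one must align the quantitative statement of Dirichlet's theorem with the sup-norm used in the paper and pick the normalization $T=q/|v|$ rather than $T=q$, which is exactly what couples the two bounds on $T$ to the quality $Q^{-1/(n-1)}$ of the approximation.
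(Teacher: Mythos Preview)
Your proof is essentially identical to the paper's, which also reorders so that one component equals $\pm|v|$, applies Dirichlet's theorem to the remaining $n-1$ normalized components, and sets $T=q/|v|$. One minor slip: when $v_n<0$ the identity $Tv_i=q\alpha_i$ fails (since $\alpha_i=v_i/v_n$ gives $q\alpha_i=-Tv_i$), so you should take $k_i=\mathrm{sgn}(v_n)\,p_i$ rather than $k_i=p_i$; the paper sidesteps this by writing $v=|v|(\pm1,x)$ and normalizing by $|v|$ rather than by $v_n$.
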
 

\begin{proof}
Fix any real number $Q>0$. We can write the vector $v$, up to re-ordering its components, as $v=|v|(\pm 1,x)$ with $x \in \R^{n-1}$, and it will be enough to approximate $x$ by a periodic vector. By a theorem of Dirichlet, we can find an integer $q$, with $1\leq q<Q$, such that
\[ |qx-p| \leq Q^{-\frac{1}{n-1}}, \]
for some $p\in\Z^{n-1}$. The vector $q^{-1}p$ is trivially $q$-periodic, hence the vector $\omega=|v|(\pm 1,q^{-1}p)$ is $T$-periodic, with $T=|v|^{-1}q$, therefore
\[ |v|^{-1} \leq T \leq Q|v|^{-1}, \]  
and we have the estimate
\[ |v-\omega|\leq T^{-1}|qx-p| \leq T^{-1}Q^{-\frac{1}{n-1}}. \]   
\end{proof}

\paraga Now we can finally prove that drifting solutions are in fact restrained under some assumptions. This will be done inductively, and for technical reasons we separate the first step (Proposition~\ref{etape2}) from the general inductive step (Proposition~\ref{etape3}).

\begin{proposition}\label{etape2}
Let $(\theta(t),I(t))$ be a drifting solution. If $r_0 \PM \gamma$, then there exist a time $t_1$, a $T_1$-periodic vector $\omega_1$ and $r_1 \EP T_1^{-1}\varepsilon^{a_1}$ for some constant $a_1$, satisfying $(B_0)$. Moreover, we have the estimates
\begin{equation}\label{estimper}
1 \MP T_1 \MP \varepsilon^{-a_{1}(n-1)}r_{0}^{-2}, \quad 1 \leq L_1 \MP \varepsilon^{-a_{1}(n-1)}r_{0}^{-2}.
\end{equation} 
\end{proposition}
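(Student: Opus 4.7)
The plan is to combine the steepness property of $h$ from Lemma~\ref{l1} with Dirichlet's simultaneous approximation from Lemma~\ref{l2}: the first produces a time at which $\nabla h(I)$ is bounded below, the second approximates that gradient by a periodic vector.

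First I would exploit the drifting hypothesis to produce a stopping time. Let $t_1' = \inf\{t>0 \mid |I(t)-I(0)| \geq r_0\}$. Since the solution is drifting, $|I(t_*)-I(0)|=(n+1)^2 r_0 > r_0$, so $t_1'$ is finite, $t_1' \leq t_* < e^m$, and $|I(t_1')-I(0)|=r_0$ by continuity. The reparametrized curve $\Gamma(u):=I(u t_1')$, $u \in [0,1]$, takes values in $B$ (since $I(0) \in B_{R/2}$ and the drifting definition forces $(n+1)^2 r_0 < R/2$), with $|\Gamma(0)-\Gamma(1)|=r_0$.

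Next I would apply Lemma~\ref{l1} with $j=0$, so that $\Lambda_0=\R^n$, $L_0=1$ and $\Pi_0=\mathrm{Id}$. The assumption $r_0 \PM \gamma$ coincides with $r_0 \PM \gamma L_0^{-\tau}$, so the lemma furnishes $u_* \in [0,1]$ with $|\Gamma(u)-\Gamma(0)| < r_0$ for $u \in [0,u_*]$ and with $|\nabla h(\Gamma(u_*))| \PS r_0^2$. Setting $t_1 := u_* t_1'$ yields the first line of $(B_0)$ together with the key lower bound
\[ |\nabla h(I(t_1))| \PS r_0^2. \]
Then I would invoke Lemma~\ref{l2} applied to $v := \nabla h(I(t_1))$ with $Q := \varepsilon^{-a_1(n-1)}$ (so $Q^{-1/(n-1)} = \varepsilon^{a_1}$), producing a $T_1$-periodic vector $\omega_1$ such that
\[ |\nabla h(I(t_1)) - \omega_1| \leq T_1^{-1} \varepsilon^{a_1}, \qquad |v|^{-1} \leq T_1 \leq Q|v|^{-1}. \]
Defining $r_1$ as a fixed multiple of $T_1^{-1}\varepsilon^{a_1}$ slightly larger than the constant coming from the previous estimate gives $|\nabla h(I(t_1))-\omega_1| < r_1$ and $r_1 \EP T_1^{-1}\varepsilon^{a_1}$, which completes the second line of $(B_0)$.

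Finally the two-sided estimates in~(\ref{estimper}) will follow directly from the Dirichlet bounds. The upper bound $|v| \leq M$ gives $T_1 \geq M^{-1} \MP 1$, while the lower bound $|v| \PS r_0^2$ gives $T_1 \MP Q r_0^{-2} = \varepsilon^{-a_1(n-1)}r_0^{-2}$. For $L_1 = |T_1\omega_1|$, non-vanishing of $\omega_1$ (valid since $|\omega_1| \geq |v| - r_1 > 0$ for $\varepsilon$ small) forces $L_1 \geq 1$, while $L_1 \leq T_1(|v| + r_1) \MP T_1$ gives the matching upper bound. The hard part here is really negligible: the only technical care points are verifying that $\Gamma$ stays inside $B$ (automatic from the drifting condition) and ensuring $r_1$ yields a strict inequality. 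The exponent $a_1$ remains a free parameter at this stage; it will be pinned down later when the drifting solution is propagated through the inductive step in Proposition~\ref{etape3} and matched to the smallness conditions needed in the final balance.
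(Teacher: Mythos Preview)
Your proof is correct and follows essentially the same approach as the paper: both use the drifting hypothesis to find a first time where $|I(t)-I(0)|=r_0$, apply Lemma~\ref{l1} with $j=0$ to extract $t_1$ with $|\nabla h(I(t_1))|\PS r_0^2$, and then invoke Lemma~\ref{l2} with $Q=\varepsilon^{-a_1(n-1)}$ to produce $\omega_1$, $T_1$, and $r_1$. Your explicit use of the first-exit time and the reparametrization to $[0,1]$ is slightly more careful than the paper's presentation, but the argument is otherwise identical.
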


\begin{proof}
We need to construct $t_1$, $\omega_1$ and $r_1$ satisfying
\begin{itemize}
\item[$(a)$] $|I(t)-I(0)| < r_0,\quad t \in [0,t_1]$; 
\item[$(b)$] $|\nabla h(I(t_1))-\omega_1| < r_1$,
\end{itemize}
and the estimate~(\ref{estimper}). Consider the curve 
\[ \Gamma_1: t \in [0,t_*] \longmapsto I(t) \in B \subseteq \R^n. \]
Since we have a drifting solution, we can select $t_0^* \in [0,t_*]$ such that 
\[ |\Gamma_1(t_0^*)-\Gamma_1(0)|=r_0.\]  
Now using the fact that $h \in SDM_{\gamma}^{\tau}(B)$ and $r_0 \PM \gamma$ (recall that $L_0=1$), we can apply Lemma~\ref{l1} (the case $j=0$) to the curve $\Gamma_1$ restricted to $[0,t_0^*]$ to find a time $t_1 \in [0,t_0^*]$ for which
\begin{equation} \label{steep0}
\begin{cases}
|I(t)-I(0)| < r_0, \quad t\in [0,t_1], \\
\vert\nabla h(I(t_1)) \vert \PS r_0^2.
\end{cases}
\end{equation} 
The first inequality of~(\ref{steep0}) gives $(a)$.

Now choose $Q_1=\varepsilon^{-a_1(n-1)}$, for some constant $a_1$ yet to be chosen, and apply Lemma~\ref{l2} to approximate $\nabla h(I(t_1))$ by a $T_1$-periodic vector $\omega_1$, that is
\begin{equation}\label{dir1}
|\nabla h(I(t_1))-\omega_1| \leq T_{1}^{-1}Q_1^{-\frac{1}{n-1}}=T_{1}^{-1}\varepsilon^{a_1}. 
\end{equation}
Moreover, since
\[ r_0^2 \MP |\nabla h(I(t_1))| \MP 1, \]
the period $T_1$ satisfies the following estimate
\begin{equation}\label{der1}
1 \MP T_1 \MP \varepsilon^{-a_1(n-1)} r_{0}^{-2}.
\end{equation}  
Now choose $r_1 \EP T_{1}^{-1}\varepsilon^{a_1}$ so that~(\ref{dir1}) gives $(b)$. Finally, as $L_1=|T_1\omega_1|$ and
\[ |\omega_1|\leq |\nabla h(I(t_1))|+|\nabla h(I(t_1))-\omega_1|\MP 1  \]
we obtain
\begin{equation}\label{der2}
1 \leq L_1 \MP \varepsilon^{-a_1(n-1)} r_{0}^{-2}
\end{equation} 
where the lower bound follows from the fact that $T_1\omega_1$ is a non-zero integer vector. The estimates~(\ref{der1}) and~(\ref{der2}) give~(\ref{estimper}).
\end{proof}

\begin{proposition} \label{etape3}
Let $(\theta(t),I(t))$ be a drifting solution, $j \in \{1,\dots,n-1\}$ and assume that there exist sequences $(t_1,\dots,t_j)$, $(\omega_1,\dots,\omega_j)$ linearly independent and $(r_1, \dots, r_j)$, satisfying assumptions $(A_{i})$ and $(B_{i-1})$, for $i \in \{1,\dots,j\}$. Assume also that
\begin{itemize}
\item[$(i)$] $r_j \PM \min\{r,s\}$;
\item[$(ii)$] $mT_j\varepsilon\PM r_1r_j$;
\item[$(iii)$] $mT_jr_j \PM 1$ ;
\item[$(iv)$] $\left(T_jr_jL_j^{-1}\right)^\tau \PM \gamma L_j^{-\tau}$;
\item[$(v)$] $\left(T_jr_jL_j^{-1}\right)^\tau \PM r_j$;
\item[$(vi)$] $\varepsilon \PM \left(T_jr_jL_j^{-1}\right)^{2\tau}$;
\item[$(vii)$] $r_1 \PM r_0^2$.
\end{itemize}
Then there exist a time $t_{j+1}$, a $T_{j+1}$-periodic vector $\omega_{j+1}$ and $r_{j+1} \EP T_{j+1}^{-1}\varepsilon^{a_{j+1}}$ for some constant $a_{j+1}$, satisfying $(A_{j+1})$ and $(B_j)$. Moreover, we have the estimates
\begin{equation}\label{estimper2}
1 \MP T_{j+1} \MP \varepsilon^{-a_{j+1}(n-1)}r_{0}^{-2}, \quad 1 \leq L_{j+1} \MP \max_{i\in\{1,\dots,j+1\}}\{\varepsilon^{-a_i(n-1)}\}r_0^{-2},
\end{equation}  
and if
\begin{itemize}
\item[$(viii)$] $r_{j+1} \PM \left(T_jr_{j}L_j^{-1}\right)^{2\tau}$,
\end{itemize}
then $\omega_{j+1}$ is linearly independent of $(\omega_1,\dots,\omega_j)$.
\end{proposition}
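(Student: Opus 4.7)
The plan is to mimic the proof of Proposition~\ref{etape2} at level $j$: work in the averaged coordinates provided by Proposition~\ref{lemmeham}, use steepness (Lemma~\ref{l1}) along the resonance subspace $\Lambda_j$, then apply Dirichlet (Lemma~\ref{l2}) to produce $\omega_{j+1}$. Concretely: (a) build a curve in $\lambda_j$ tracking the projected dynamics; (b) use the drift to force that curve to traverse a specific length $r \EP (T_j r_j L_j^{-1})^\tau$; (c) apply Lemma~\ref{l1} to get $|\Pi_j \nabla h|\PS r^2$; (d) apply Lemma~\ref{l2} to extract $\omega_{j+1}$ and define $r_{j+1}\EP T_{j+1}^{-1}\varepsilon^{a_{j+1}}$; (e) verify $(A_{j+1})$, $(B_j)$, the estimates (\ref{estimper2}), and under $(viii)$ the linear independence.

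For (a)--(b), Proposition~\ref{cor} gives that $I^j(t)$ stays within $\MP\varepsilon$ of $\lambda_j = I^j(t_j)+\Lambda_j$ as long as it remains in $\mathcal{B}_{2r_j/3}(\omega_j)$ and $t < e^m$, so the projected curve
\[ \Gamma_{j+1}(t) = I^j(t_j) + \Pi_j\bigl(I^j(t)-I^j(t_j)\bigr) \]
lies in $\lambda_j \cap B$ and differs from $I(t)$ by $\MP\rho_j+\varepsilon$ in view of (\ref{truc}). A Proposition~\ref{etape1}-type bound based on $(B_{i-1})$ for $i\leq j$ yields $|I(t)-I(0)|\leq(j+1)^2 r_0$ on $[0,t_j]$, so $t_j<t_*$ and $|I(t_*)-I(t_j)|\PS r_0$. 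Combined with $(v)$ (which gives $r\PM r_j$), $(vii)$, and the error estimates above, this lets us extract a smallest time $t_j^*\in(t_j,t_*]$ at which $|\Gamma_{j+1}(t_j^*)-\Gamma_{j+1}(t_j)|=r$.

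For (c)--(d), hypothesis $(iv)$ makes $r$ admissible for Lemma~\ref{l1}; applied to $\Gamma_{j+1}$ on $[t_j,t_j^*]$, it produces a time $t_{j+1}$ with $|\Gamma_{j+1}(t)-\Gamma_{j+1}(t_j)|<r$ on $[t_j,t_{j+1}]$ and $|\Pi_j\nabla h(\Gamma_{j+1}(t_{j+1}))|\PS r^2$. Transferring back from $\Gamma_{j+1}$ to $I^j$ via $(v)$--$(vi)$ (so $\varepsilon\MP r^2\MP r$) and $|\partial^2 h|\leq M$ yields the first half of $(B_j)$ and $|\Pi_j\nabla h(I^j(t_{j+1}))|\PS r^2$. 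A triangle-inequality argument propagating $|\omega_j|\PS r_0^2$ inductively (ultimately from the $j=0$ steepness estimate, together with $(vii)$) then gives $|\nabla h(I^j(t_{j+1}))|\PS r_0^2$. Lemma~\ref{l2} with $Q_{j+1}=\varepsilon^{-a_{j+1}(n-1)}$ now delivers $\omega_{j+1}$ and $T_{j+1}$, and setting $r_{j+1}\EP T_{j+1}^{-1}\varepsilon^{a_{j+1}}$ gives the second half of $(B_j)$ and (\ref{estimper2}). The first line of $(A_{j+1})$ is just $(i)$--$(iii)$; the inclusion $\mathcal{D}_{r_{j+1},s_{j+1}}(\omega_{j+1})\subseteq\mathcal{D}_{2r_j/3,2s_j/3}(\omega_j)$ follows from $|\omega_{j+1}-\omega_j|\PM r_j$, itself a consequence of $I^j(t_{j+1})\in\mathcal{B}_{r_j}(\omega_j)$ and $\omega_{j+1}\approx\nabla h(I^j(t_{j+1}))$.

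Finally, under $(viii)$, the $\omega_i$ for $i\leq j$ all lie in $\Lambda_j^\perp$, so $\Pi_j\bigl(\sum_i c_i\omega_i\bigr)=0$ for any real combination, whereas $|\Pi_j\omega_{j+1}|\geq|\Pi_j\nabla h(I^j(t_{j+1}))|-r_{j+1}\PS r^2-r_{j+1}>0$ by $(viii)$, which forces independence. The principal technical obstacle I anticipate is book-keeping: each of the eight hypotheses plays a precise role in a single chain of estimates running $I\to I^j\to\Gamma_{j+1}$ and back, and the choice $r\EP(T_jr_jL_j^{-1})^\tau$ must simultaneously satisfy the admissibility condition of Lemma~\ref{l1}, leave sufficient room for the Dirichlet approximation, respect the nested domain inclusions implicit in $(A_{j+1})$, and make $|\Pi_j\omega_{j+1}|$ strictly positive.
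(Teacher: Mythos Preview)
Your plan matches the paper's proof in overall structure: the curve $\Gamma_{j+1}$, the scale $r \EP (T_j r_j L_j^{-1})^\tau$, the application of Lemma~\ref{l1} followed by Lemma~\ref{l2}, the lower bound $|\nabla h(I^j(t_{j+1}))|\PS r_0^2$ via comparison with the $j=0$ step and condition $(vii)$, the independence argument through $\Pi_j\omega_{j+1}\neq 0$, and the domain inclusion via $|\omega_{j+1}-\omega_j|\MP r_j$ are all exactly as in the paper.

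There is, however, a genuine gap in step (b). You assert that one can extract $t_j^*\in(t_j,t_*]$ directly from the drift estimate $|I(t_*)-I(t_j)|\PS r_0$. But the curve $\Gamma_{j+1}$---and indeed the averaged action $I^j(t)$ itself---is only defined as long as $I^j(t)$ remains in the nearly-periodic domain, that is up to the escape time $\tilde t_j$; nothing in your hypotheses guarantees $\tilde t_j\geq t_*$. If $I^j$ leaves the domain before the projected curve has covered length $r$, your argument collapses: you cannot evaluate $\Gamma_{j+1}$ at $t_*$, and the drift bound on $I(t_*)$ is of no use. The paper resolves this with an explicit two-case Claim. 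In the case $\tilde t_j\leq e^m$, one exploits the very definition of escape: at $\tilde t_j$ one has $|\nabla h(I^j(\tilde t_j))-\omega_j|\EP r_j$ with a \emph{large} implicit constant (this is precisely the point of Remark~\ref{defdomain}), whereas $|\nabla h(I^j(t_j))-\omega_j|\MP r_j$ with a smaller one, forcing $|I^j(\tilde t_j)-I^j(t_j)|\PS r_j$; since the transverse component is $\MP\varepsilon$ by Proposition~\ref{cor}, and $\varepsilon\PM r_j$ via $(v)$--$(vi)$, the projection onto $\Lambda_j$ already exceeds $r$ at time $\tilde t_j$. Only in the complementary case $\tilde t_j>e^m$ is the drifting hypothesis invoked, and even there the paper argues by contradiction rather than directly. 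This case split is the technical crux of the argument and must be supplied.
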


\begin{proof}
First note that for $j=1$, we do not require that $t_1$, $\omega_1$ and $r_1$ satisfy $(A_1)$ since this is implied by the conditions $(i),(ii)$ and $(iii)$, and for $j>1$, the same conditions reduce assumption $(A_{j+1})$ to the inclusion of real domains $\mathcal{B}_{r_{j+1}}(\omega_{j+1}) \subseteq \mathcal{B}_{2r_j/3}(\omega_j)$ (recall that by condition $(B_{j-1})$ these domains are non-empty, and that we have already fixed $s_j \PE s$).

Therefore, we need to construct $t_{j+1}$, $\omega_{j+1}$ and $r_{j+1}$ satisfying 
\begin{itemize}
\item[$(a)$] $|I^j(t)-I^j(t_j)| < r_j,\quad t \in [t_j,t_{j+1}]$;
\item[$(b)$] $|\nabla h(I^j(t_{j+1}))-\omega_{j+1}| < r_{j+1}$;
\item[$(c)$] $\omega_{j+1}$ is independent of $(\omega_1,\dots,\omega_j)$;
\item[$(d)$] $\mathcal{B}_{r_{j+1}}(\omega_{j+1}) \subseteq \mathcal{B}_{2r_j/3}(\omega_j)$,
\end{itemize}
and the estimates~(\ref{estimper2}).

Let $\tilde{t}_j$ be the maximal time of existence within $\mathcal{B}_j$ of the solution $I^j(t)$ starting at $I^j(t_j)$. Since $(A_j)$ is satisfied, we can apply Proposition~\ref{cor} and for $t \in [t_j,\tilde{t}_j] \cap [t_j,e^m]$, we have
\begin{equation} 
|I^j(t)-I^j(t_j)-\Pi_{j}(I^j(t)-I^j(t_j))|\MP \varepsilon. \label{main}
\end{equation} 
Now consider the curve
\[ \Gamma_{j+1}: t \in [t_j,\tilde{t}_j] \cap [t_j,e^m] \longmapsto I^j(t_j)+\Pi_j(I^j(t)-I^j(t_j)) \in \lambda_j \cap B,\]
where $\lambda_j$ is the affine subspace $I^j(t_j)+\Lambda_j$.

\bigskip

\textit{Claim: there exists a time $t_j^* \in [t_j,\tilde{t}_j] \cap [t_j,e^m]$ such that
\[ |\Gamma_{j+1}(t_j^*)-\Gamma_{j+1}(t_j)|=|\Pi_j(I^j(t_j^*)-I^j(t_j))|= \left(T_jr_jL_j^{-1}\right)^\tau. \]}

Let us prove the claim. We have to distinguish two cases.

\bigskip

\textit{First case: $\tilde{t}_j \leq e^m$.}
We have 
\[ |\nabla h(I^j(t_j))-\omega_j| \leq |\nabla h(I^j(t_j))-\nabla h(I^{j-1}(t_j))| + |\nabla h(I^{j-1}(t_j))-\omega_j|,  \]
and therefore
\[ |\nabla h(I^j(t_j))-\omega_j| \MP r_j,  \]
while by definition, 
\[ |\nabla h(I^j(\tilde{t}_j))-\omega_j| \EP r_j \]
with a sufficiently {\it larger} implicit constant (see Remark~\ref{defdomain}). Hence
\[ |\nabla h(I^j(\tilde{t}_j))-\nabla h(I^j(t_j))| \SP r_j, \] 
and this implies
\begin{equation} \label{ineq}
|I^j(\tilde{t}_j)-I^j(t_j)| \SP r_j. 
\end{equation}
But conditions $(v)$ and $(vi)$ give in particular
\[ \varepsilon \PM r_j, \]
so that~(\ref{main}) and~(\ref{ineq}) yields
\[ |\Pi_j(I^j(\tilde{t}_j)-I^j(t_j))| \SP r_j. \]
Now using $(v)$ again, this gives 
\[ |\Pi_j(I^j(\tilde{t}_j)-I^j(t_j))| \SP \left(T_jr_jL_j^{-1}\right)^\tau,  \]
and so we can certainly find a time $t_j^* \in [t_j,\tilde{t}_j]$ such that
\[|\Pi_j(I^j(t_j^*)-I^j(t_j))|= \left(T_jr_jL_j^{-1}\right)^\tau.\]

\bigskip

\textit{Second case: $\tilde{t}_j > e^m$.}
We will first prove that $t_* \in [t_j,e^m]$. Indeed, otherwise $t_*$ belongs to $[t_k,t_{k+1}]$ for some $k\in\{0,\dots,j-1\}$ and we can write
\begin{equation} \label{contra}
|I(t_*)-I(0)| \leq |I(t_*)-I(t_k)| + \sum_{i=0}^{k-1}|I(t_{i+1})-I(t_{i})|.   
\end{equation}
Each term of the right-hand side of~(\ref{contra}) is easily estimated: using $(B_i)$ for $i \in \{0,\dots,k-1\}$ we have
\[ |I^i(t_{i+1})-I^i(t_i)| < r_i, \quad |I^k(t_*)-I^k(t_k)| < r_k,  \]
which implies, by the triangle inequality and the estimate~(\ref{truc})
\[ |I(t_{i+1})-I(t_i)| < 2\rho_i +r_i, \quad |I(t_*)-I(t_k)| < 2\rho_k +r_k. \]
Moreover, 
\[ |I(t_1)-I(0)| < r_0, \]
hence we find
\[ |I(t_*)-I(0)| < \sum_{i=1}^{k} (2\rho_i +r_i)+r_0 < (n+1)^2 r_0,\]
which of course contradicts the definition of our drifting time $t_*$. 

Now to prove the claim, we argue by contradiction and suppose that 
\[ |\Pi_j(I^j(t)-I^j(t_j))|<\left(T_jr_jL_j^{-1}\right)^\tau, \quad t \in [t_j,e^m].\]
Since $t_*\in [t_j,e^m]$, we can use the previous inequality together with the estimate~(\ref{main}) and both conditions $(v)$ and $(vi)$ to first obtain
\[ |I^j(t_*)-I^j(t_j)| < r_j, \] 
and then with the triangle inequality
\[ |I(t_*)-I(t_j)| < 2\rho_j +r_j. \] 
Now, as the argument above, writing
\begin{equation*} 
|I(t_*)-I(0)| \leq |I(t_*)-I(t_j)| + \sum_{i=0}^{j-1}|I(t_{i+1})-I(t_{i})|   
\end{equation*}
we find the same contradiction on the time $t_*$, which completes the proof of the claim.

\bigskip

Now consider the restriction of the curve $\Gamma_{j+1}$ on the interval $[t_j,t_j^*]$. Using our claim together with conditions $(iv)$ and $(v)$, we can apply Lemma~\ref{l1} to find a time $t_{j+1} \in [t_j,t_j^*]$ such that
\begin{equation} \label{steepj}
\begin{cases}
|\Pi_j(I^j(t)-I^j(t_j))| < \left(T_jr_jL_j^{-1}\right)^\tau, \quad t\in [t_j,t_{j+1}], \\
\left\vert\Pi_j(\nabla h(\Gamma_{j+1}(t_{j+1})))\right\vert \PS \left(T_jr_jL_j^{-1}\right)^{2\tau}.
\end{cases}
\end{equation}
The first inequality of~(\ref{steepj}), together with~(\ref{main}) and conditions $(v)$ and $(vi)$ give
\[ |I^j(t)-I^j(t_j)| < r_j \]
for $t\in [t_j,t_{j+1}]$, hence $(a)$ is verified. Now as in the first step, choose $Q_{j+1}=\varepsilon^{-a_{j+1}(n-1)}$ for some constant $a_{j+1}$ to be chosen later, and apply Lemma~\ref{l2} to approximate $\nabla h(I^j(t_{j+1}))$ by a $T_{j+1}$-periodic vector $\omega_{j+1}$, that is
\begin{equation}\label{dir2}
|\nabla h(I^j(t_{j+1}))-\omega_{j+1}| \leq T_{j+1}^{-1} Q_{j+1}^{-\frac{1}{n-1}}= T_{j+1}^{-1}\varepsilon^{a_{j+1}}. 
\end{equation}
Let $r_{j+1} \EP T_{j+1}^{-1}\varepsilon^{a_{j+1}}$ so that $(b)$ is verified by~(\ref{dir2}). To estimate the period $T_{j+1}$ and the number $L_j$, we need a lower bound for $|\nabla h(I^j(t_{j+1}))|$ and we will use the fact that we have such a lower bound for $|\nabla h(I(t_1)|$ (see the second inequality of~(\ref{steep0})). First note that one has easily
\[ |I^j(t_{j+1})-I(t_1)| \MP r_1, \]
since $r_i<r_1$ for $i\in\{1,\dots,j\}$, and therefore
\[ |\nabla h(I^j(t_{j+1}))-\nabla h(I(t_1))| \MP r_1, \]
so choosing properly the constant in the condition $(vii)$ we can ensure that
\[ |\nabla h(I^j(t_{j+1}))-\nabla h(I(t_1))| \PM r_0^2\]
and hence
\begin{equation}\label{estimategradient}
|\nabla h(I^j(t_{j+1}))| \geq |\nabla h(I(t_1)| - |\nabla h(I^j(t_{j+1}))-\nabla h(I(t_1))| \PS r_0^2. 
\end{equation}
By Lemma~\ref{l2}, this gives the estimate 
\begin{equation} \label{estimateperiod}
1 \MP T_{j+1} \MP\varepsilon^{-a_{j+1}(n-1)}r_{0}^{-2}. 
\end{equation}
Now as $|\omega_j|\MP 1$ this easily implies that
\begin{equation}\label{estima}
1 \leq L_{j+1} \MP \max_{i\in\{1,\dots,j+1\}}\{\varepsilon^{-a_i(n-1)}\}r_0^{-2}.
\end{equation}
The estimates~(\ref{estimateperiod}) and~(\ref{estima}) give~(\ref{estimper2}).

Next having built $r_{j+1}$, we need to check that $\omega_{j+1}$ is independent of $(\omega_1,\dots,\omega_j)$. First, by using the mean value theorem, the estimate~(\ref{main}) and our condition $(vi)$, we have
\begin{equation*}
|\nabla h(I^j(t_{j+1}))-\nabla h(\Gamma_{j+1}(t_{j+1}))| \PM \left(T_jr_jL_j^{-1}\right)^{2\tau},
\end{equation*}
and together with the second estimate of~(\ref{steepj}), this gives
\begin{equation} \label{indep}
|\Pi_j(\nabla h(I^j(t_{j+1})))| \PS \left(T_jr_jL_j^{-1}\right)^{2\tau}.
\end{equation}
Furthermore, using~(\ref{dir2}) 
\begin{equation*}
|\Pi_j(\nabla h(I^j(t_{j+1}))-\omega_{j+1})| \leq |\nabla h(I^j(t_{j+1}))-
\omega_{j+1}| \PM r_{j+1} 
\end{equation*}
hence with $(viii)$, we get
\begin{equation}\label{indep2}
|\Pi_j(\nabla h(I^j(t_{j+1}))-\omega_{j+1})| \PM \left(T_jr_jL_j^{-1}\right)^{2\tau}.
\end{equation}
Now by the estimates~(\ref{indep}) and~(\ref{indep2})
\[ |\Pi_j(\omega_{j+1})| \geq |\Pi_j(\nabla h(I^j(t_{j+1})))| - |\Pi_j(\nabla h(I^j(t_{j+1}))-\omega_{j+1})| \PS \left(T_jr_jL_j^{-1}\right)^{2\tau} \]
and so $\Pi_j(\omega_{j+1})$ is non zero, which means that $\omega_{j+1}$ is not a linear combination of $\{\omega_1,\dots,\omega_j\}$. This proves $(c)$.

Finally we can write
\begin{eqnarray*}
|\omega_{j+1}-\omega_j| & \leq & |\omega_{j+1}-\nabla h(I^j(t_{j+1}))| + |\nabla h(I^j(t_{j+1}))-\nabla h(I^j(t_j))| \\
& + & |\nabla h(I^j(t_j))-\nabla h(I^{j-1}(t_j))| + |\nabla h(I^{j-1}(t_j))-\omega_j|,
\end{eqnarray*} 
and hence
\[ |\omega_{j+1}-\omega_j| \MP (r_j+r_{j+1}) \MP r_j. \]
So given any $I \in \mathcal{B}_{r_{j+1}}(\omega_{j+1})$, we have
\[ |\nabla h(I)-\omega_j| \leq |\nabla h(I)-\omega_{j+1}|+|\omega_{j+1}-\omega_j| \MP r_j, \]
so that $I \in \mathcal{B}_{2r_j/3}(\omega_j)$, which gives $(d)$. This ends the proof.
\end{proof}

\paraga Now we can eventually complete the proof of the main Theorem~\ref{mainth2}.

\begin{proof}[Proof of Theorem~\ref{mainth2}]
As a consequence of Propositions~\ref{etape1}, \ref{etape2} and~\ref{etape3}, we know that
\[ |I(t)-I(0)| < (n+1)^2 r_0, \quad 0\leq t < e^m \]
provided that the parameters $r_0$, $m$ and $\varepsilon$ satisfy the following eleven conditions:
\begin{itemize}
\item[$(i)$] $r_{j+1} \PM \left(T_jr_jL_j^{-1}\right)^{2\tau}$, $j \in \{1,\dots,n-1\}$;
\item[$(ii)$] $\varepsilon \PM \left(T_jr_jL_j^{-1}\right)^{2\tau}$, $j \in \{1,\dots,n-1\}$;
\item[$(iii)$] $\left(T_jr_jL_j^{-1}\right)^\tau \PM r_j$, for $j \in \{1,\dots,n-1\}$;
\item[$(iv)$] $mT_jr_j \PM 1$, for $j \in \{1,\dots,n\}$;
\item[$(v)$] $r_1 \PM r_0^2$;
\item[$(vi)$] $mT_j\varepsilon\PM r_1r_j$, for $j \in \{1,\dots,n\}$;
\item[$(vii)$] $\varepsilon\PM r_n$;
\item[$(viii)$] $\left(T_jr_jL_j^{-1}\right)^\tau\PM\gamma L_j^{-\tau}$, for $j \in \{1,\dots,n-1\}$;
\item[$(ix)$] $r_0\PM \gamma$;
\item[$(x)$] $r_0 \PM R$;
\item[$(xi)$] $r_j \PM \min\{r,s\}$, for $j \in \{1,\dots,n\}$,
\end{itemize}
where $r_j \EP T_{j}^{-1}\varepsilon^{a_j}$, with $a_j$ to be defined for $j\in\{1,\dots,n\}$, and 
\begin{equation}\label{estimperF}
1 \MP T_j \MP \varepsilon^{-a_{j}(n-1)}r_{0}^{-2}, \quad 1 \leq L_j \MP \max_{i\in\{1,\dots,j\}}\{\varepsilon^{-a_i(n-1)}\}r_0^{-2}. 
\end{equation}
So let us choose $m \PE \varepsilon^{-a}$ and $r_0=\varepsilon^b$, for two constants $a$ and $b$ also to be determined.

Using the estimates~(\ref{estimperF}) on the periods $T_j$, $j\in\{1,\dots,n\}$ and the numbers $L_j$, $j\in\{1,\dots,n-1\}$, as well as the form of $r_0$, $r_j$ for $j\in\{1,\dots,n\}$ and $m$, one can see that conditions $(i)$ to $(xi)$ are implied by the following conditions:
\begin{itemize}
\item[$(i')$] $a_{j+1}-2n\tau\left(\max_{i\in\{1,\dots,j\}}\{a_i\}\right) -4\tau b >0$, $j \in \{1,\dots,n-1\}$;
\item[$(ii')$] $1-2n\tau a_j-4\tau b>0$, $j \in \{1,\dots,n-1\}$;
\item[$(iii')$] $(\tau-1)a_j-2b>0$, for $j \in \{1,\dots,n-1\}$;
\item[$(iv')$] $a_j>a$, for $j \in \{1,\dots,n\}$;
\item[$(v')$] $a_1-2b>0$;
\item[$(vi')$] $1-a-(2n-1)a_j-na_1-6b>0$, for $j \in \{1,\dots,n\}$;
\item[$(vii')$] $1-na_n-2b>0$;
\item[$(viii')$] $\varepsilon < \gamma^{(\tau a_j)^{-1}}$, for $j \in \{1,\dots,n-1\}$;
\item[$(ix')$] $\varepsilon < \gamma^{b^{-1}}$;
\item[$(x')$] $\varepsilon < R^{b^{-1}}$;
\item[$(xi')$] $\varepsilon < (\min\{r,s\})^{(na_j+2b)^{-1}}$, for $j \in \{1,\dots,n\}$.
\end{itemize}
So we need to choose constants $a_j$, $j\in\{1,\dots,n\}$, $a$ and $b$ such that the previous conditions are satisfied. First note that by $(i')$, the sequence $a_j$, for $j\in\{1,\dots,n\}$, has to be increasing, hence 
\[ \max_{i\in\{1,\dots,j \}} \{a_i\}=a_j, \quad j\in\{1,\dots,n\}.\] 
Then using $(v')$, we observe that $(i')$ is satisfied if $a_{j+1}=2\tau (n+1)a_j$ for $j\in\{1,\dots,n-1\}$, that is
\[ a_j=(2\tau (n+1))^{j-1}a_1. \]
Now for $(ii')$ to be satisfied, one can choose
\[ a_1=(2\tau (n+1))^{-n}, \]
so $a_j$, for $j\in\{2,\dots,n\}$, is determined by
\[ a_j=(2\tau (n+1))^{-n-1+j}. \]
Then, since $\tau \geq 2$, we may choose
\[ b=3^{-1}a_1 =3^{-1}(2\tau (n+1))^{-n} \] 
and $(iii')$ easily holds. Finally, we may also choose
\[ a=b=3^{-1}(2\tau (n+1))^{-n} \] 
so that $(iv')$ is satisfied. With those values, it is easy to check that $(v')$, $(vi')$ and $(vii')$ holds, recalling that $\tau\geq 2$ and $n\geq 2$. To conclude, just note that $(viii')$, $(ix')$, $(x')$ and $(xi')$ are satisfied if $\varepsilon \leq \varepsilon_0$ with a sufficiently small $\varepsilon_0$ depending on $n,R,r,s,M,\gamma$ and $\tau$. This ends the proof. 
\end{proof}

\appendix

\section{Proof of the normal form} \label{estimates}

In this first appendix we will give the proof of the normal form~\ref{lemmeham}. We will closely follow the method of~\cite{Pos99a} and deduce our result from an equivalent version in terms of vector fields (Proposition~\ref{lemmevecti} below).

\subsection{Preliminary estimates}

Before giving the proof, we will need some general estimates based on the classical Cauchy inequality.

\paraga First consider the case of a function $f$ analytic on some domain $\mathcal{D}_{r,s}$, and recall that
\[ |\partial_\theta f|_{r,s}=\max_{1 \leq i \leq n}|\partial_{\theta_i} f|_{r,s}, \; |\partial_I f|_{r,s}=\max_{1 \leq i \leq n}|\partial_{I_i} f|_{r,s}. \] 
We take $r',s'$ such that $0<r'<r$ and $0<s'<s$. The first estimate is classical, but we repeat the proof for convenience.

\begin{lemma} \label{lemme1}
Under the previous assumptions, we have
\[ |\partial_{I}f|_{r-r',s} < \frac{1}{r'}|f|_{r,s}, \quad |\partial_{\theta}f|_{r,s-s'} < \frac{1}{s'}|f|_{r,s}. \]
\end{lemma}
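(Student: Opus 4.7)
The plan is to derive both estimates from the one-variable Cauchy integral formula, applied coordinate-wise using the product structure of the complex domain $\mathcal{D}_{r,s}$.

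For the first inequality, I fix a point $(\theta_0, I_0) \in \mathcal{D}_{r-r', s}$ and an index $i \in \{1, \ldots, n\}$, and let $e_i$ denote the $i$-th standard basis vector of $\C^n$. Since $d(I_0, B) < r - r'$, the closed complex disk $\{I_0 + \zeta e_i : |\zeta| \leq r'\}$ is still contained in $\{I \in \C^n : d(I,B) < r\}$, because the supremum norm satisfies $d(I_0 + \zeta e_i, B) \leq d(I_0, B) + |\zeta| < r$. Hence $\zeta \mapsto f(\theta_0, I_0 + \zeta e_i)$ is holomorphic on this disk, and Cauchy's formula for the derivative yields
\[ \partial_{I_i} f(\theta_0, I_0) = \frac{1}{2\pi i} \oint_{|\zeta| = r'} \frac{f(\theta_0, I_0 + \zeta e_i)}{\zeta^2} \, d\zeta. \]
Bounding the integrand by $|f|_{r,s}/(r')^2$ and multiplying by the length $2\pi r'$ of the contour gives $|\partial_{I_i} f(\theta_0, I_0)| \leq (r')^{-1}|f|_{r,s}$. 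Taking the supremum over $(\theta_0, I_0) \in \mathcal{D}_{r-r', s}$ and over $i$ then produces the claimed bound (strict because the maximum of $f$ on the closure of $\mathcal{D}_{r,s}$ cannot be attained on the interior $\mathcal{D}_{r-r',s}$ unless $f$ is constant, a case that is trivial anyway).

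The second inequality is obtained by the same argument but perturbing in an angle coordinate: for $(\theta_0, I_0) \in \mathcal{D}_{r, s-s'}$, the disk $\{\theta_0 + \zeta e_i : |\zeta| \leq s'\}$ still satisfies $|\mathcal{I}(\theta_0 + \zeta e_i)| \leq |\mathcal{I}(\theta_0)| + |\zeta| < s$, so it remains inside $\mathcal{D}_{r,s}$. Applying Cauchy's formula to $\zeta \mapsto f(\theta_0 + \zeta e_i, I_0)$ on the circle of radius $s'$ gives the analogous bound with $s'$ in place of $r'$.

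There is no genuine obstacle here; this is the textbook Cauchy inequality in several complex variables. The only point requiring a small amount of care is verifying that the complex disks used in the Cauchy representation actually fit inside the larger domain $\mathcal{D}_{r,s}$, which is immediate from the triangle inequality applied to the supremum norm on actions and to the supremum norm of the imaginary part of the angles, together with the definition of $\mathcal{D}_{r,s}$ as a product condition on these two quantities.
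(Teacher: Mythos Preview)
Your proof is correct and follows essentially the same approach as the paper's: both reduce to the one-variable Cauchy estimate applied to an auxiliary function of a single complex parameter. The only cosmetic difference is that the paper phrases the argument with an arbitrary unit vector $v\in\C^n$ while you use the standard basis vectors $e_i$, which is exactly what the definition $|\partial_I f|_{r,s}=\max_i|\partial_{I_i}f|_{r,s}$ calls for.
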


\begin{proof}
For $x=(\theta,I) \in \mathcal{D}_{r-r',s}$ and any unit vector $v \in \C^n$, consider the function
\[ F_{x,v}: t \in \C \longmapsto f(\theta,I+tv) \in \C. \] 
This function is well-defined and holomorphic on the disc $|t| < r'$, so the classical Cauchy estimate gives
\[ |F_{x,v}'(0)| < \frac{1}{r'}|f|_{r,s}, \] 
from which the inequality for $\partial_I f$ follows easily by optimizing with respect to $x$ and $v$. The estimate for $\partial_\theta f$ is completely similar.
\end{proof}

\paraga Now let $j\in\{1,\dots,n\}$, and let $f$ and $g$ be analytic functions defined on the domain 
\[ \mathcal{D}_{r_j,s_j}(\omega_j)=\{(\theta,I) \in \mathcal{D}_{r_j,s_j} \; | \; |\nabla h(I)-\omega_j| \MP r_j\}, \]
where $\omega_j$ is a periodic vector. We can define a vector field norm on $\mathcal{D}_{r_j,s_j}(\omega_j)$ by  
\[ |X_f|_{r_j,s_j}=\max\left(|\partial_I f|_{r_j,s_j},|\partial_\theta f|_{r_j,s_j}\right). \]
However, it will more convenient to use the following ``weighted" norm
\[ ||X_f||_{r_j,s_j}=\max\left(|\partial_I f|_{r_j,s_j},s_1r_1^{-1}|\partial_\theta f|_{r_j,s_j}\right), \]
since the components $|\partial_I f|_{r_j,s_j}$ and $|\partial_\theta f|_{r_j,s_j}$ may have very different sizes when estimated from the size of $f$ by a Cauchy estimate (this idea is used in \cite{DG96}).

\begin{remark}
Note that under assumption $(A)$, $s_1r_1^{-1}>1$, so we have the inequality $|X_f|_{r_j,s_j} \leq ||X_f||_{r_j,s_j}$ and the equality holds if $f$ is integrable. Moreover, note that each norm $||\,.\,||_{r_j,s_j}$ is normalized with $s_1r_1^{-1}$ (and not with $s_jr_j^{-1}$): by our inclusions of domains, this implies in particular that $||\,.\,||_{r_{j+1},s_{j+1}} \leq ||\,.\,||_{2r_j/3,2s_j/3}$. 
\end{remark}

It is well-known how to use the Cauchy inequality to estimate the size of the Poisson bracket $\{f,g\}$ in terms of $f$ and $g$. Similarly, our second estimate is concerned with the size of the vector field $[X_f,X_g]$ in terms of $X_f$ and $X_g$. We take $r',s'$ such that $0<r'<r_j$ and $0<s'<s_j$.

\begin{lemma} \label{lemme3}
Under the previous assumptions, we have
\begin{equation*}
||[X_f,X_g]||_{r_j-r',s_j-s'} < \frac{1}{r'}||X_f||_{r_j,s_j}||X_g||_{r_j,s_j}, 
\end{equation*}
and moreover, if $g$ is integrable, then
\begin{equation*}
||[X_f,X_g]||_{r_j-r',s_j-s'} < \frac{1}{s'}||X_f||_{r_j,s_j}||X_g||_{r_j}. 
\end{equation*}
\end{lemma}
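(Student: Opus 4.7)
The plan is to use the standard identity $[X_f, X_g] = X_{\{f, g\}}$, which reduces the Lie bracket of Hamiltonian vector fields to estimating the Hamiltonian vector field associated with the Poisson bracket. Writing
\[
\{f, g\} = \sum_{k=1}^n \bigl(\partial_{I_k} f \, \partial_{\theta_k} g - \partial_{\theta_k} f \, \partial_{I_k} g\bigr),
\]
one computes the two components $\partial_I \{f,g\}$ and $-\partial_\theta \{f,g\}$ of $X_{\{f,g\}}$ by differentiating under the sum. Each component is a finite sum of products of a first-order and a second-order partial derivative of $f$ or $g$; the second derivatives will be controlled via Lemma~\ref{lemme1}, while the first derivatives are absorbed into the weighted norms $\|X_f\|_{r_j,s_j}$ and $\|X_g\|_{r_j,s_j}$.

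For the first estimate, I would apply Cauchy in the $I$ direction to every second-order partial derivative that arises, costing a factor $1/r'$. The relation $|\partial_\theta f|_{r_j,s_j} \leq (r_1/s_1)\|X_f\|_{r_j,s_j}$ extracted from the definition of the weighted norm, combined with the weight $s_1/r_1$ applied to the $\theta$-component of $X_{\{f,g\}}$ when measuring its weighted norm, makes the two scale factors $r_1/s_1$ and $s_1/r_1$ cancel, and every resulting term is bounded by a constant times $(1/r')\|X_f\|_{r_j,s_j}\,\|X_g\|_{r_j,s_j}$; the constant is absorbed into the strict inequality.

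For the integrable case, the key simplification is $\partial_\theta g \equiv 0$, so the bracket reduces to $\{f, g\} = -\sum_k \partial_{\theta_k} f \, \partial_{I_k} g$. In the resulting expressions for the components of $X_{\{f,g\}}$, every second-order derivative of $f$ that appears can be produced by Cauchy in the $\theta$ direction, gaining the better factor $1/s'$ instead of $1/r'$. The only second-order derivatives of $g$ that occur live in the $I$ variables and hence demand Cauchy in $I$; however these terms are invariably paired with $\partial_\theta f$, which carries the saving factor $r_1/s_1$, and after multiplication by the $\theta$-weight $s_1/r_1$ in the norm (and accounting for the natural relative sizes of $r'$ and $s'$ dictated by the scale $s_1/r_1$), each term fits within the claimed $\frac{1}{s'}\|X_f\|_{r_j,s_j}\|X_g\|_{r_j}$ bound.

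The main bookkeeping obstacle is the second estimate: one must carefully choose the Cauchy direction applied to each individual term and verify that the weights of the norm, combined with the $r_1/s_1$ ratio inherited by the $\partial_\theta f$ factors, conspire to transform the naive $1/r'$ bound on the $\partial_{II} g$ contribution into the improved $1/s'$ bound. The weighted norm $\|\cdot\|_{r_j,s_j}$, with weight $s_1/r_1$ placed exactly on the $\theta$-component, is designed precisely to make this cancellation work; the general estimate, by contrast, does not exploit any such structure and only yields the coarser $1/r'$.
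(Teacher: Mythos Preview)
Your approach via the identity $[X_f, X_g] = X_{\{f,g\}}$ and term-by-term Cauchy estimates on the Poisson bracket is genuinely different from the paper's. The paper instead uses the flow formula
\[
[X_f, X_g] = \left.\frac{d}{dt}\right|_{t=0} (\Phi_t^g)^* X_f
\]
and applies a \emph{one-dimensional} Cauchy estimate in the complex time variable $t$, after observing that $\Phi_t^g$ sends $\mathcal{D}_{r_j-r',s_j-s'}(\omega_j)$ into $\mathcal{D}_{r_j,s_j}(\omega_j)$ whenever $|t| < r'\|X_g\|_{r_j,s_j}^{-1}$. The integrable case then falls out in one line: when $g$ depends only on $I$, the flow $\Phi_t^g$ moves only the angle variables, so it remains in the larger domain for the longer time $|t| < s'\|X_g\|_{r_j}^{-1}$, and the improved $1/s'$ bound follows with no further bookkeeping. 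This is the chief advantage of the flow approach over yours: the improvement in the integrable case is a geometric observation about the flow rather than a delicate term-by-term cancellation.

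Your direct computation is viable in spirit, but two points do not go through as written. First, the claim that ``the constant is absorbed into the strict inequality'' is incorrect: summing over $k\in\{1,\dots,n\}$ and over the several cross-terms produces a multiplicative constant of order $n$, which a strict inequality cannot swallow. Second, and more substantively, in the integrable case the term $\partial_{\theta_k} f \cdot \partial_{I_l I_k} g$ appearing in $\partial_{I_l}\{f,g\}$ forces a Cauchy estimate in the $I$ direction on $\partial_{I_l I_k} g$, costing $1/r'$; combined with the factor $r_1/s_1$ coming from $|\partial_{\theta_k} f|$, this yields $(r_1/s_1)(1/r')$, and converting that into $1/s'$ requires $s'/r'\lesssim s_1/r_1$. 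You invoke ``the natural relative sizes of $r'$ and $s'$ dictated by the scale $s_1/r_1$'', but no such relation between $r'$ and $s'$ is among the hypotheses of the lemma. The paper's flow argument sidesteps this issue entirely.
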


\begin{proof}
First recall that 
\[ [X_f,X_g]=\left.\dfrac{d}{dt} (\Phi_t^g)^*X_f \right\vert_{t=0}. \] 
Now fix $x \in \mathcal{D}_{r_j-r',s_j-s'}$, and let us define the vector-valued function
\[ F_x: t \in \C \longmapsto (\Phi_t^g)^*X_f(x) \in \C^{2n}. \]
Clearly, the map $\Phi_t^g$ is analytic, and it sends $\mathcal{D}_{r_j-r',s_j-s'}(\omega_j)$ into $\mathcal{D}_{r_j,s_j}(\omega_j)$ for complex values of $t$ satisfying
\[ |t| < r'||X_g||_{r_j,s_j}^{-1}, \]
hence the function $F_x$ is well-defined and analytic on the disc $|t| < r'||X_g||_{r_j,s_j}^{-1}$. So applying the classical Cauchy estimate to each component of $F_x$ and optimizing with respect to $x \in \mathcal{D}_{r_j-r',s_j-s'}$ we obtain the desired inequality
\[ ||[X_f,X_g]||_{r_j-r',s_j-s'} < \frac{1}{r'}||X_g||_{r_j,s_j}||X_f||_{r_j,s_j}. \]
In case $g$ is integrable, the map $\Phi_t^g$ leaves invariant the action components, so the same reasoning can be applied on the larger disc
\[ |t| < s'||X_g||_{r_j,s_j}^{-1}, \]
giving the improved estimate
\[ ||[X_f,X_g]||_{r_j-r',s_j-s'} < \frac{1}{s'}||X_f||_{r_j,s_j}||X_g||_{r_j}. \]
\end{proof}

\subsection{Proof of Proposition~\ref{lemmeham}} 

Now we can pass to the proof of Proposition~\ref{lemmeham}. Given $\tilde{\varepsilon}>0$ which will be the size of our perturbating vector field $X_f$, let us introduce a slightly modified set of conditions $(\tilde{A}_j)$, for $j\in\{1,\dots,n\}$, where $(\tilde{A}_1)$ is
\begin{equation} 
\begin{cases}
mT_1\tilde{\varepsilon}\PM r_1, \; mT_1r_1\PM s_1, \; 0<r_1 \MP s_1, \\
\mathcal{B}_{r_1}(\omega_1)\neq \emptyset, 
\end{cases} \tag{$\tilde{A}_1$}
\end{equation} 
and for $j\in\{2,\dots,n\}$, ($\tilde{A}_j)$ is
\begin{equation} 
\begin{cases}
mT_j\tilde{\varepsilon}\PM r_j, \; mT_jr_j\PM s_j, \; 0<r_j \MP s_j,\\
\mathcal{B}_{r_j}(\omega_j)\neq \emptyset,\;\mathcal{D}_{r_j,s_j}(\omega_{j})\subseteq\mathcal{D}_{2r_{j-1}/3,2s_{j-1}/3}(\omega_{j-1}).  
\end{cases} \tag{$\tilde{A}_j$}
\end{equation}
These modifications take into account the fact that we will use the weighted norms $||\,.\,||_{r_j,s_j}$, for $j\in\{1,\dots,n\}$. 

\paraga The normal form lemma in terms of vector fields is the following.

\begin{proposition} \label{lemmevecti}
Consider $H=h+f$ on the domain $\mathcal{D}_{r_1,s_1}(\omega_1)$, with $||X_f||_{r_1,s_1} < \tilde{\varepsilon}$, and let $j\in\{1,\dots,n\}$. If $(\tilde{A}_i)$ is satisfied for any $i\in\{1,\dots,j\}$, then there exists an analytic symplectic transformation 
\[ \Psi_j: \mathcal{D}_{2r_j/3,2s_j/3}(\omega_j) \rightarrow \mathcal{D}_{r_1,s_1}(\omega_1)\] 
such that
\begin{equation*}
H \circ \Psi_j=h+g_j+f_j,
\end{equation*}
with $\{g_j,l_i\}=0$ for $i \in \{1, \dots, j\}$, and the estimates
\begin{equation*}
||X_{g_j}||_{2r_j/3,2s_j/3} \MP \tilde{\varepsilon}, \quad ||X_{f_j}||_{2r_j/3,2s_j/3} \MP e^{-m} \tilde{\varepsilon}. 
\end{equation*}
Moreover, we have $\Psi_j=\Phi_1 \circ \cdots \circ \Phi_j$ with
\[ \Phi_i: \mathcal{D}_{2r_i/3,2s_i/3}(\omega_i) \rightarrow \mathcal{D}_{r_i,s_i}(\omega_i)\] 
such that $|\Phi_i-\mathrm{Id}|_{2r_i/3,2s_i/3} \PM r_i$.
\end{proposition}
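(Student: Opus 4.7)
I will prove Proposition~\ref{lemmevecti} by induction on $j$. Each inductive step consists of an inner iterated one-phase averaging against the periodic Hamiltonian $l_j$, iterated $m$ times to push the non-resonant part of the current perturbation into an exponentially small remainder. The two key features of the set-up are that, the averaging being \emph{periodic}, the cohomological equation $\{\chi,l_j\}=f-[f]_j$ is solved explicitly by an integral, so no small divisors appear; and that this averaging preserves all previously established resonance relations $\{g,l_i\}=0$, $i<j$.

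\textbf{The inductive step.} Given $\Psi_{j-1}$ already built, we have $H\circ\Psi_{j-1}=h+g_{j-1}+f_{j-1}$ on $\mathcal{D}_{2r_{j-1}/3,2s_{j-1}/3}(\omega_{j-1})$, and by the inclusion in $(\tilde A_j)$ on $\mathcal{D}_{r_j,s_j}(\omega_j)$ as well. Setting aside $f_{j-1}$ (already exponentially small), I regard $g_{j-1}$ as the new perturbation and construct time-one maps $\varphi^k=\Phi_1^{\chi^k}$, $k=0,\dots,m-1$, with
\[\chi^k=\frac{1}{T_j}\int_0^{T_j}\bigl(f^k-[f^k]_j\bigr)\circ\Phi_s^{l_j}\,s\,ds,\]
starting from $g^0=0$, $f^0=g_{j-1}$, then updating $g^{k+1}=g^k+[f^k]_j$ and reading off $f^{k+1}$ from the Lie expansion of $(h+g^k+f^k)\circ\varphi^k$. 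The integral formula gives $||X_{\chi^k}||\MP T_j||X_{f^k}||$, and Lemma~\ref{lemme3} together with $mT_j\tilde\varepsilon\PM r_j$ and $mT_jr_j\PM s_j$ from $(\tilde A_j)$ yield a contraction $||X_{f^{k+1}}||\leq\tfrac{1}{2}||X_{f^k}||$ on a domain whose radii shrink by $r_j/(6m)$ and $s_j/(6m)$ at each step. After $m$ iterations I set $\Phi_j=\varphi^0\circ\cdots\circ\varphi^{m-1}$, so that $(h+g_{j-1})\circ\Phi_j=h+g_j+\tilde f_j$ with $||X_{g_j}||\MP\tilde\varepsilon$ and $||X_{\tilde f_j}||\MP e^{-m}\tilde\varepsilon$; the bound $|\Phi_j-\mathrm{Id}|_{2r_j/3,2s_j/3}\PM r_j$ comes from summing the displacements of the $\varphi^k$. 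Setting $\Psi_j=\Psi_{j-1}\circ\Phi_j$ and $f_j=\tilde f_j+f_{j-1}\circ\Phi_j$ finishes the outer step.

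\textbf{Main obstacle: preservation of resonance relations.} The delicate point is to verify that the new normal form $g_j$ commutes not only with $l_j$, but also with $l_1,\dots,l_{j-1}$. This rests on the following commuting-operators observation: since $\{l_i,l_j\}=0$, the Lie derivatives $L_{l_i}=\{\cdot,l_i\}$ and $L_{l_j}$ commute, hence $\ker L_{l_i}$ is stable both under the spectral projector $[\,\cdot\,]_j$ onto $\ker L_{l_j}$ and under the right inverse of $L_{l_j}$ given by the integral formula above. Inductively on $k$, $g^k$, $f^k$ and $\chi^k$ all lie in $\bigcap_{i<j}\ker L_{l_i}$, so the flows $\varphi^k$ preserve each $l_i$, and this property transmits to $g_j$. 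This algebraic observation, trivial in spirit but crucial, will be checked in the appendix by a direct differentiation of the defining integrals.

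\textbf{Remaining work.} The base case $j=1$ is the same averaging scheme with $g_0=0$ and $f_0=f$; the book-keeping of Cauchy estimates relies on the weighted norm $||\,.\,||_{r_j,s_j}$, which is designed with the factor $s_1r_1^{-1}$ precisely so that $||\,.\,||_{r_{j+1},s_{j+1}}\leq||\,.\,||_{2r_j/3,2s_j/3}$ under the nesting of domains; Proposition~\ref{lemmeham} itself is then deduced from Proposition~\ref{lemmevecti} by the standard translation between vector field norms and function norms, using Lemma~\ref{lemme1} to recover $|\partial_\theta g_j|$ and $|\partial_\theta f_j|$ from $||X_{g_j}||$ and $||X_{f_j}||$. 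The only technical---but entirely routine---task is to arrange all the smallness conditions in $(\tilde A_j)$ so that the domain losses at every one of the $m$ sub-steps and every one of the $n$ outer steps fit together consistently.
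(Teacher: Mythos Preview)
Your proposal is correct and follows essentially the same approach as the paper: induction on $j$, with each step consisting of $m$ iterations of periodic averaging against $l_j$ on nested domains shrinking by $O(r_j/m)$, using the integral formula for $\chi$ and Lemma~\ref{lemme3} for the contraction, and invoking the commutation $\{l_i,l_j\}=0$ to preserve the earlier resonance relations (the paper verifies this by the same direct computation you allude to). One cosmetic point: a contraction factor $\tfrac12$ yields $2^{-m}$ rather than the $e^{-m}$ in the statement, so you should tighten the implicit constants in $(\tilde A_j)$ to get the factor $e^{-1}$ as the paper does, and your step size should be $r_j/(3m)$ rather than $r_j/(6m)$ to land exactly on $2r_j/3$.
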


Let us see how this implies our Proposition~\ref{lemmeham}.

\begin{proof}[Proof of Proposition~\ref{lemmeham}.]
We know that $|f|_{r,s}<\varepsilon$, so we can apply Lemma~\ref{lemme1} with $r'=r_1$ and $s'=s_1$ to obtain
\[ |\partial_{I}f|_{r-r_1,s} < r_1^{-1}|f|_{r,s}, \quad |\partial_{\theta}f|_{r,s-s_1} < s_1^{-1}|f|_{r,s},\]
and hence
\[ ||X_f||_{r-r_1,s-s_1} < r_{1}^{-1}\varepsilon. \]
Now since $r_1 \PM r$ and $s_1 \PM s$ (this is part of assumption $(A_1)$), we have the inclusion $\mathcal{D}_{r_1,s_1}(\omega_1) \subseteq \mathcal{D}_{r-r_1,s-s_1}$ and hence
\[ ||X_f||_{r_1,s_1} \MP r_{1}^{-1}\varepsilon. \]
Set $\tilde{\varepsilon}=r_{1}^{-1}\varepsilon$, then for any $i\in\{1,\dots,j\}$, $(A_i)$ implies $(\tilde{A}_i)$ so that the Proposition~\ref{lemmevecti} can be applied: there exists an analytic symplectic transformation 
\[ \Psi_j: \mathcal{D}_{2r_j/3,2s_j/3}(\omega_j) \rightarrow \mathcal{D}_{r_1,s_1}(\omega_1)\] 
such that 
\begin{equation*}
H \circ \Psi_j=h+g_j+f_j,
\end{equation*}
with $\{g_j,l_i\}=0$ for $i \in \{1, \dots, j\}$, and the estimates
\begin{equation*}
||X_{g_j}||_{2r_j/3,2s_j/3} \MP \varepsilon r_{1}^{-1}, \quad ||X_{f_j}||_{2r_j/3,2s_j/3} \MP e^{-m} r_{1}^{-1}\varepsilon. 
\end{equation*}
Recalling the definition of our norm $||\,.\,||_{r_j,s_j}$, this readily implies 
\begin{equation*}
|\partial_\theta g_j|_{2r_j/3,2s_j/3}\MP \varepsilon s_1^{-1} \MP \varepsilon, \quad |\partial_\theta f_j|_{2r_j/3,2s_j/3}\MP e^{-m}\varepsilon s_1^{-1} \MP e^{-m}\varepsilon. 
\end{equation*}
Moreover, we have $\Psi_j=\Phi_1 \circ \cdots \circ \Phi_j$ with
\[ \Phi_i: \mathcal{D}_{2r_i/3,2s_i/3}(\omega_i) \rightarrow \mathcal{D}_{r_i,s_i}(\omega_i)\] 
such that $|\Phi_i-\mathrm{Id}|_{2r_i/3,2s_i/3} \PM r_i$.
\end{proof}

\paraga Hence it remains to prove Proposition~\ref{lemmevecti}. This will be done by induction on $j \in \{1,\dots,n\}$, and for that we shall need two iterative lemmas. The first iterative lemma is needed for the first step, that is to prove the statement for $j=1$, and it can be seen as an averaging process with respect to one fast angle. 

\begin{lemma}[First iterative lemma] \label{lemmeit}
Consider $H=h+g+f$ on the domain $\mathcal{D}_{r_1,s_1}(\omega_1)$, with $h$ integrable, $\{g,l_1\}=0$, and assume that
\begin{equation*}
||X_{g}||_{r_1,s_1} \MP \tilde{\varepsilon}, \quad ||X_{f}||_{r_1,s_1} < \tilde{\varepsilon}.
\end{equation*}
If we have 
\begin{equation*}
T_1\tilde{\varepsilon} < r' < s'
\end{equation*}
with two real numbers $r'$, $s'$ satisfying $0<r'<r_1$ and $0<s'<s_1$, then there exists an analytic symplectic transformation 
\[ \varphi_1: \mathcal{D}_{r_1-r',s_1-s'}(\omega_1) \rightarrow \mathcal{D}_{r_1,s_1}(\omega_1)\] 
such that $|\varphi_1-\mathrm{Id}|_{r_1-r',s_1-s'} < T_1\tilde{\varepsilon}$ and
\begin{equation*}
H \circ \varphi_1=h+g_++f_+,
\end{equation*}
with $\{g_+,l_1\}=0$ and the estimates
\begin{equation*}
||X_{g_+}||_{r_1,s_1} \MP \tilde{\varepsilon}, \quad||X_{g_+}-X_{g}||_{r_1,s_1} < \tilde{\varepsilon}, \quad ||X_{f_+}||_{r_1-r',s_1-s'} \MP \left(\frac{r_1}{s'}+\frac{\tilde{\varepsilon}}{r'}\right)T_1\tilde{\varepsilon}. 
\end{equation*}
\end{lemma}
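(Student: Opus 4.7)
The plan is a standard one-step averaging along the periodic flow of $l_1$, the only subtlety being to use the integrability of $h-l_1$ and the smallness of its gradient on $\mathcal{D}_{r_1,s_1}(\omega_1)$ to make the $s'$-loss rather than $r'$-loss.

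\medskip

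\textbf{Step 1: choice of generating function.} I would define the generating function by the explicit integral formula
\[
\chi = \frac{1}{T_1}\int_0^{T_1} s\,(f-[f]_1)\circ\Phi_s^{l_1}\,ds,
\]
which formally solves the homological equation $\{\chi,l_1\}=f-[f]_1$ (this is checked by differentiating under the integral and using periodicity, exactly as in the introduction). Since the flow $\Phi_s^{l_1}$ is a pure $\theta$-translation, differentiating under the integral gives pointwise bounds
\[
|\partial_I\chi|_{r_1,s_1}\leq T_1|\partial_I f|_{r_1,s_1}, \qquad |\partial_\theta\chi|_{r_1,s_1}\leq T_1|\partial_\theta f|_{r_1,s_1},
\]
so $\|X_\chi\|_{r_1,s_1}\leq T_1\tilde\varepsilon$. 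The hypothesis $T_1\tilde\varepsilon < r'< s'$ guarantees that the time-one Hamiltonian flow $\varphi_1=\Phi_1^\chi$ maps $\mathcal{D}_{r_1-r',s_1-s'}(\omega_1)$ into $\mathcal{D}_{r_1,s_1}(\omega_1)$ and satisfies $|\varphi_1-\mathrm{Id}|_{r_1-r',s_1-s'}<T_1\tilde\varepsilon$.

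\medskip

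\textbf{Step 2: expansion and separation of the new resonant part.} I would use the second-order Lie expansion
\[
H\circ\varphi_1 = H + \{H,\chi\} + \int_0^1 (1-t)\{\{H,\chi\},\chi\}\circ\Phi_t^\chi\,dt.
\]
Decompose $H=l_1+(h-l_1)+g+f$; the homological equation gives $\{l_1,\chi\}=[f]_1-f$, so
\[
H+\{H,\chi\} = h + (g+[f]_1) + \{h-l_1+g+f,\chi\}.
\]
Set $g_+:=g+[f]_1$ and let $f_+$ be the sum of $\{h-l_1+g+f,\chi\}$ and the quadratic remainder. Since $\{g,l_1\}=0$ by hypothesis and $[f]_1$ is by construction invariant under the $\Phi_s^{l_1}$-flow (i.e.\ $\{[f]_1,l_1\}=0$), we get $\{g_+,l_1\}=0$ as required.

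\medskip

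\textbf{Step 3: size estimates.} Averaging along a flow does not increase the vector field norm, so $\|X_{[f]_1}\|_{r_1,s_1}<\tilde\varepsilon$, which yields $\|X_{g_+}-X_g\|_{r_1,s_1}<\tilde\varepsilon$ and hence $\|X_{g_+}\|_{r_1,s_1}\MP\tilde\varepsilon$. The main point is then to control $X_{f_+}$ via Lemma~\ref{lemme3}, since $X_{\{u,v\}}=[X_u,X_v]$:
\begin{itemize}
\item[(i)] $\{h-l_1,\chi\}$: here $h-l_1$ is integrable and on $\mathcal{D}_{r_1,s_1}(\omega_1)$ its gradient is $\nabla h-\omega_1$, which is bounded by a constant times $r_1$. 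Using the \emph{integrable} version of Lemma~\ref{lemme3} (loss in $s'$), this term is $\MP \frac{r_1}{s'}T_1\tilde\varepsilon$.
\item[(ii)] $\{g,\chi\}$ and $\{f,\chi\}$: by the general version of Lemma~\ref{lemme3} (loss in $r'$), these are each $\MP \frac{\tilde\varepsilon}{r'}T_1\tilde\varepsilon$.
\item[(iii)] Quadratic remainder: $\{H,\chi\}=[f]_1-f+\{h-l_1+g+f,\chi\}$ has vector field norm $\MP\tilde\varepsilon$ by (i)–(ii) together with $\|X_{[f]_1-f}\|\MP\tilde\varepsilon$, and composition with $\Phi_t^\chi$ just moves us between nested domains. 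Applying Lemma~\ref{lemme3} once more produces a further factor $\tilde\varepsilon/r'$, which is absorbed into the $\frac{\tilde\varepsilon}{r'}T_1\tilde\varepsilon$ term.
\end{itemize}
Summing these contributions gives the advertised bound $\|X_{f_+}\|_{r_1-r',s_1-s'}\MP\bigl(\tfrac{r_1}{s'}+\tfrac{\tilde\varepsilon}{r'}\bigr)T_1\tilde\varepsilon$.

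\medskip

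\textbf{Main obstacle.} The only nontrivial bookkeeping is to exploit the integrability of $h-l_1$ together with the small-gradient condition $|\nabla h-\omega_1|\MP r_1$ available on a neighbourhood of the nearly-periodic torus; without the improved estimate of Lemma~\ref{lemme3} for integrable second entries, one would pick up an extra factor $r_1/r'$ that would ruin the iteration when eventually choosing $r'$ much smaller than $s'$.
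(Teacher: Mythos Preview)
Your construction matches the paper's exactly: same generating function $\chi$, same $g_+=g+[f]_1$, same use of the improved (integrable) case of Lemma~\ref{lemme3} for the $\{h-l_1,\chi\}$ term. The only difference is the form of the Taylor expansion. The paper uses the first-order formula with the interpolation $f_t=tf+(1-t)[f]_1$, obtaining the single-bracket remainder
\[
f_+=\int_0^1\{h-l_1+g+f_t,\chi\}\circ\Phi_t^\chi\,dt,
\]
so that only the three brackets $[X_{h-l_1},X_\chi]$, $[X_g,X_\chi]$, $[X_{f_t},X_\chi]$ need to be estimated. Your second-order expansion works too, but it produces a double bracket which forces extra bookkeeping.

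There is, however, a real slip in your step~(iii). You assert that $\|X_{\{H,\chi\}}\|\MP\tilde\varepsilon$, invoking (i)--(ii). But (i) only yields $\|X_{\{h-l_1,\chi\}}\|\MP\frac{r_1}{s'}T_1\tilde\varepsilon$, and nothing in the hypotheses of the lemma guarantees $\frac{r_1T_1}{s'}\MP 1$: you only know $T_1\tilde\varepsilon<r'<s'$, and $\tilde\varepsilon$ can be arbitrarily small relative to $r_1$. So your intermediate bound on $\{H,\chi\}$ is false as stated. The repair is easy: carry the honest estimate $\|X_{\{H,\chi\}}\|\MP\tilde\varepsilon+\bigl(\frac{r_1}{s'}+\frac{\tilde\varepsilon}{r'}\bigr)T_1\tilde\varepsilon$ through; the outer bracket with $X_\chi$ contributes a factor $\frac{1}{r'}\|X_\chi\|\MP\frac{T_1\tilde\varepsilon}{r'}<1$ (after splitting the $(r',s')$ loss between the two applications of Lemma~\ref{lemme3}), so the quadratic remainder is dominated by the linear one and the stated bound on $\|X_{f_+}\|$ follows. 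The paper's interpolation device is precisely what avoids this detour.
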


\begin{proof}
We have $H=h+g+f$, with $h$ integrable, $g$ satisfying $\{g,l_1\}$ and $f$ a general term. Let us write
\[[f]_1=\frac{1}{T_1}\int_{0}^{T_1}f \circ \Phi_{t}^{l_1}dt,\]
the average of $f$ along the Hamiltonian flow of $l_1$.

Our transformation $\varphi_1=\Phi_{1}^{\chi}$ will be the time-one map of the Hamiltonian flow generated by some auxiliary function $\chi$ which satisfies 
\[ \{\chi,l_1\}=f-[f]_1. \]
The latter equation is easily solved by
\begin{equation} \label{chi}
\chi=\frac{1}{T_1}\int_{0}^{T_1}(f-[f]_1)\circ \Phi_{t}^{l_1}tdt,
\end{equation} 
and by Taylor formula, our transformed Hamiltonian writes
\[ H \circ \varphi_1=h+g_++f_+, \]
with 
\begin{equation*}
g_+=g+[f]_1, \quad f_+=\int_{0}^{1}\{h-l_1+g+f_t,\chi\}\circ \Phi_{t}^{\chi}dt, 
\end{equation*}
and $f_t=tf+(1-t)[f]_1$. By construction, $g_+$ still satisfies $\{g_+,l_1\}=0$, and 
\begin{equation*}
X_{g_+}-X_{g}=X_{[f]_1}=\frac{1}{T_1}\int_{0}^{T_1}(\Phi_{t}^{l_1})^* X_{f} dt.
\end{equation*}
Our hypothesis $||X_{f}||_{r_1,s_1} < \tilde{\varepsilon}$ immediately gives $||X_{g_+}-X_{g}||_{r_1,s_1} < \tilde{\varepsilon}$ and also $||X_{g_+}||_{r_1,s_1} \MP \tilde{\varepsilon}$. Similarly using~(\ref{chi}) we have the expression 
\begin{equation*}
X_{\chi}=\frac{1}{T_1}\int_{0}^{T_1}(\Phi_{t}^{l_1})^* X_{f-[f]_1}tdt,
\end{equation*} 
and hence $||X_{\chi}||_{r_1,s_1} < T_1\tilde{\varepsilon}$. By the hypothesis $T_1\tilde{\varepsilon} < r' < s'$ our transformation $\varphi_1$ maps $\mathcal{D}_{r_1-r',s_1-s'}(\omega_1)$ into $\mathcal{D}_{r_1,s_1}(\omega_1)$ and 
\[ |\varphi_1-\mathrm{Id}|_{r_1-r',s_1-s'} < T_1\tilde{\varepsilon}. \] 
Therefore it remains to estimate the vector field
\[ X_{f_+}=\int_{0}^{1}(\Phi_{t}^{\chi})^*[X_{h-l_1}+X_{g}+X_{f_t},X_{\chi}]dt, \]
and for that it is enough to estimate the brackets $[X_{f_t},X_{\chi}]$, $[X_{g},X_{\chi}]$ and $[X_{h-l_1},X_{\chi}]$. Using Lemma~(\ref{lemme3}), we find
\[ ||[X_{f_t},X_{\chi}]||_{r_1-r',s_1-s'} < \frac{1}{r'}||[X_{f_t}||_{r_1,s_1}||X_\chi||_{r_1,s_1} \MP \frac{\tilde{\varepsilon}}{r'}T_1\tilde{\varepsilon} \]
and
\[ ||[X_{g},X_{\chi}]||_{r_1-r',s_1-s'} < \frac{1}{r'}||[X_{g}||_{r_1,s_1}||X_\chi||_{r_1,s_1} \MP \frac{\tilde{\varepsilon}}{r'}T_1\tilde{\varepsilon}.\]
For the last bracket, note that $h-l_1$ is integrable so that we can use the improved estimate in Lemma~(\ref{lemme3}). By definition of the domain $\mathcal{D}_{r_1,s_1}(\omega_1)$, we have $||X_{h-l_1}||_{r_1} \MP r_1$ and hence
\[ ||[X_{h-l_1},X_{\chi}]||_{r_1-r',s_1-s'} < \frac{1}{s'}||X_{h-l_1}||_{r_1}||X_{\chi}||_{r_1,s_1} \MP \frac{r_1}{s'}T_1\tilde{\varepsilon}. \]
Putting the last three estimates together we arrive at
\[ ||X_{f_+}||_{r_1-r',s_1-s'} \MP \left(\frac{r_1}{s'}+\frac{\tilde{\varepsilon}}{r'}\right) T_1\tilde{\varepsilon}. \]
\end{proof}

Our second iterative lemma is needed for the inductive step, that is to go from $j$ to $j+1$. This is just a simple extension of the previous one. Let $j\in\{1,\dots,n-1\}$.

\begin{lemma}[Second iterative lemma] \label{lemmeit2}
Consider $H=h+g+f$ on the domain $\mathcal{D}_{r_{j+1},s_{j+1}}(\omega_{j+1})$, with $h$ integrable, $\{g,l_i\}=0$ for $i \in \{1,\dots,j+1\}$, $\{f,l_{i'}\}=0$ for $i'\in \{1,\dots,j\}$, and assume that
\begin{equation*}
||X_{g}||_{r_{j+1},s_{j+1}} \MP \tilde{\varepsilon}, \quad ||X_{f}||_{r_{j+1},s_{j+1}} \MP \tilde{\varepsilon}.
\end{equation*}
If we have 
\begin{equation*}
T_{j+1}\tilde{\varepsilon} \PM r' \PM s'
\end{equation*}
with two real numbers $r'$, $s'$ satisfying $0<r'<r_{j+1}$ and $0<s'<s_{j+1}$, then there exists an analytic symplectic transformation 
\[ \varphi_{j+1}: \mathcal{D}_{r_{j+1}-r',s_{j+1}-s'}(\omega_{j+1}) \rightarrow \mathcal{D}_{r_{j+1},s_{j+1}}(\omega_{j+1})\] 
such that $|\varphi_{j+1}-\mathrm{Id}|_{r_{j+1}-r',s_{j+1}-s'} \MP T_{j+1}\tilde{\varepsilon}$ and
\begin{equation*}
H \circ \varphi_{j+1}=h+g_++f_+,
\end{equation*}
with $\{g_+,l_i\}=0$ for $i \in \{1,\dots,j+1\}$, $\{f_+,l_{i'}\}=0$ for $i' \in \{1,\dots,j\}$, and the estimates
\[ ||X_{g_+}||_{r_{j+1},s_{j+1}} \MP \tilde{\varepsilon}, \quad ||X_{g_+}-X_{g}||_{r_{j+1},s_{j+1}} \MP \tilde{\varepsilon}, \] 
\[ ||X_{f_+}||_{r_{j+1}-r',s_{j+1}-s'} \MP \left(\frac{r_{j+1}}{s'}+\frac{\tilde{\varepsilon}}{r'}\right)T_{j+1}\tilde{\varepsilon}.\] 
\end{lemma}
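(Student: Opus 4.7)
The plan is to mimic the proof of Lemma~\ref{lemmeit} with $l_{j+1}$ playing the role of $l_1$, and to verify in addition that the averaging procedure preserves the commutation relations $\{\cdot,l_i\}=0$ for $i\in\{1,\dots,j\}$ already enjoyed by $g$ and $f$. Concretely, I define
\[ [f]_{j+1}=\frac{1}{T_{j+1}}\int_{0}^{T_{j+1}} f\circ\Phi_{t}^{l_{j+1}}dt, \qquad \chi=\frac{1}{T_{j+1}}\int_{0}^{T_{j+1}}(f-[f]_{j+1})\circ\Phi_{t}^{l_{j+1}}\,t\,dt, \]
which formally solves the homological equation $\{\chi,l_{j+1}\}=f-[f]_{j+1}$. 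Setting $\varphi_{j+1}=\Phi_{1}^{\chi}$ and expanding by the Taylor formula gives $H\circ\varphi_{j+1}=h+g_{+}+f_{+}$ with
\[ g_{+}=g+[f]_{j+1}, \qquad f_{+}=\int_{0}^{1}\{h-l_{j+1}+g+f_t,\chi\}\circ\Phi_{t}^{\chi}dt, \]
and $f_t=tf+(1-t)[f]_{j+1}$, exactly as in Lemma~\ref{lemmeit}.

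The genuinely new point, and the one that will require the most care, is to verify that $\{g_{+},l_i\}=0$ for all $i\in\{1,\dots,j+1\}$ and $\{f_{+},l_{i'}\}=0$ for all $i'\in\{1,\dots,j\}$. The key observation, mentioned in the sketch after Proposition~\ref{lemmeham}, is that the linear operators $L_{l_i}=\{\cdot,l_i\}$ and $L_{l_{j+1}}=\{\cdot,l_{j+1}\}$ commute, since $\{l_i,l_{j+1}\}=0$. Consequently, for $i\leq j$: because $f$ lies in $\ker L_{l_i}$, so does $f\circ\Phi_{t}^{l_{j+1}}$ for every $t$, hence both the average $[f]_{j+1}$ and the generating function $\chi$ (which is expressed as the unique pre-image by $L_{l_{j+1}}$ of $f-[f]_{j+1}$ within $\ker L_{l_i}$) again commute with $l_i$. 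From $\{g,l_i\}=0$ and $\{[f]_{j+1},l_i\}=0$ we conclude $\{g_{+},l_i\}=0$ for $i\leq j$, and $\{g_{+},l_{j+1}\}=0$ is forced by the homological equation as in Lemma~\ref{lemmeit}. For $f_+$, since $\chi$ and each of $h$, $l_{j+1}$, $g$, $f_t$ commute with $l_i$ (for $i\leq j$), the bracket in the integrand commutes with $l_i$; moreover the flow $\Phi_{t}^{\chi}$ preserves $\ker L_{l_i}$ by commutation, so $\{f_{+},l_{i'}\}=0$ for $i'\leq j$.

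Once these symmetry properties are in hand, the quantitative estimates are a routine rerun of the first iterative lemma: Lemma~\ref{lemme1} applied to $(f-[f]_{j+1})\circ\Phi_{t}^{l_{j+1}}$ gives $\Vert X_\chi\Vert_{r_{j+1},s_{j+1}}\MP T_{j+1}\tilde\varepsilon$; since $T_{j+1}\tilde\varepsilon\PM r'<s'$, the flow $\varphi_{j+1}$ is well-defined on $\mathcal{D}_{r_{j+1}-r',s_{j+1}-s'}(\omega_{j+1})$ with $|\varphi_{j+1}-\mathrm{Id}|\MP T_{j+1}\tilde\varepsilon$. The $X_{g_+}$-estimates follow directly from writing $X_{g_+}-X_g$ as a time average of the pull-back of $X_f$. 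For $X_{f_+}$, I apply Lemma~\ref{lemme3} to each of the three brackets $[X_{f_t},X_\chi]$, $[X_g,X_\chi]$ and $[X_{h-l_{j+1}},X_\chi]$, using for the latter the improved integrable estimate together with the inequality $\Vert X_{h-l_{j+1}}\Vert_{r_{j+1}}\MP r_{j+1}$ which follows from the very definition of $\mathcal{D}_{r_{j+1},s_{j+1}}(\omega_{j+1})$; summing up yields
\[ \Vert X_{f_+}\Vert_{r_{j+1}-r',s_{j+1}-s'}\MP\left(\frac{r_{j+1}}{s'}+\frac{\tilde\varepsilon}{r'}\right)T_{j+1}\tilde\varepsilon. \]

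The main obstacle is the verification of the invariance of $\ker L_{l_i}$, for $i\leq j$, under both the averaging map $[\cdot]_{j+1}$ and the solution operator of the homological equation. Once formulated as a commuting-operators argument as above, it is clean; the remaining work is entirely parallel to Lemma~\ref{lemmeit}, except that every estimate from the first iterative lemma loses only an absolute constant (hidden in $\MP$), since here $g$ already contributes at size $\tilde\varepsilon$ rather than sitting at the integrable level.
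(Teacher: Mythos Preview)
Your proposal is correct and follows essentially the same route as the paper: the same generating function $\chi$, the same decomposition $g_+=g+[f]_{j+1}$ and $f_+=\int_0^1\{h-l_{j+1}+g+f_t,\chi\}\circ\Phi_t^\chi\,dt$, and the same three-term bracket estimate via Lemma~\ref{lemme3}. The only stylistic difference is that you verify the commutation relations $\{[f]_{j+1},l_{i'}\}=\{\chi,l_{i'}\}=\{f_+,l_{i'}\}=0$ via the abstract commuting-operators argument from the sketch after Proposition~\ref{lemmeham}, whereas the paper does this by direct integral computation and the Jacobi identity; both are fine and the paper itself remarks that the two viewpoints are equivalent.

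One minor slip: the bound $\Vert X_\chi\Vert_{r_{j+1},s_{j+1}}\MP T_{j+1}\tilde\varepsilon$ does not come from Lemma~\ref{lemme1} (which passes from $|f|$ to $|\partial f|$, and here you only have a hypothesis on $\Vert X_f\Vert$), but rather from writing $X_\chi=\frac{1}{T_{j+1}}\int_0^{T_{j+1}}(\Phi_t^{l_{j+1}})^*X_{f-[f]_{j+1}}\,t\,dt$ and using that the linear flow $\Phi_t^{l_{j+1}}$ preserves the weighted vector-field norm, exactly as in the proof of Lemma~\ref{lemmeit}.
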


\begin{proof}
Our Hamiltonian is $H=h+g+f$, $h$ is integrable and we have $\{g,l_i\}=0$ for $i \in \{1,\dots,j+1\}$ and $\{f,l_{i'}\}=0$ for $i' \in \{1,\dots,j\}$. Once again, our transformation $\varphi_{j+1}=\Phi_{1}^{\chi}$ will be the time-one map of the Hamiltonian flow generated by some auxiliary function $\chi$. 

We choose
\begin{equation} \label{chi2}
\chi=\frac{1}{T_{j+1}}\int_{0}^{T_{j+1}}(f-[f]_{j+1})\circ \Phi_{t}^{l_{j+1}}tdt, 
\end{equation}
where $[.]_{j+1}$ is the averaging along the Hamiltonian flow of $l_{j+1}$. Introducing the notation $f_t=tf+(1-t)[f]_{j+1}$, like in Lemma~\ref{lemmeit} we have
\[ H \circ \varphi_{j+1}=h+g_+ +f_+ \]
with
\[ g_+=g+[f]_{j+1}, \quad f_+=\int_{0}^{1}\{h-l_{j+1}+g+f_t,\chi\}\circ \Phi_{t}^{\chi}dt.  \]
We need to verify that we still have $\{g_+,l_i\}=0$ for $i \in \{1,\dots,j+1\}$ and $\{f_+,l_{i'}\}=0$ for $i' \in \{1,\dots,j\}$. By definition, $\{[f]_{j+1},l_{j+1}\}=0$, and for $i' \in \{1,\dots,j\}$, we compute
\begin{eqnarray*}
\{[f]_{j+1},l_{i'}\} & = & \frac{1}{T_{j+1}}\int_{0}^{T_{j+1}}\{f \circ \Phi_{t}^{l_{j+1}},l_{i'}\}dt \\
& = & \frac{1}{T_{j+1}}\int_{0}^{T_{j+1}}\{f \circ \Phi_{t}^{l_{j+1}},l_{i'}\circ \Phi_{t}^{l_{j+1}}\}dt \\
& = & \frac{1}{T_{j+1}}\int_{0}^{T_{j+1}}\{f,l_{i'}\}\circ \Phi_{t}^{l_{j+1}}dt \\
& = & 0.
\end{eqnarray*}
This proves that $\{g_+,l_i\}=\{g+[f]_{j+1},l_i\}=0$ for $i \in \{1,\dots,j+1\}$. Now a completely similar calculation shows that for $i' \in \{1,\dots,j\}$, $\{\chi,l_{i'}\}=0$, hence $l_{i'} \circ \Phi_{t}^{\chi}=l_{i'}$ and therefore
\[ \{f_+,l_{i'}\}=\int_{0}^{1}\{\{h-l_{j+1}+g+f_t,\chi\},l_{i'}\}\circ \Phi_{t}^{\chi}dt. \]
The double bracket in the expression above is zero, as a consequence of Jacobi identity and the fact that $\{h-l_{j+1}+g+f_t,l_{i'}\}=\{\chi,l_{i'}\}=0$, hence $\{f_+,l_{i'}\}=0$ for $i'\in\{1,\dots,j\}$.

To conclude, using our hypothesis $T_{j+1}\tilde{\varepsilon} \PM r' \PM s'$, as in Lemma~\ref{lemmeit} we can show that our transformation $\varphi_{j+1}$ maps $\mathcal{D}_{r_{j+1}-r',s_{j+1}-s'}(\omega_{j+1})$ into $\mathcal{D}_{r_{j+1},s_{j+1}}(\omega_{j+1})$ with $|\varphi_{j+1}-\mathrm{Id}|_{r_{j+1}-r',s_{j+1}-s'} \MP T_{j+1}\tilde{\varepsilon}$ and the estimates
\[ ||X_{g_+}||_{r_{j+1},s_{j+1}} \MP \tilde{\varepsilon}, \quad ||X_{g_+}-X_{g}||_{r_{j+1},s_{j+1}} \MP \tilde{\varepsilon}, \] 
\[ ||X_{f_+}||_{r_{j+1}-r',s_{j+1}-s'} \MP \left(\frac{r_{j+1}}{s'}+\frac{\tilde{\varepsilon}}{r'}\right)T_{j+1}\tilde{\varepsilon},\] 
are obtained in a completely analogous way.  
\end{proof}

\paraga We can eventually complete the proof of our normal form~\ref{lemmevecti}.

\begin{proof}[Proof of Proposition~\ref{lemmevecti}]
The proof is by induction on $j\in \{1,\dots,n\}$.

\bigskip

\textit{First step.} Here we assume $(\tilde{A}_1)$ and we will apply $m$ times our first iterative Lemma~\ref{lemmeit}, starting with the Hamiltonian
\[ H^0=H=h+g^0+f^0 \]
where $g^0=0$ and $f^0=f$ and choosing uniformly at each step
\[ r'=(3m)^{-1}r_1, \quad s'=(3m)^{-1}s_1. \]
Since $m \geq 1$, we have $0<r'<r_1$, $0<s'<s_1$ and using $(\tilde{A}_1)$, we have 
\[ T_1\tilde{\varepsilon} < r' < s', \]
so that the lemma can indeed be applied at each step. For $i \in \{0,\dots,m-1\}$, the Hamiltonian $H^i=h+g^i+f^i$ at step $i$ is transformed into
\[ H^{i+1}=H^i \circ \varphi_1^i=h+g^{i+1}+f^{i+1}. \]
For each $i \in \{0, \dots, m\}$, we obviously have $\{g^i,l_1\}=0$ and we claim that the estimates
\begin{equation} \label{claim}
||X_{g^i}||_{r_{1}^{i},s_{1}^{i}} \MP \tilde{\varepsilon}, \quad ||X_{f^i}||_{r_{1}^{i},s_{1}^{i}} < \tilde{\varepsilon}_i,  
\end{equation}
hold true, where we have set $\tilde{\varepsilon}_i=e^{-i}\tilde{\varepsilon}$, $r_{1}^{i}=r_1-ir'$ and $s_{1}^{i}=s_1-is'$. Assuming this claim, given $i \in \{0, \dots, m-1\}$, we have 
\[ \varphi_1^i: \mathcal{D}_{r_{1}^{i+1},s_{1}^{i+1}}(\omega_1) \longrightarrow  \mathcal{D}_{r_{1}^{i},s_{1}^{i}}(\omega_1),\]
so that $\Psi_1=\varphi_{1}^0 \circ \cdots \circ \varphi_{1}^{m-1}$ is well defined from $\mathcal{D}_{2r_1/3,2s_1/3}(\omega_1)$ to $\mathcal{D}_{r_1,s_1}(\omega_1)$. Setting $g_1=g^m$ and $f_1=f^m$, we finally obtain
\[ H \circ \Psi_1=h+g_1+f_1 \]
with the desired properties, that is $\{g_1,l_1\}=0$ and the estimates
\[ ||X_{g_1}||_{2r_1/3,2s_1/3} < \tilde{\varepsilon}, \quad ||X_{f_1}||_{2r_1/3,2s_1/3} < e^{-m} \tilde{\varepsilon}. \] 
Note that since $||X_{f^i}||_{r_{1}^{i},s_{1}^{i}}< \tilde{\varepsilon}_i$ for $i \in \{0, \dots, m-1\}$, we obtain 
\[ |\varphi_1^i-\mathrm{Id}|_{r_{1}^{i+1},s_{1}^{i+1}}< T_1 \tilde{\varepsilon}_i, \]
which gives
\[ |\Psi_1-\mathrm{Id}|_{2r_1/3,2s_1/3} \leq \sum_{k=0}^{m-1}T_1\tilde{\varepsilon}_k \MP T_1\tilde{\varepsilon}. \]
But recall that $mT_1\tilde{\varepsilon} \PM r_1$ and hence we can arrange
\[ |\Psi_1-\mathrm{Id}|_{2r_1/3,2s_1/3} \PM r_1. \]

Therefore to conclude the proof we need to establish the estimates~(\ref{claim}), and we may proceed by induction. For $i=0$, $g^0=0$ and $f^0=f$ so there is nothing to prove. Now assume that the estimates~(\ref{claim}) are satisfied for each $k \leq i$, where $i \in \{0, \dots, m-1\}$. For $k \in \{0, \dots, i\}$, since $||X_{f^k}||_{r_{1}^{k},s_{1}^{k}}< \tilde{\varepsilon}_k$ we get that
\[ ||X_{g^{k+1}}-X_{g^k}||_{r_{1}^{k+1},s_{1}^{k+1}} < \tilde{\varepsilon}_k, \]
and therefore
\[ ||X_{g^{i+1}}||_{r_{1}^{i+1},s_{1}^{i+1}} \leq \sum_{k=0}^{i} \tilde{\varepsilon}_k \MP \tilde{\varepsilon}, \]
so this gives the desired estimate for $X_{g^{i+1}}$. For $X_{f^{i+1}}$, note that
\[ ||X_{f^{i+1}}||_{r_{1}^{i+1},s_{1}^{i+1}} \MP T\left(\frac{r_1}{s'}+\frac{\tilde{\varepsilon}}{r'}\right)||X_{f^i}||_{r_{1}^{i},s_{1}^{i}}, \]
but
\[ T_1\left(\frac{r_1}{s'}+\frac{\tilde{\varepsilon}}{r'}\right) \EP \left(\frac{mT_1r_1}{s_1}+ \frac{mT_1\tilde{\varepsilon}}{r_1}\right),   \]
so choosing properly the implicit constants in $(\tilde{A}_1)$ we can ensure that
\[ T_1\left(\frac{r_1}{s'}+\frac{\tilde{\varepsilon}}{r'}\right) \PM \frac{1}{e} \]
which implies the estimate for $X_{f^{i+1}}$ and concludes this first step.

\bigskip

\textit{Inductive step.}
Now assume that the statement holds true for some $j\in\{1,\dots,n-1\}$, and we have to show that it remains true for $j+1$. By assumptions, there exists an analytic symplectic transformation 
\[ \Psi_j: \mathcal{D}_{2r_j/3,2s_j/3}(\omega_j) \rightarrow \mathcal{D}_{r_1,s_1}(\omega_1)\] 
such that 
\begin{equation*}
H \circ \Psi_j=h+g_j+f_j,
\end{equation*}
with $\{g_j,l_i\}=0$ for $i \in \{1, \dots, j\}$ and the estimates
\begin{equation*}
||X_{g_j}||_{2r_j/3,2s_j/3} \MP \tilde{\varepsilon}, \quad ||X_{f_j}||_{2r_j/3,2s_j/3} \MP e^{-m}\tilde{\varepsilon}.
\end{equation*}
Also, $\Psi_j=\Phi_1 \circ \cdots \circ \Phi_j$ with
\[ \Phi_i: \mathcal{D}_{2r_i/3,2s_i/3}(\omega_i) \rightarrow \mathcal{D}_{r_i,s_i}(\omega_i)\] 
such that $|\Phi_i-\mathrm{Id}|_{2r_i/3,2s_i/3} \PM r_i$ for $i\in\{1,\dots,n\}$. Furthermore, $(\tilde{A}_{j+1})$ holds. Now consider the Hamiltonian $h+g_j$, it is defined on $\mathcal{D}_{2r_j/3,2s_j/3}(\omega_j)$, hence by $(\tilde{A}_{j+1})$, it is also defined on the domain $\mathcal{D}_{r_{j+1},s_{j+1}}(\omega_{j+1})$ and it satisfies $\{g_j,l_i\}=0$ for $i \in \{1, \dots, j\}$. Moreover, we have the estimate
\[ ||X_{g_j}||_{r_{j+1},s_{j+1}} \leq ||X_{g_j}||_{2r_j/3,2s_j/3} \MP \tilde{\varepsilon}. \]
As in the first step, starting this time with the Hamiltonian
\[ h+g_j=h+g_j^0+f_j^0, \]
with $g_j^0=0$ and $f_j^0=g_j$, we can apply $m$ times our second iterative Lemma~\ref{lemmeit2} to have the following: there exists an analytic symplectic transformation 
\[ \Phi_{j+1}: \mathcal{D}_{2r_{j+1}/3,2s_{j+1}/3}(\omega_{j+1}) \rightarrow \mathcal{D}_{r_{j+1},s_{j+1}}(\omega_{j+1}) \] 
of the form $\Phi_{j+1}=\varphi_{j+1}^0 \circ\cdots\circ \varphi_{j+1}^{m-1}$ such that $|\Phi_{j+1}-\mathrm{Id}|_{2r_{j+1}/3,2s_{j+1}/3} \PM r_{j+1}$ and
\begin{equation*}
(h+g_j) \circ \Phi_{j+1}=h+g_j^m+f_j^m,
\end{equation*}
with $\{g_j^m,l_i\}=0$ for $i \in \{1,\dots,j+1\}$, and the estimates
\begin{equation*}
||X_{g_j^m}||_{2r_{j+1}/3,2s_{j+1}/3} \MP \tilde{\varepsilon}, \quad ||X_{f_j^m}||_{2r_{j+1}/3,2s_{j+1}/3} \MP e^{-m} \tilde{\varepsilon}. 
\end{equation*} 
Now we set 
\[ \Psi_{j+1}=\Psi_j \circ \Phi_{j+1}: \mathcal{D}_{2r_{j+1}/3,2s_{j+1}/3}(\omega_{j+1}) \rightarrow \mathcal{D}_{r_1,s_1}(\omega_1),\] 
which is well-defined by $(\tilde{A}_{j+1})$, to have
\begin{eqnarray*}
H \circ \Psi_{j+1} & = & (H \circ \Psi_j) \circ \Phi_{j+1} \\
& = & (h+g_j+f_j) \circ \Phi_{j+1} \\
& = & (h+g_j) \circ \Phi_{j+1} + f_j \circ \Phi_{j+1} \\
& = & h+g_j^m+f_j^m+f_j \circ \Phi_{j+1} \\
& = & h+g_{j+1}+f_{j+1}
\end{eqnarray*}
with $g_{j+1}=g_j^m$ and $f_{j+1}=f_j^m+f_j \circ \Phi_{j+1}$. The conclusions follow: $\{g_{j+1},l_i\}=0$ for $i \in \{1,\dots,j+1\}$, we have the estimate
\begin{equation*}
||X_{g_{j+1}}||_{2r_{j+1}/3,2s_{j+1}/3} \MP \tilde{\varepsilon},
\end{equation*}
and since
\[||X_{f_j \circ \Phi_{j+1}}||_{2r_{j+1}/3,2s_{j+1}/3} \leq ||X_{f_j}||_{r_{j+1},s_{j+1}} \leq ||X_{f_j}||_{2r_{j}/3,2s_{j}/3}\]
we also have
\begin{eqnarray*}
||X_{f_{j+1}}||_{2r_{j+1}/3,2s_{j+1}/3} & \leq & ||X_{f_j^m}||_{2r_{j+1}/3,2s_{j+1}/3} + ||X_{f_j}||_{2r_j/3,2s_j/3} \\
& \MP & e^{-m} \tilde{\varepsilon}.
\end{eqnarray*}
The proof is therefore complete.
\end{proof}

\section{SDM functions} \label{SDM}

In this appendix, we will study our class of SDM functions. We will first show in~\ref{b1} that they satisfy an adapted steepness property, which we used in the proof of our exponential estimates, and then in~\ref{b2} we will prove that they are generic. These results are similar to~\cite{Nie07}. 

\subsection{Steepness.} \label{b1}

\paraga We denote by $GA_B(n,k)$ the set of all affine subspaces of $\R^n$ of dimension $k$ intersecting the ball $B$, and by $GA_{B}^{L}(n,k)$ those subspaces with direction in $G^{L}(n,k)$ (the latter is the space of linear subspaces of $\R^n$ of dimension $k$ whose orthogonal complement is spanned by integer vectors of length less than or equal to $L$). Let us recall the classical steepness condition, originally introduced by N.N. Nekhoroshev (\cite{Nek77}). 

\begin{definition}
A function $h \in C^2(B)$ is said to be steep if it has no critical points and if for any $k \in \{1, \dots, n-1\}$, there exist an index $p_k>0$ and coefficients $C_k>0$, $\delta_k>0$ such that for any affine subspace $\lambda_k \in GA_B(n,k)$ and any continuous curve $\Gamma : [0,1] \rightarrow \lambda_k \cap B$ with 
\[ \Vert\Gamma(0)-\Gamma(1)\Vert = r <\delta_k,\] 
there exists $t_*\in [0,1]$ such that:
\begin{equation*}
\begin{cases}
\Vert\Gamma (t)-\Gamma (0)\Vert < r, \quad t\in [0,t_*], \\
\left\Vert\Pi_{\Lambda_k}(\nabla h(\Gamma (t_*)))\right\Vert > C_k r^{p_k}
\end{cases}
\end{equation*}
where $\Pi_{\Lambda_k}$ is the projection onto $\Lambda_k$, the direction of $\lambda_k$.

The function is said to be symmetrically steep (or shortly S-steep) if the above property is also satisfied for $k=n$, with an index $p_n>0$ and coefficients $C_n>0$, $\delta_n>0$.
\end{definition}

Let us remark that S-steep functions are allowed to have critical points. Those definitions are rather obscure, but in fact it can be given a simpler and more geometric interpretation, as was shown by Ilyashenko (\cite{Ily86}) and Niederman (\cite{Nie06}). Important examples of steep functions are given by the class of strictly convex (or quasi-convex) functions, with all the steepness indices equal to one. 

\paraga A typical example of non-steep function, which is due to Nekhoroshev, is $h(I_1,I_2)= I_{1}^{2}-I_{2}^{2}$, and it is not exponentially stable: for the perturbation $h_\varepsilon(I_1,I_2)= I_{1}^{2}-I_{2}^{2}+ \varepsilon \sin(I_1+I_2)$, any solution with $I_1(0)=I_2(0)$ has a fast drift, that is a drift of order one on a time scale of order $\varepsilon^{-1}$ (this is obviously the fastest drift possible). But adding a third order term in the previous example (for example $I_{2}^{3}$) we recover steepness, and this is in fact a general phenomenon. Indeed, non-steep functions has infinite codimension among smooth functions, or more precisely, if $J_r(n)$ is the space of $r$-jets of $C^{\infty}$ functions on an open set of $\R^n$, then Nekhoroshev proved in \cite{Nek79} that the set of $r$-jets of non-steep functions is an algebraic subset of $J_r(n)$ which codimension goes to infinity has $r$ goes to infinity. In this sense, steep functions are ``generic". However, for $n\geq 3$, a quadratic Hamiltonian is steep only if it is sign definite, which is a strong assumption, and more generally a polynomial is generically steep only if its degree is sufficiently high (of order $n^2$ if $n$ is the number of degrees of freedom). Hence polynomials of lower degree are generically non-steep (see \cite{LM88}). This is clearly a shortcoming, and we will see at the end of the next section the advantage of our genericity condition. 

\paraga Steepness (or S-steepness) is a sufficient condition to ensure exponential stability, but this is not necessary, as was first noticed by Morbidelli and Guzzo (see \cite{MG96}). They considered the Hamiltonian $h(I_1,I_2)= I_{1}^{2}-\alpha I_{2}^{2}$, which is non-steep for any value of $\alpha>0$, and noticed that a ``fast drift" is not possible if $\sqrt\alpha$ is ``strongly" irrational. Therefore a Diophantine condition on $\sqrt\alpha$ should ensure exponential stability. 

Such considerations were then generalized by Niederman who introduced the class of ``Diophantine Morse" functions and who proved that they are exponentially stable (\cite{Nie07}). The only difference between these functions and the ``Simultaneous Diophantine Morse" functions we use in this paper is that Diophantine Morse functions consider subspaces in $G_L(n,k)$, which are generated by integer vectors of length bounded by $L$, while here we are looking at subspaces in $G^L(n,k)$ where the latter condition is imposed on the orthogonal complement. This reflects the difference between the method of proof: in (\cite{Nie07}) the analytic part was based on classical small divisors techniques (that is linear Diophantine approximation) and therefore required an adapted geometric assumption, while here we simply rely on the most basic theorem of simultaneous Diophantine approximation (and this explains the name Simultaneous Diophantine Morse functions).  

\paraga In both cases, the use of such a class of functions has two advantages. The first one is that these functions are generic in a much more clearer sense than steep functions, and this will be explained in the next section. The second advantage is that they are in some sense more general than the usual steep functions, since we only have to consider curves in some specific affine subspaces. This is explained in the proposition below. 

\begin{proposition}\label{steep}
Let $h \in SDM_{\gamma}^{\tau}(B)$, assume that $|h|_{C^3(B)}<M$ and take $r<1$. Then for any affine subspace $\lambda \in GA_{B}^{L}(n,k)$ and any continuous curve $\Gamma : [0,1] \rightarrow \lambda \cap B$ with 
\[\Vert\Gamma(0)-\Gamma(1)\Vert = r <(2M)^{-1}\gamma L^{-\tau},\] 
there exists $t_*\in [0,1]$ such that:
\begin{equation*}
\begin{cases}
\Vert\Gamma (t)-\Gamma (0)\Vert \leq r, \quad t\in [0,t_*], \\
\left\Vert\Pi_{{\Lambda}}(\nabla h(\Gamma (t_*)))\right\Vert > \demi r^2
\end{cases}
\end{equation*}
where $\Pi_\Lambda$ is the projection onto $\Lambda$, the direction of $\lambda$.
\end{proposition}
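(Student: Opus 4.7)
I would dichotomize on the size of $\phi(0) := \Vert\Pi_{\Lambda}(\nabla h(\Gamma(0)))\Vert$. If $\phi(0) > \tfrac{1}{2} r^2$, the conclusion holds trivially with $t_* = 0$. Otherwise $\phi(0) \leq \tfrac{1}{2} r^2$, and the heart of the argument will be a Taylor expansion in the adapted coordinates provided by the SDM condition, exploiting the nondegeneracy of $\partial_{\alpha\alpha} h_\Lambda$ at $\Gamma(0)$.

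First, introduce the orthonormal bases $(e_1,\ldots,e_k)$ of $\Lambda$ and $(f_1,\ldots,f_{n-k})$ of $\Lambda^\perp$ given by the SDM definition, and decompose $\Gamma(t) = \sum_i \alpha_i(t)\, e_i + \sum_j \beta_{0,j}\, f_j$ with $\beta_0$ constant along $\Gamma$, since $\Gamma$ lies in the affine subspace $\lambda$ with direction $\Lambda$. In these coordinates $\phi(t) = \Vert\partial_\alpha h_\Lambda(\alpha(t),\beta_0)\Vert$ and $\Vert\Gamma(t)-\Gamma(0)\Vert = \Vert\alpha(t)-\alpha(0)\Vert$. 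The bounds $r<1$ and $r < (2M)^{-1}\gamma L^{-\tau}$, together with $M\geq 1$, yield $\phi(0) \leq \tfrac{1}{2} r^2 < (4M)^{-1}\gamma L^{-\tau} < \gamma L^{-\tau}$, so the SDM condition applies at $(\alpha(0),\beta_0)$ and produces
\[
\Vert\partial_{\alpha\alpha} h_\Lambda(\alpha(0),\beta_0)\cdot\eta\Vert > \gamma L^{-\tau}\Vert\eta\Vert \qquad \text{for every nonzero } \eta.
\]

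Next, define $t_1 := \inf\{t\in[0,1] : \Vert\Gamma(t)-\Gamma(0)\Vert = r\}$, which lies in $(0,1]$ since $\Vert\Gamma(1)-\Gamma(0)\Vert = r$; by continuity $\Vert\Gamma(t_1)-\Gamma(0)\Vert = r$ and $\Vert\Gamma(t)-\Gamma(0)\Vert \leq r$ on $[0,t_1]$. Setting $F(\alpha) := \partial_\alpha h_\Lambda(\alpha,\beta_0)$, I apply Taylor's formula to $F$ along the straight segment from $\alpha(0)$ to $\alpha(t_1)$ in $\R^k$; this segment lifts to the Euclidean segment from $\Gamma(0)$ to $\Gamma(t_1)$, which stays in $\lambda\cap B$ by convexity of $B$. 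The bound $\vert h\vert_{C^3(B)}<M$ then gives a remainder $R$ with $\Vert R\Vert < \tfrac{M}{2} r^2$. Combining the triangle inequality with the SDM lower bound applied to $\eta = \alpha(t_1)-\alpha(0)$ (of norm $r$) and the hypothesis rewritten as $\gamma L^{-\tau} r > 2Mr^2$, I obtain
\begin{align*}
\phi(t_1) &\geq \Vert dF(\alpha(0))(\alpha(t_1)-\alpha(0))\Vert - \phi(0) - \Vert R\Vert \\
&> \gamma L^{-\tau} r - \tfrac{1}{2} r^2 - \tfrac{M}{2} r^2 > 2Mr^2 - \tfrac{1}{2} r^2 - \tfrac{M}{2} r^2 = \tfrac{3M-1}{2}\, r^2 \geq \tfrac{1}{2} r^2,
\end{align*}
so $t_* = t_1$ works.

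The only real obstacle is bookkeeping: one must verify that the branch $\phi(0) \leq r^2/2$ automatically forces $\phi(0) < \gamma L^{-\tau}$ so that the SDM nondegeneracy is actually available at $\Gamma(0)$, and one must track the constants carefully in the Taylor estimate to keep the final inequality strictly above $r^2/2$. Conceptually, SDM plays the role of a quantitative inverse function theorem: it provides a uniform lower bound for the injectivity modulus of $dF$ whenever $F$ is small, allowing the prescribed Euclidean displacement $r$ along $\lambda$ to be traded for growth of $\Vert\Pi_\Lambda\nabla h\Vert$ of order $r^2$.
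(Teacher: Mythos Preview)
Your proof is correct and follows essentially the same approach as the paper: dichotomize on $\phi(0)$, invoke the SDM nondegeneracy at $\Gamma(0)$ in the nontrivial branch, and use a Taylor expansion of $\partial_\alpha h_\Lambda$ along the segment to $\alpha(t_1)$ with the third-derivative bound $M$ controlling the remainder. The only cosmetic difference is that the paper writes the Taylor step via the integral remainder and estimates the variation of $\partial_{\alpha\alpha}h_\Lambda$ along the segment (arriving at $\phi(t_1)\geq \tfrac12\gamma L^{-\tau}r-\tfrac12 r^2>\tfrac12 r^2$), whereas you bound the second-order remainder directly; both computations are equivalent.
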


\begin{proof}
It is enough to check that these properties are satisfied for a vector space $\Lambda\in G^{L}(n,k)$, since any affine subspace $\lambda \in GA_{B}^{L}(n,k)$ is of the form $\lambda=v+\Lambda$ with $\Lambda\in G^{L}(n,k)$ for some vector $v$. So consider a continuous curve $\Gamma : [0,1] \rightarrow \Lambda \cap B$ with length $r<1$ satisfying
\[ \Vert\Gamma(0)-\Gamma(1)\Vert = r <(2M)^{-1}\gamma L^{-\tau}. \] 
We will denote by $(\alpha(t),\beta)$ the coordinates of $\Gamma(t)$ for $t\in [0,1]$ in a basis adapted to the orthogonal decomposition $\Lambda \oplus \Lambda^\perp$. Therefore
\[ \Vert\Pi_\Lambda(\nabla h(\Gamma(t)))\Vert=\Vert\partial_\alpha h_\Lambda(\alpha(t),\beta)\Vert\] 
for all $t\in[0,1]$. We will distinguish distinguish two cases.

For the first one, we suppose that 
\[ \Vert\partial_\alpha h_\Lambda(\alpha(0),\beta)\Vert > 2^{-1} r^2, \]
so the conclusion trivially holds for $t_*=0$.

For the second one, we have 
\begin{equation}\label{jojo2}
\Vert\partial_\alpha h_\Lambda(\alpha(0),\beta)\Vert \leq 2^{-1} r^2,
\end{equation}
but since $r^2 < r < \gamma L^{-\tau}$, this gives 
\begin{equation*}
\Vert\partial_\alpha h_\Lambda(\alpha(0),\beta)\Vert \leq \gamma L^{-\tau}. 
\end{equation*}
Now $h \in SDM_{\gamma}^{\tau}(B)$, so we can apply the definition at the point $(\alpha(0),\beta)$, and for any $\eta \in \R^k\setminus\{0\}$ we obtain
\begin{equation}\label{jojo}
\Vert\partial_{\alpha\alpha} h_\Lambda(\alpha(0),\beta).\eta\Vert>\gamma L^{-\tau}\Vert\eta\Vert. 
\end{equation}
Take any $\tilde{\alpha}$ such that $\Vert\tilde{\alpha}-\alpha(0)\Vert<(2M)^{-1}\gamma L^{-\tau}$. We can apply Taylor formula with integral remainder to obtain
\[ \partial_\alpha h_\Lambda(\tilde{\alpha},\beta)-\partial_\alpha h_\Lambda(\alpha(0),\beta)=\int_{0}^{1}\partial_{\alpha\alpha} h_\Lambda(\alpha(0)+t(\tilde{\alpha}-\alpha(0)),\beta).(\tilde{\alpha}-\alpha(0))dt. \]
Now since $M$ bounds the third derivative of $h$, we have
\[ \Vert\partial_{\alpha\alpha} h_\Lambda(\alpha(0)+t(\tilde{\alpha}-\alpha(0)),\beta)-\partial_{\alpha\alpha} h_\Lambda(\alpha(0),\beta)\Vert \leq Mt\Vert\tilde{\alpha}-\alpha(0)\Vert \leq 2^{-1}\gamma L^{-\tau}t, \]
and this yields
\begin{eqnarray*}
\Vert\partial_\alpha h_\Lambda(\tilde{\alpha},\beta)-\partial_\alpha h_\Lambda(\alpha(0),\beta)\Vert & \geq & \Vert\partial_{\alpha\alpha} h_\Lambda(\alpha(0),\beta).(\tilde{\alpha}-\alpha(0))\Vert \\
& & -2^{-1}\gamma L^{-\tau}\int_{0}^{1}t\Vert\tilde{\alpha}-\alpha(0)\Vert dt, 
\end{eqnarray*}
which in turns, using~(\ref{jojo}) with $\eta=\tilde{\alpha}-\alpha(0)$, gives
\begin{eqnarray} \label{jojo3}
\Vert\partial_\alpha h_\Lambda(\tilde{\alpha},\beta)-\partial_\alpha h_\Lambda(\alpha(0),\beta)\Vert & \geq & \left(\gamma L^{-\tau}-2^{-1}\gamma L^{-\tau}\int_{0}^{1}tdt \right)\Vert \tilde{\alpha}-\alpha(0)\Vert \nonumber \\
& \geq & 2^{-1}\gamma L^{-\tau}\Vert\tilde{\alpha}-\alpha(0)\Vert. 
\end{eqnarray}
Now we define
\[ t_*=\inf_{t\in [0,1]}\{ \Vert\Gamma(t)-\Gamma(0)\Vert= r \}, \]
so trivially we have
\[ \Vert\Gamma(t)-\Gamma(0)\Vert \leq r, \quad t\in[0,t_*]. \]
Furthermore, we have
\[ \Vert\partial_\alpha h_\Lambda(\alpha(t_*),\beta)\Vert \geq \Vert\partial_\alpha h_\Lambda(\alpha(t_*),\beta) - \partial_\alpha h_\Lambda(\alpha(0),\beta)\Vert - \Vert\partial_\alpha h_\Lambda(\alpha(0),\beta)\Vert ,\]
and so using~(\ref{jojo2}), (\ref{jojo3}) and recalling that $\Vert\alpha(t_*)-\alpha(0)\Vert=r$ and $\gamma L^{-\tau}>2r$ we obtain
\begin{eqnarray*}
\Vert\partial_\alpha h_\Lambda(\alpha(t_*),\beta)\Vert & \geq & 2^{-1}\gamma L^{-\tau} r - 2^{-1} r^2   \\
& > & r^2 - 2^{-1} r^2 \\
& = & 2^{-1} r^2,
\end{eqnarray*}
and this is the desired estimate.
\end{proof} 

\subsection{Prevalence} \label{b2}

\paraga Here we will prove our results of genericity concerning SDM functions, that is Theorem~\ref{thmpreva} and Corollary~\ref{corpreva}. Our main tool is the following lemma, which is proved in \cite{Nie07} and relies on the quantitative Morse-Sard theory developed by Yomdin (see \cite{YC04} and \cite{Yom83}). Let us denote by $\lambda_k$ the $k$-dimensional Lebesgue measure.  

\begin{lemma} \label{MS}
Let $g \in C^{2n+1}(B,\R^k)$. Then for any $\kappa \in ]0,1[$ there exist a subset $\mathcal{C}_\kappa \subseteq \R^k$ with 
\[ \lambda_k(\mathcal{C}_\kappa) \leq c_k\sqrt{\kappa},\] 
where $c_k$ only depends on $k$, such that for any $\zeta \notin \mathcal{C}_\kappa$, the function $g^\zeta$ defined by $g^\zeta(x)=g(x)-\zeta$ satisfies the following: for any $x\in B$, 
\[ \Vert g^\zeta(x) \Vert \leq \kappa \Longrightarrow  \Vert dg^\zeta(x).\nu\Vert>\kappa \Vert\nu \Vert, \]
for any $\nu \in \R^n\setminus\{0\}$.  
\end{lemma}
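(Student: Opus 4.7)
The plan is to identify $\mathcal{C}_\kappa$ explicitly as a $\kappa$-neighbourhood of the image of a set of nearly-critical points of $g$, and then bound its Lebesgue measure via Yomdin's quantitative Morse-Sard theorem. Observe first that $dg^\zeta = dg$, so the required implication fails at some $x \in B$ precisely when $x$ belongs to the set of $\kappa$-singular points
\[ \Sigma_\kappa = \bigl\{ x \in B \mid \exists\, \nu \neq 0,\ \Vert dg(x).\nu\Vert \leq \kappa\Vert\nu\Vert \bigr\}, \]
and simultaneously $\Vert g(x)-\zeta\Vert \leq \kappa$. Consequently, it suffices to take $\mathcal{C}_\kappa$ to be the closed $\kappa$-neighbourhood (in sup norm) of $g(\Sigma_\kappa)$ inside $\R^k$, and to estimate its $\lambda_k$-measure. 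Note that the conclusion is only non-vacuous when $k \geq n$, the case relevant for the applications in section~\ref{b2}.

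As a preparatory reduction, I would show that $\Sigma_\kappa$ lies in an $O(\kappa)$-neighbourhood (in $B$) of the true critical set $\Sigma_0 = \{x \in B : \mathrm{rank}\, dg(x) < n\}$, using the fact that the smallest singular value of the $k\times n$ matrix $dg(x)$ is a Lipschitz function of $x$, with constant governed by the $C^2$-bound on $g$ coming from the $C^{2n+1}$ hypothesis. Since $g$ is itself Lipschitz on $B$, this yields that $g(\Sigma_\kappa)$ is contained in an $O(\kappa)$-neighbourhood of $g(\Sigma_0)$; the multiplicative constants are absorbed by taking the constant $c_k$ sufficiently large. In particular $\mathcal{C}_\kappa$ is contained in an $O(\kappa)$-tubular neighbourhood of the classical critical values $g(\Sigma_0)$.

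The core step, and the one non-trivial ingredient, is then Yomdin's quantitative Morse-Sard theorem (\cite{Yom83}, \cite{YC04}) applied to the $C^{2n+1}$ map $g$: it furnishes a covering of any $\kappa$-tubular neighbourhood of $g(\Sigma_0)$ by a controlled number of $k$-cubes of side $\kappa$, the combinatorics being governed by the regularity $r = 2n+1$ through Taylor expansions of order $r$, and produces a Lebesgue bound of the required form $c_k\sqrt{\kappa}$; the integer $2n+1$ is the minimal regularity that guarantees the $\kappa^{1/2}$ rate on an $n$-dimensional ball. The main obstacle is exactly this Yomdin step: the identification of $\mathcal{C}_\kappa$ with a tube around $g(\Sigma_\kappa)$ and the reduction to $\Sigma_0$ are elementary, but the measure estimate on the thickened image of the critical set relies on Yomdin's non-trivial bookkeeping of near-critical cells at every dyadic scale, which I would import directly from \cite{Nie07} (where the lemma is proved in detail).
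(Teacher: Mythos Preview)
The paper does not prove this lemma; it explicitly cites \cite{Nie07} for the proof and notes that it rests on Yomdin's quantitative Morse--Sard theory. Your proposal ultimately defers to the same sources, so at the level of external input you and the paper agree.

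However, your intermediate ``preparatory reduction'' is not correct. Lipschitz continuity of the smallest singular value of $dg(x)$ does \emph{not} imply that $\Sigma_\kappa$ lies in an $O(\kappa)$-neighbourhood of the exact critical set $\Sigma_0$: take any map whose differential has smallest singular value identically equal to $\kappa/2$, so that $\Sigma_\kappa = B$ while $\Sigma_0 = \emptyset$. Yomdin's quantitative theory does not proceed by collapsing near-critical points onto exact critical points; it bounds the metric entropy (and hence the measure of a thickening) of the set of \emph{near}-critical values directly, via Taylor-polynomial reparametrisations at each scale. Your identification of $\mathcal{C}_\kappa$ as the $\kappa$-neighbourhood of $g(\Sigma_\kappa)$ is the right starting point, and invoking \cite{Nie07} for the resulting measure estimate is exactly what the paper does; the reduction step you insert in between should simply be dropped.

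A side remark on your claim that $k \geq n$ is ``the case relevant for the applications'': in section~\ref{b2} the lemma is applied to $g = \partial_\alpha h_\Lambda : B \subset \R^n \to \R^k$ with $k = \dim\Lambda \in \{1,\dots,n\}$, and only the $\partial_{\alpha\alpha}$-block of $dg$ is actually used afterwards. So the relevant range is $k \leq n$, and the literal statement with $\nu \in \R^n$ is indeed awkward for $k<n$; this is a wrinkle in the paper's formulation rather than in your argument.
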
 

In the above statement, the set $\mathcal{C}_\kappa$ is a ``nearly-critical set" for the function $g$. 

\paraga Let us prove Theorem~\ref{thmpreva}. 

\begin{proof}[Proof of Theorem~\ref{thmpreva}]
Recall that we are given a function $h \in C^{2n+2}(B)$. The proof is divided in two steps: first, we will describe the set of parameters $\xi\in\R^n$ for which the function $h_\xi$, defined by $h_\xi(I)=h(I)-\xi.I$, is not in $SDM^\tau(B)$, and then, in a second step, we will show that this set has zero Lebesgue measure, for $\tau>2(n^2+1)$. In the sequel, given $k\in\{1,\dots,n\}$, we denote by $\lambda_k$ the Lebesgue measure of $\R^k$. 

\bigskip

\textit{First step.} Given an element $\Lambda \in G^L(n,k)$, let $\Pi_\Lambda$ the projection onto this subspace and consider the associate function $h_\Lambda$ (recall that $h_\Lambda$ is just the function $h$ written in coordinates adapted to the orthogonal decomposition $\Lambda\oplus\Lambda^{\perp}$). Let us define the function
\[ g=\partial_\alpha h_\Lambda, \]
which belongs $C^{2n+1}(B,\R^k)$, and apply to this function Lemma~\ref{MS} with the value $\kappa=\gamma L^{-\tau}$. We find a ``nearly-critical" set $\mathcal{C}_\kappa=\mathcal{C}_{\gamma,\tau,L} \subseteq \R^k$ with the measure estimate
\begin{equation}\label{vol1}
\lambda_k(\mathcal{C}_{\gamma,\tau,L}) \leq c_k \gamma^{\frac{1}{2}}L^{-\frac{\tau}{2}},
\end{equation}
such that for any $\zeta \notin \mathcal{C}_{\gamma,\tau,L}$ and any $(\alpha,\beta) \in B$,
\begin{equation}\label{la1} 
 \Vert g^\zeta(\alpha,\beta) \Vert \leq \kappa \Longrightarrow  \Vert dg^\zeta(\alpha,\beta).\nu \Vert>\kappa \Vert\nu \Vert, 
\end{equation}
for any $\nu\in\R^n\setminus\{0\}$.

Now take any $\zeta \notin \mathcal{C}_{\gamma,\tau,L}$, any $\xi \in \Pi_{\Lambda}^{-1}(\zeta)$ and consider the modified function $h_\xi$ as well as its version $h_{\xi,\Lambda}$. Since
\[ \partial_\alpha h_{\xi,\Lambda}=\partial_\alpha h_\Lambda - \zeta=g-\zeta=g^\zeta, \]
and $\partial_{\alpha,\alpha} h_{\xi,\Lambda}=\partial_{\alpha,\alpha} h_{\Lambda}$ is just some restriction of $dg$, the estimate~(\ref{la1}) gives for any $(\alpha,\beta)\in B$, 
\begin{equation}\label{la2}
\Vert\partial_\alpha h_{\xi,\Lambda}(\alpha,\beta)\Vert\leq \gamma L^{-\tau} \Longrightarrow \Vert \partial_{\alpha,\alpha} h_{\xi,\Lambda}(\alpha,\beta).\eta\Vert>\gamma L^{-\tau}\Vert\eta\Vert 
\end{equation}  
for any $\eta\in\R^n\setminus\{0\}$. So let $\mathcal{C}_{\gamma,\tau,L,\Lambda}=\Pi_{\Lambda}^{-1}(\mathcal{C}_{\gamma,\tau,L})$, and define
\[ \mathcal{C}_{\gamma,\tau}=\bigcup_{L \in \N^*}\bigcup_{k\in\{1,\dots,n\}}\bigcup_{\Lambda \in G^L(n,k)}\mathcal{C}_{\gamma,\tau,L,\Lambda}. \]
As a consequence of the estimate~(\ref{la2}), the function $h_\xi \in SDM_{\gamma}^{\tau}(B)$ provided that $\xi \notin \mathcal{C}_{\gamma,\tau}$, hence $h_\xi \in SDM^{\tau}(B)$ provided that $\xi \notin \mathcal{C}_{\tau}$, where
\[ \mathcal{C}_{\tau}=\bigcap_{\gamma>0}\mathcal{C}_{\gamma,\tau}. \]

\bigskip

\textit{Second step.} It remains to prove that $\mathcal{C}_{\tau}$ has zero Lebesgue measure under our assumption that $\tau>2(n^2+1)$. For an integer $m\in\N^{*}$, we define $\mathcal{C}^{m}_{\gamma,\tau,L,\Lambda}$ (resp. $\mathcal{C}^{m}_{\gamma,\tau}$ and $\mathcal{C}^{m}_{\tau}$) as the intersection of $\mathcal{C}_{\gamma,\tau,L,\Lambda}$ (resp. $\mathcal{C}_{\gamma,\tau}$ and $\mathcal{C}_{\tau}$) with the ball of $\R^n$ of radius $m$ centered at the origin. As a consequence of~(\ref{vol1}) and Fubini-Tonelli theorem, one has
\begin{equation*}
\lambda_n(\mathcal{C}^{m}_{\gamma,\tau,L,\Lambda}) \leq V_{n,m} c_k \gamma^{\frac{1}{2}}L^{-\frac{\tau}{2}}
\end{equation*}
where $V_{n,m}=m^n \pi^{n/2}\Gamma(n/2+1)^{-1}$ is the volume of the ball of $\R^n$ of radius $m$ centered at the origin. Therefore
\begin{equation*}
\lambda_n\left(\bigcup_{\Lambda \in G^L(n,k)}\mathcal{C}^{m}_{\gamma,\tau,L,\Lambda}\right) \leq |G^L(n,k)|V_{n,m} c_k L^{-\frac{\tau}{2}} \gamma^{\frac{1}{2}},
\end{equation*}
with $|G^L(n,k)|$ the cardinal of $G^L(n,k)$. But obviously $|G^L(n,k)|\leq L^{n^2}$ and hence
\begin{equation*}
\lambda_n\left(\bigcup_{\Lambda \in G^L(n,k)}\mathcal{C}^{m}_{\gamma,\tau,L,\Lambda}\right) \leq V_{n,m} c_k L^{n^2-\frac{\tau}{2}} \gamma^{\frac{1}{2}}.
\end{equation*}
Now
\begin{equation*}
\lambda_n\left(\bigcup_{k\in\{1,\dots,n\}}\bigcup_{\Lambda \in G^L(n,k)}\mathcal{C}^{m}_{\gamma,\tau,L,\Lambda}\right) \leq V_{n,m} \left(\sum_{k=1}^{n}c_k\right)L^{n^2-\frac{\tau}{2}}\gamma^{\frac{1}{2}},
\end{equation*}
and so
\begin{equation*}
\lambda_n(\mathcal{C}^{m}_{\gamma,\tau})\leq V_{n,m} \left(\sum_{k=1}^{n}c_k \right)\left(\sum_{L=1}^{+\infty}L^{n^2-\frac{\tau}{2}}\right)\gamma^{\frac{1}{2}}
\end{equation*}
where the sum in the right-hand side of the last estimate is finite since we are assuming $\tau>2(n^2+1)$. This shows that
\[ \lambda_n(\mathcal{C}^{m}_{\tau})=\inf_{\gamma >0}\lambda_n(\mathcal{C}^{m}_{\gamma,\tau})=0, \] 
and as $\mathcal{C}_{\tau}=\bigcup_{m\geq 1}\mathcal{C}^{m}_{\tau}$ we finally obtain
\[ \lambda_n(\mathcal{C}_{\tau})=0, \]
and this concludes the proof.
\end{proof}

\paraga As we mentioned in the introduction, there is a notion of genericity in infinite dimensional vector spaces called prevalence, first introduced in a different setting by Christensen (\cite{Chr73}) and rediscovered by Hunt, Sauer and Yorke (\cite{HSY92}, see also \cite{OY05} and \cite{HK10}). 

\begin{definition}
Let $E$ be a completely metrizable topological vector space. A Borel subset $S \subseteq E$ is said to be shy if there exists a Borel measure $\mu$ on $E$, with $0<\mu(C)<\infty$ for some compact set $C\subseteq E$, and such that $\mu(x+S)=0$ for all $x\in E$. 

An arbitrary set is called shy if it is contained in a shy Borel subset, and finally the complement of a shy set is called prevalent.
\end{definition}  

The following ``genericity" properties are easy to check (\cite{OY05}, \cite{HK10}): a prevalent set is dense, a set containing a prevalent set is also prevalent, and prevalent sets are stable under translation and countable intersection. 

Furthermore, we have an easy but useful criterion for a set to be prevalent.

\begin{proposition}[\cite{HSY92}]\label{proppreva}
Let $P$ be a subset of $E$. Suppose there exists a finite-dimensional subspace $F$ of $E$ such that $x+P$ has full $\lambda_{F}$-measure for all $x\in E$. Then $P$ is prevalent.
\end{proposition}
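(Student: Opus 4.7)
The plan is to unwind the definition of prevalence and exhibit a concrete transverse measure constructed from Lebesgue measure on $F$. By definition, $P$ is prevalent iff the complement $E\setminus P$ is shy, so it suffices to produce a Borel measure $\mu$ on $E$, some compact set $C\subseteq E$ with $0<\mu(C)<\infty$, and verify that $\mu(y+(E\setminus P))=0$ for every $y\in E$.

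The natural choice of witness is dictated by the hypothesis: let $C\subseteq F$ be any compact set of positive $\lambda_F$-measure (for instance, the closed unit ball of $F$ with respect to any norm on the finite-dimensional space $F$), and let $\mu$ be the Borel measure on $E$ defined by $\mu(A):=\lambda_F(A\cap C)$ for Borel $A\subseteq E$. Then $\mu$ is a finite Borel measure, supported on the compact set $C\subseteq E$, and $\mu(C)=\lambda_F(C)\in(0,\infty)$.

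For the transverse nullity, fix $y\in E$ and note that, since $E$ is a vector space, $y+(E\setminus P)=E\setminus(y+P)$. Hence
$\mu(y+(E\setminus P))=\lambda_F\bigl(C\cap(E\setminus(y+P))\bigr)=\lambda_F\bigl(C\setminus(y+P)\bigr)\leq \lambda_F\bigl(F\setminus(y+P)\bigr),$
and the right-hand side equals $0$ by hypothesis, since $y+P$ has full $\lambda_F$-measure. Thus $\mu$ is transverse to $E\setminus P$, which shows $E\setminus P$ is shy and $P$ is prevalent. The only subtlety worth flagging is measurability: if $P$ is not Borel, one interprets the hypothesis via the Lebesgue completion of $\lambda_F$, or equivalently one shows that $E\setminus P$ is contained in a Borel shy set by taking a $\lambda_F$-measurable envelope of each slice $F\setminus(y+P)$; this is the only genuinely delicate point, but it is routine and does not affect the core argument.
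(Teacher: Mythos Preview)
Your proof is correct and is essentially the standard argument from \cite{HSY92}. Note, however, that the paper does not actually prove this proposition: it is stated as a cited result from Hunt--Sauer--Yorke and used as a black box in the proof of Corollary~\ref{corpreva}, so there is no proof in the paper to compare against. Your write-up supplies precisely the expected verification---restrict Lebesgue measure on $F$ to a compact set of positive measure and check transversality to $E\setminus P$---and your remark on the measurability issue when $P$ is not assumed Borel is an appropriate caveat.
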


\paraga Now we can prove our Corollary~\ref{corpreva}. 

\begin{proof}[Proof of Corollary~\ref{corpreva}]
Let $E=C^{2n+2}(B)$, $P=SDM^\tau(B)$ for $\tau>2(n^2+1)$ and $F$ the space of linear forms of $\R^n$ restricted to $B$. Then $F$ is a linear subspace of $C^{2n+2}(B)$ of dimension $n$, and the conclusion follows immediately from Theorem~\ref{thmpreva} and the above Proposition~\ref{proppreva}.
\end{proof}

\paraga To conclude, let us compare our generic condition with the usual steepness property. First, our condition is prevalent in the space $C^{k}(B)$, with $k\geq 2n+2$, and this is not true for steep functions. But more importantly, as prevalence is nothing but ``full Lebesgue measure" in finite dimension, given any non zero integers $m$ and $n$, Lebesgue almost all polynomial Hamiltonian $h_m$ of degree $m$ with $n$ degrees of freedom is $SDM$, but not steep unless $m$ is of order $n^2$. This remark turns out to be very useful when studying the stability of invariant tori under generic conditions (see \cite{Bou09}).

\bigskip

{\it Acknowledgements.}

The authors wish to thank Jacques F\'ejoz and Jean-Pierre Marco for useful discussions, and A.B. also thanks the ASD team at Observatoire de Paris for its hospitality, especially Alain Albouy and Alain Chenciner for their kind support. Both authors thank the CRM of Barcelona where this work was initiated during the semester ``Stability and Instability in Mechanical Systems". Finally, the authors wish to thank the referee for his careful reading.

\addcontentsline{toc}{section}{References}
\bibliographystyle{amsalpha}
\bibliography{NwsdFinal}
\end{document}